\newtheorem{theorem}{Theorem}[section]
\newtheorem{lemma}{Lemma}[section]
\newtheorem{corollary}[theorem]{Corollary}
\newtheorem{proposition}[theorem]{Proposition}
\newtheorem{remark}{Remark}
\begin{document}
\title{On a non-local area-preserving curve flow}

\author{
  Zezhen Sun
    \thanks{Corresponding author. The Department of General Education, Anhui Water Conservancy Technical College, Hefei 231603, China. E-mail address: \texttt{52205500017@stu.ecnu.edu.cn} }
  \and
  Yuting Wu
    \thanks{School of Mathematical Sciences, East China Normal University, Shanghai 200241, China. E-mail address: \texttt{52215500001@stu.ecnu.edu.cn}}
}

\date{\today}
\maketitle
\begin{abstract}
  \noindent
  In this paper, we study a new area-preserving curvature flow for closed convex planar curves. This flow will decrease the length of the evolving curve and make the curve more and more circular during the evolution process. And finally, the curve converges to a finite circle in $C^{\infty}$ sense as time goes to infinity.
  \\
  \\
  {\bf Keywords:}Closed convex plane curves, Non-local flow, Existence, Convergence
  \\
  {\bf Mathematics Subject Classification:} 35B40,35K15,35K55,
\end{abstract}

\section{Introduction}
The curvature flow of plane curves has received a lot of attention during the last few decades. The most famous one is the curve shortening flow in the plane studied by Gage \cite{Gage83,Gage84}, Gage-Hamilton \cite{Gage-Hamilton}, Grayson \cite{Grayson87,Grayson} and the others. Later, people started to study some non-local flows, such as area-preserving flows \cite{Gage85,Ma-Cheng,Mao-Pan-Wang,Sun,gpt2,Tsai-Wang} and the length-preserving flows \cite{Ma-zhu,Pan-Yang,gpt1,Tsai-Wang}, etc. In \cite{Lin-Tsai2012}, Lin-Tsai summarized the previous non-local flows as
\begin{equation}\label{fl}
  \left\{\begin{aligned}
    \frac{\partial X(u, t)}{\partial t}&=\big[f(\kappa(u,t))-\lambda(t)\big]N_{in}(u,t),\\
    X(u, 0)&=X_{0}(u),\quad u\in S^{1},
  \end{aligned}
  \right.
  \end{equation}
where $X_{0}(u)\subset\mathbb{R}^{2}$ is a given smooth closed curve, parameterized by $u\in S^{1}$, and $X(u, t):S^{1}\times[0,T)\to\mathbb{R}^{2}$ is a family of curves moving along its inward normal direction $
N_{in}(u,t)$ with given speed function $f(\kappa(u,t))-\lambda(t)$. $f(\kappa)$ is a function
of the curvature $\kappa(u,t)$ and $\lambda(t)$ is a function of time which depends on certain global (non-local) quantities, say the length $L(t)$, enclosed area $A(t)$, or other possible global quantities like the integral of curvatue over the entire curve.

 The aim of this paper is to study a new non-local flow for convex closed plane curves which is not included in the aforementioned types:
\begin{equation}\label{fl}
  \left\{\begin{aligned}
    \frac{\partial X(u, t)}{\partial t}&=\big(\kappa-\frac{\lambda(t)}{\kappa}\big)N(u,t),\\
    X(u, 0)&=X_{0}(u),\quad u\in S^{1},
  \end{aligned}
  \right.
  \end{equation}
 where $\kappa$ is the curvature at the point $X(u, t)$ and the nonlocal term $\lambda(t)=\frac{2\pi}{\oint\frac{1}{\kappa}ds}$ is used to keep enclosed area $A$ a constant. Our main result is given as follows.

\begin{theorem}\label{them}
A closed convex plane curve which evolves according to \eqref{fl} remains convex, decreases its length and preserves the enclosed area during the evolution process, and finally converges to a finite circle with
radius $\sqrt{\frac{A(0)}{\pi}}$ in $C^{\infty}$ sense as $t\to\infty$.
\end{theorem}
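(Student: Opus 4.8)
The plan follows the classical route for nonlocal curvature flows of convex planar curves (Gage, Gage--Hamilton, Tsai--Wang, Lin--Tsai). Since all the assertions live inside the class of convex curves, I would first reparametrise by the tangent angle $\theta\in S^{1}$ and encode the curve by its support function $h(\theta,t)$, so that $\rho:=\kappa^{-1}=h+h_{\theta\theta}>0$ is the radius of curvature, $L=\oint\rho\,d\theta$, $A=\frac12\oint h\rho\,d\theta$ and $ds=\rho\,d\theta$. In these variables \eqref{fl} becomes $\partial_{t}h=\lambda/\kappa-\kappa=\lambda(h+h_{\theta\theta})-(h+h_{\theta\theta})^{-1}$, which is quasilinear parabolic (its linearisation carries the coefficient $\lambda+\kappa^{-2}>0$ in front of $h_{\theta\theta}$), and, computing the evolution of $\kappa=(h+h_{\theta\theta})^{-1}$, the curvature equation
\[
 \frac{\partial\kappa}{\partial t}=(\kappa^{2}+\lambda)\kappa_{\theta\theta}-\frac{2\lambda\kappa_{\theta}^{2}}{\kappa}+\kappa^{3}-\lambda\kappa,
\]
which is uniformly parabolic as long as $\kappa$ is pinched between two positive constants. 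Standard parabolic theory then provides a unique smooth solution on a maximal interval $[0,T_{\max})$ that stays strictly convex for at least a short time.

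I would next dispose of the three ``soft'' conclusions. Area preservation is built in: $\frac{dA}{dt}=\oint\rho\,\partial_{t}h\,d\theta=\lambda\oint\rho^{2}\,d\theta-\oint d\theta=\lambda\oint\kappa^{-1}\,ds-2\pi=0$ by the definition of $\lambda$. For the length, $\frac{dL}{dt}=\oint\partial_{t}h\,d\theta=\lambda L-\oint\kappa\,d\theta=\lambda L-\oint\kappa^{2}\,ds$, and the elementary inequality $\frac{a^{2}}{b}+\frac{b^{2}}{a}\ge a+b$ ($a,b>0$), applied to $a=\rho(\theta_{1})$, $b=\rho(\theta_{2})$ and integrated over $S^{1}\times S^{1}$, gives $\oint\kappa\,d\theta\cdot\oint\kappa^{-2}\,d\theta\ge 2\pi\oint\kappa^{-1}\,d\theta$, i.e.\ $\lambda L\le\oint\kappa^{2}\,ds$, so that $\frac{dL}{dt}\le 0$ with equality only when $\kappa$ is constant. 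Convexity is preserved by a maximum-principle argument: Cauchy--Schwarz gives $\oint\kappa^{-1}\,ds\ge L^{2}/(2\pi)$, hence the uniform bound $\lambda\le 4\pi^{2}/L^{2}\le\pi/A(0)$ (using the isoperimetric inequality and $L(t)\le L(0)$), while $\lambda\ge\kappa_{\min}^{2}$; evaluating the curvature equation at a spatial minimum of $\kappa$ then yields $\frac{d}{dt}\kappa_{\min}\ge\kappa_{\min}(\kappa_{\min}^{2}-\lambda)\ge-\tfrac{\pi}{A(0)}\kappa_{\min}$, whence $\kappa_{\min}(t)\ge\kappa_{\min}(0)\,e^{-\pi t/A(0)}>0$ on $[0,T_{\max})$.

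The essential difficulty is to show $T_{\max}=\infty$, i.e.\ to obtain an \emph{a priori} upper bound on $\kappa$ on every finite time interval (the lower bound being already in hand); this is the main obstacle. The maximum principle alone is insufficient, since at a spatial maximum of $\kappa$ it only gives $\frac{d}{dt}\kappa_{\max}\le\kappa_{\max}^{3}$, which does not exclude finite-time blow-up, and -- the flow being nonlocal -- there is no avoidance principle against circles to fall back on. Instead I would carry out the usual integral-estimate scheme: differentiate $\oint\kappa^{2}\,d\theta$ and then $\oint\kappa_{\theta}^{2}\,d\theta$ along the flow, integrate by parts, and absorb the dangerous terms (of the type $\oint\kappa^{4}\,d\theta$, $\oint\kappa_{\theta\theta}^{2}\,d\theta$) into the dissipative terms using the Gagliardo--Nirenberg and Wirtinger inequalities on $S^{1}$ together with the bounds on $L$, on $\lambda$ (which satisfies $\kappa_{\min}(0)^{2}e^{-2\pi t/A(0)}\le\lambda\le\pi/A(0)$) and on $\kappa_{\min}$; this keeps $\oint\kappa_{\theta}^{2}\,d\theta$, hence $\|\kappa\|_{L^{\infty}}$, finite on $[0,T)$ for every $T<\infty$. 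Uniform parabolicity on $[0,T)$ then gives, via Schauder estimates and bootstrapping, bounds on all $\theta$-derivatives, so the flow extends beyond $T$ and $T_{\max}=\infty$. The precise balancing of the quartic curvature terms against the dissipation is the delicate computation in this step.

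Finally, for the asymptotics I would track the isoperimetric deficit $D(t):=L(t)^{2}-4\pi A(0)\ge 0$. By the length computation $\frac{dD}{dt}=2LL'=-2L\big(\oint\kappa^{2}\,ds-\lambda L\big)\le 0$, so $D(t)\downarrow D_{\infty}\ge 0$ and $\int_{0}^{\infty}\big(\oint\kappa^{2}\,ds-\lambda L\big)\,dt<\infty$; hence $\oint\kappa^{2}\,ds-\lambda L\to 0$ along some $t_{n}\to\infty$. By the quantitative form of the inequality above, $\oint\kappa^{2}\,ds-\lambda L$ equals, up to the factor $\lambda/4\pi$, the average of $\big(\rho(\theta_{1})-\rho(\theta_{2})\big)^{2}\big(\rho(\theta_{1})+\rho(\theta_{2})\big)/\big(\rho(\theta_{1})\rho(\theta_{2})\big)$ over $S^{1}\times S^{1}$, so it controls the $L^{2}$-deviation of $\rho(\cdot,t)$ from a constant once $\kappa$ is pinched. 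Together with uniform two-sided curvature bounds -- obtained by feeding the decay of $D$ back into the parabolic estimates -- this forces $X(\cdot,t_{n})$ to converge to a circle, whence $D_{\infty}=0$. A Poincar\'e/Bonnesen-type inequality $D\le C\big(\oint\kappa^{2}\,ds-\lambda L\big)$ then upgrades this to exponential decay $D(t)\le Ce^{-\delta t}$, and interpolating with the uniform bounds on all $\theta$-derivatives of $h$ yields $C^{\infty}$ convergence of $X(\cdot,t)$ to a round circle as $t\to\infty$ (the centre converges because $|\partial_{t}X|=|\kappa-\lambda/\kappa|$ decays exponentially in $C^{0}$). Since the area stays equal to $A(0)$, the limiting circle has radius $\sqrt{A(0)/\pi}$.
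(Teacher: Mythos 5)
Your treatment of the ``soft'' parts is a valid (and in places cleaner) alternative to the paper's. In particular, your proof that $\frac{dL}{dt}\le 0$ via the symmetrized inequality $a^3+b^3\ge a^2b+ab^2$ integrated over $S^1\times S^1$ (giving $\lambda L\le\oint\kappa^2\,ds$ directly) is more elementary than the paper's route, which invokes both Gage's inequality $\oint\kappa^2\,ds\ge\pi L/A$ and the Pan--Yang inequality $\oint\kappa^{-1}\,ds\ge(L^2-2\pi A)/\pi$. Likewise your Cauchy--Schwarz bound $\lambda\le 4\pi^2/L^2\le\pi/A(0)$ is simpler than the paper's Pan--Yang estimate and gives the same constant, and your maximum-principle lower bound $\kappa_{\min}(t)\ge\kappa_{\min}(0)e^{-\pi t/A(0)}$ is essentially the paper's Lemma on preserved convexity (the paper runs it through the auxiliary function $U=\kappa e^{\mu t}$ with $\mu=\pi/A(0)+1$, but the content is the same). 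The convergence step (isoperimetric deficit decays, Bonnesen controls $r_{out}-r_{in}$, then bootstrap to $C^\infty$) also matches the paper's Section~2 and Section~5.

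The gap is in the long-time existence step, which you rightly flag as the main obstacle but for which your proposed mechanism does not actually close. You propose to differentiate $\oint\kappa^2\,d\theta$ and $\oint\kappa_\theta^2\,d\theta$ and absorb the bad terms (of the type $\oint\kappa^4\,d\theta$) into the dissipation via Gagliardo--Nirenberg. But when one computes $\frac{d}{dt}\oint\kappa^2\,d\theta = -6\oint\kappa^2\kappa_\theta^2\,d\theta - 6\lambda\oint\kappa_\theta^2\,d\theta + 2\oint\kappa^4\,d\theta - 2\lambda\oint\kappa^2\,d\theta$, the interpolation inequality that controls $\oint\kappa^4\,d\theta$ against $\oint\kappa^2\kappa_\theta^2\,d\theta$ leaves over a term of order $\bigl(\oint\kappa^2\,d\theta\bigr)^2$, so one is left with a Riccati-type inequality that bounds $\oint\kappa^2\,d\theta$ only on a time interval proportional to $\bigl(\oint\kappa_0^2\,d\theta\bigr)^{-1}$. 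This is exactly what the paper's Lemma~3.3 proves, and the paper uses it \emph{only} for short-time existence; it cannot give $T_{\max}=\infty$ because the time of validity depends on the initial data and cannot be iterated indefinitely without a uniform-in-time bound on $\oint\kappa^2\,d\theta$, which is precisely what is not yet known. What the paper does instead --- and what is missing from your plan --- is the Gage--Hamilton median-curvature machinery: the median curvature $\kappa^\ast(t)$ satisfies the purely geometric bound $\kappa^\ast<L/A\le L(0)/A(0)$ (Prop.~4.3.2 of Gage--Hamilton), which feeds into an a priori bound on $\oint\ln\kappa\,d\theta$ via Wirtinger's inequality on the sublevel/superlevel intervals of $\kappa^\ast$, and from there a pointwise bound on $\kappa$ follows by the small-interval argument combined with the $L^2$-control of $\bigl(\kappa-\lambda/\kappa\bigr)_\theta$. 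This chain uses the convexity and the uniform bounds $A\equiv A(0)$, $\sqrt{4\pi A(0)}\le L\le L(0)$ in an essential geometric way, rather than trying to win a pure parabolic energy fight against the cubic reaction term. Without some such global geometric input, your sketched energy scheme will not rule out finite-time blow-up.

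Two smaller remarks: you state that the support-function equation is ``quasilinear parabolic,'' but because $\lambda(t)$ depends on $\kappa(\cdot,t)$ through an integral, the system is nonlocal, and the paper accordingly proves short-time existence via the Leray--Schauder fixed-point argument (following Pan) rather than by directly quoting standard parabolic theory; this is a technical but real point. And your claimed $C^2$-level statement that the decay of the defect ``forces'' two-sided curvature pinching needs the long-time curvature bounds to already be in hand --- the paper gets $\kappa_{\min}/\kappa_{\max}\to 1$ by mimicking Gage--Hamilton Secs.~5.1--5.6, not directly from the deficit decay.
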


We organize the sections as follows. In Section 2, it is proved that the strictly convexity of the flow is preserved. It is also shown that the evolving curve converges to a circle
 in the Hausdorff metric under the assumption that the flow
 exists globally.  In Section 3,  it is proved that the evolution problem
 is equivalent to an initial value
 problem of a certain nonlinear differential equation system. Then by
 the Leray–Schauder fixed point principle, we obtain that this initial
 value problem is locally solvable.  In Section 4, we follow the standard ideas of curvature
 flow theory to prove the long-time existence of the evolving curve.  In Section 5, we show that the evolving curve converges to a circle in the $C^{\infty}$ metric.

\section{Final Shape of the Evolving Curve}
Let $X(u,t)=\big(x(u,t),y(u,t)\big):[a,b]\times[0,T)\to\mathbb{R}^{2}$ be a family of closed plane curves with initial curve $X(u, 0)=X_{0}(u):[a,b]\to\mathbb{R}^{2}$ being a convex closed curve which evolves under \eqref{fl}.

Let $g(u,t)=\left|X_{u}\right|=(x^{2}_{u}+y^{2}_{u})^{\frac{1}{2}}$ denote the metric along the curve, then the differential of arc-length is $ds = g(u,t)du$. The tangent vector $T$, normal vector $N$, direction angle $\theta$, curvature $\kappa$, length $L$ and area $A$ can be defined as the following equations:
  \begin{align*}
    T&=\frac{\partial X}{\partial s} = \frac{1}{g}\frac{\partial X}{\partial u}, \quad N = \frac{1}{\kappa}\frac{\partial T}{\partial s} = \frac{1}{\kappa g}\frac{\partial T}{\partial u},\qquad\,\theta = \angle (T,x),\quad
    \kappa =\frac{\partial \theta}{\partial s}= \frac{1}{g} \frac{\partial \theta}{\partial u},\\
      A(t)&=\frac{1}{2}\oint\,xdy-ydx=-\frac{1}{2}\oint \langle X,N\rangle ds, \quad
   L(t)=\int^{b}_{a} g(u,t)du = \oint ds.
  \end{align*}
  Since changing the tangential components of the velocity vector $X_{t}$ affects only the parametrization, not the geometric shape of the evolving curve (see \cite{Chou-Zhu,Gage-Hamilton,Grayson87}), one may choose a proper tangential component $\eta=-\frac{\partial}{\partial\theta}\big(\kappa-\frac{1}{\kappa}\frac{2\pi}{\oint\frac{1}{\kappa}ds}\big)$ which makes $\theta$ independent of time $t$, such that the geometric analysis of the evolving curve can be simplified, i.e., we may consider the following evolution problem which is equivalent to flow \eqref{fl}:
  \begin{equation}\label{fl2}
  \left\{\begin{aligned}
    \frac{\partial X(u, t)}{\partial t}&=\eta\,T+\big(\kappa-\frac{1}{\kappa}\frac{2\pi}{\oint\frac{1}{\kappa}ds}\big)N,\\
    X(u, 0)&=X_{0}(u).
  \end{aligned}
  \right.
  \end{equation}

\begin{lemma}\label{llaa}
(The monotonicity of area and length)Under the flow \eqref{fl2}, we have
\begin{equation*}
\frac{dA}{dt}=0,
\quad\frac{dL}{dt}\le0.
\end{equation*}
\end{lemma}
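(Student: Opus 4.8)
The plan is to apply the standard first-variation formulas for the length $L$ and the enclosed area $A$ of an evolving curve to the flow \eqref{fl2}, whose normal speed is $\beta:=\kappa-\frac{\lambda(t)}{\kappa}$ with $\lambda(t)=\frac{2\pi}{\oint\frac1\kappa\,ds}$. Since the tangential term $\eta\,T$ only reparametrizes the curve and affects neither $L$ nor $A$, it may be discarded from the computation. First I would recall that under $\partial_t X=\alpha T+\beta N$, with $N$ the inward unit normal so that $\partial_s T=\kappa N$, the arc-length element evolves by $\partial_t(ds)=(\partial_s\alpha-\kappa\beta)\,ds$; integrating over the closed curve and using $\oint\partial_s\alpha\,ds=0$ gives $\frac{dL}{dt}=-\oint\kappa\beta\,ds$, and differentiating $A=-\tfrac12\oint\langle X,N\rangle\,ds$ gives the companion identity $\frac{dA}{dt}=-\oint\beta\,ds$.

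For the area, I would substitute $\beta=\kappa-\frac{\lambda}{\kappa}$ and use the total-curvature identity $\oint\kappa\,ds=2\pi$ valid for any closed convex curve:
\[
\frac{dA}{dt}=-\oint\Big(\kappa-\frac{\lambda}{\kappa}\Big)\,ds=-2\pi+\lambda\oint\frac{1}{\kappa}\,ds=-2\pi+\frac{2\pi}{\oint\frac{1}{\kappa}\,ds}\,\oint\frac{1}{\kappa}\,ds=0 ,
\]
which is precisely the cancellation for which the nonlocal coefficient $\lambda(t)$ was chosen.

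For the length, the same substitution gives
\[
\frac{dL}{dt}=-\oint\kappa^{2}\,ds+\lambda\oint ds=-\oint\kappa^{2}\,ds+\frac{2\pi L}{\oint\frac{1}{\kappa}\,ds},
\]
so the whole lemma reduces to the purely geometric inequality $2\pi L\le\big(\oint\kappa^{2}\,ds\big)\big(\oint\frac{1}{\kappa}\,ds\big)$. I expect this to be the only real obstacle, and I would prove it by two applications of the Cauchy--Schwarz inequality: from $2\pi=\oint\kappa\cdot 1\,ds$ one obtains $(2\pi)^{2}\le L\,\oint\kappa^{2}\,ds$, and from $L=\oint\kappa^{1/2}\cdot\kappa^{-1/2}\,ds$ one obtains $L^{2}\le 2\pi\,\oint\frac{1}{\kappa}\,ds$; multiplying these and cancelling $2\pi L>0$ yields the claim. (Equivalently, Chebyshev's integral inequality applied to the oppositely ordered integrands $\kappa^{2}$ and $\kappa^{-1}$ gives it in one line.) It follows that $\frac{dL}{dt}\le 0$, with equality exactly when $\kappa$ is constant, i.e.\ when the curve is a circle — consistent with the convergence asserted in Theorem~\ref{them}. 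Apart from this inequality everything is classical: the first-variation formulas are standard curve-flow identities and the remaining steps are bookkeeping.
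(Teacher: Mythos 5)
Your proposal is correct, and for $\frac{dA}{dt}=0$ and the variation formula for $L$ it coincides with the paper's computation. Where you diverge is in proving $\frac{dL}{dt}\le0$: the paper invokes two named curvature inequalities, Gage's inequality $\oint\kappa^2\,ds\ge\frac{\pi L}{A}$ and the Pan--Yang inequality $\oint\frac{1}{\kappa}\,ds\ge\frac{L^2-2\pi A}{\pi}$, and then closes with the isoperimetric inequality $L^2\ge4\pi A$, arriving at the quantitative bound $\frac{dL}{dt}\le\frac{\pi L(4\pi A-L^2)}{(L^2-2\pi A)A}\le0$. You instead reduce the claim to the area-free inequality $2\pi L\le\bigl(\oint\kappa^2\,ds\bigr)\bigl(\oint\frac{1}{\kappa}\,ds\bigr)$ and establish it by two applications of Cauchy--Schwarz (or, in one line, Chebyshev's integral inequality for the oppositely ordered pair $\kappa^2$ and $\kappa^{-1}$). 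Both routes are valid; yours is more elementary and self-contained since it needs neither Gage's nor the Pan--Yang inequality, while the paper's version has the advantage of producing the explicit area-dependent estimate on $\frac{dL}{dt}$ that feeds into the exponential decay of $L^2-4\pi A$ in Theorem~\ref{hasc} (where, incidentally, the authors also use the Cauchy--Schwarz bound $\oint\frac{1}{\kappa}\,ds\ge\frac{L^2}{2\pi}$ that forms half of your argument). For the lemma as stated, your shorter proof is fully adequate, and your identification of the equality case ($\kappa$ constant) is correct.
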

\begin{proof}
Gage’s variation formulae of length $L(t)$ and area $A(t)$ in \cite{Gage85} give us
\begin{align}
\frac{dA}{dt}&=-\oint\big(\kappa-\frac{1}{\kappa}\frac{2\pi}{\oint\frac{1}{\kappa}ds}\big)ds=
-\oint\kappa ds+\oint\frac{1}{\kappa}ds\frac{2\pi}{\oint\frac{1}{\kappa}ds}=0,\\
\label{lt}\frac{dL}{dt}&=-\oint\big(\kappa-\frac{1}{\kappa}\frac{2\pi}{\oint\frac{1}{\kappa}ds}\big)\kappa ds=
-\oint\kappa^{2} ds+\frac{2\pi L}{\oint\frac{1}{\kappa}ds}.
\end{align}
To see $\frac{dL}{dt}\le0$, we need the following two inequalities (see \cite{Gage83,Pan-Yang})
\begin{align}
\label{gi}\oint\kappa^{2}ds&\ge\frac{\pi L}{A}\quad\text{(Gage's inequality)},\\
\label{pyi}\oint\frac{1}{\kappa}ds&\ge\frac{L^{2}-2\pi A}{\pi}\quad\text{(Pan-Yang inequality)}.
\end{align}
So one obtains that
$$\frac{dL}{dt}\le-\frac{\pi L}{A}+\frac{2\pi^{2}L}{L^{2}-2\pi A}=\frac{\pi L(4\pi A-L^{2})}{(L^{2}-2\pi A)A}\le0,$$
where we have used the classical isoperimetric inequality $L^{2}-4\pi A\ge0$. The proof is done.
\end{proof}

\begin{remark}\label{lajie}
By the above Lemma and  $L^{2}-4\pi A\ge0$, we have
\begin{equation}\label{la}
 A(t)\equiv A(0)\quad and \quad \sqrt{4\pi A(0)}\le L(t)\le L(0),
\end{equation}
and the isoperimetric ratio $\frac{L^{2}(t)}{4\pi A(t)}$ is decreasing during the evolution process unless the initial curve $X_{0}$ is a circle (which is an equilibrium solution of the flow \eqref{fl2}). Moreover, from \eqref{pyi} it follows that
\begin{equation}
0<\lambda(t)\le\frac{2\pi^{2}}{L^{2}-2\pi A}\le\frac{2\pi^{2}}{4\pi A-2\pi A}=\frac{\pi}{A}=\frac{\pi}{A(0)}.
\end{equation}
\end{remark}

In the following, we will employ the maximum principle to show that if the initial curvature is positive, then the curvature of the evolving curve remains positive for all time.

\begin{lemma}(Preserved convexity)
A closed strictly convex plane curve evolving according to \eqref{fl2} remains so during the evolution process.
\end{lemma}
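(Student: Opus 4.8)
The goal is to show that strict convexity, i.e.\ $\kappa>0$, is preserved along \eqref{fl2}; I would establish this by a maximum-principle argument run on a continuation/first-time basis. On the maximal time interval on which the evolving curve stays strictly convex, the direction angle $\theta$ is a legitimate spatial coordinate — this is exactly the purpose of the tangential term $\eta$, which is chosen so that $\partial_t\theta\equiv0$ — and the curvature $\kappa=\kappa(\theta,t)$ satisfies the well-known evolution law $\partial_t\kappa=\kappa^{2}\bigl(\partial_\theta^{2}F+F\bigr)$ for a flow with inward normal speed $F$ (see \cite{Chou-Zhu,Gage-Hamilton}). Here $F=\kappa-\frac{\lambda(t)}{\kappa}$, and inserting this (using $\bigl(\frac{\lambda}{\kappa}\bigr)_{\theta\theta}=-\frac{\lambda\kappa_{\theta\theta}}{\kappa^{2}}+\frac{2\lambda\kappa_\theta^{2}}{\kappa^{3}}$) gives the quasilinear equation
\begin{equation*}
\frac{\partial\kappa}{\partial t}=\bigl(\kappa^{2}+\lambda\bigr)\frac{\partial^{2}\kappa}{\partial\theta^{2}}-\frac{2\lambda}{\kappa}\Bigl(\frac{\partial\kappa}{\partial\theta}\Bigr)^{2}+\kappa^{3}-\lambda\kappa .
\end{equation*}
Since $\lambda(t)>0$ (Remark~\ref{lajie}), the leading coefficient $\kappa^{2}+\lambda$ is strictly positive wherever $\kappa>0$, so on compact time subintervals the equation is uniformly parabolic for as long as strict convexity holds.

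Next I would control $\kappa_{\min}(t)=\min_\theta\kappa(\cdot,t)$. At a point realizing this spatial minimum one has $\kappa_\theta=0$ and $\kappa_{\theta\theta}\ge0$, so the $\kappa_\theta^{2}$-term vanishes and the $\kappa_{\theta\theta}$-term is nonnegative; discarding both leaves $\partial_t\kappa\ge\kappa^{3}-\lambda\kappa\ge-\lambda\kappa\ge-\frac{\pi}{A(0)}\,\kappa$, where the last inequality uses the bound $\lambda(t)\le\pi/A(0)$ from Remark~\ref{lajie} together with $\kappa\ge0$ (valid while convexity persists). Hence $\kappa_{\min}$ satisfies, in the ODE-comparison sense, $\frac{d}{dt}\kappa_{\min}\ge-\frac{\pi}{A(0)}\kappa_{\min}$, so that $\kappa_{\min}(t)\ge\kappa_{\min}(0)\,e^{-\pi t/A(0)}>0$ on the maximal interval of strict convexity; a standard continuation argument then extends that interval to all of $[0,T)$, which is the assertion of the lemma.

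To turn the comparison into a rigorous statement without differentiating the merely Lipschitz function $\kappa_{\min}$, I would compare $\kappa$ directly with the barrier $\phi_\varepsilon(t)=(\kappa_{\min}(0)-\varepsilon)\,e^{-(\pi/A(0)+\varepsilon)t}$: if $\kappa-\phi_\varepsilon$ were to vanish for the first time at some $(\theta_0,t_0)$, then at that point $\kappa_\theta=0$, $\kappa_{\theta\theta}\ge0$ and $\partial_t(\kappa-\phi_\varepsilon)\le0$, while the evolution equation forces $\partial_t\kappa\ge-\frac{\pi}{A(0)}\kappa=-\frac{\pi}{A(0)}\phi_\varepsilon>\phi_\varepsilon'$, a contradiction; letting $\varepsilon\downarrow0$ gives the claimed bound. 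I expect the only substantive work to be the careful derivation and justification of the curvature evolution equation under the angle-preserving reparametrization (and the bookkeeping that the whole argument lives on the maximal strict-convexity interval, where both the $\theta$-equation and the bound on $\lambda$ are available); the positivity estimate itself is then immediate. If one prefers not to invoke the explicit bound on $\lambda$, it suffices to note that $\lambda$ is continuous and hence bounded by some constant $C$ on each $[0,T']\subset[0,T)$, which already gives $\kappa_{\min}(t)\ge\kappa_{\min}(0)e^{-Ct}>0$.
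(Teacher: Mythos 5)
Your proof is correct and essentially identical in approach to the paper's: both derive the evolution equation for $\kappa$ in the angle parametrization, run a first-time maximum-principle argument at the spatial minimum, and use the a priori bound $\lambda(t)\le\pi/A(0)$ from Remark~\ref{lajie} to conclude an exponential lower bound $\kappa_{\min}(t)\ge\kappa_{\min}(0)e^{-ct}>0$. The paper packages the argument with the auxiliary function $U=\kappa e^{\mu t}$, choosing $\mu=\pi/A(0)+1$ to make the zeroth-order coefficient strictly positive, whereas you compare $\kappa$ directly against the $\varepsilon$-perturbed barrier $\phi_\varepsilon(t)$ and let $\varepsilon\downarrow0$; these are interchangeable bookkeeping devices for the same contradiction.
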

\begin{proof}
Just as Gage, Hamilton and others have done, we can derive the evolution equation of the curvature $\kappa$ as follows:
\begin{equation}\label{kt}
\frac{\partial\kappa}{\partial t}=\kappa^{2}\bigg[\big(\kappa-\frac{\lambda(t)}{\kappa}\big)_{\theta\theta}+\kappa-\frac{\lambda(t)}{\kappa}\bigg]
=\big(\kappa^{2}+\lambda(t)\big)\kappa_{\theta\theta}-\frac{2\lambda(t)}{\kappa}\kappa^{2}_{\theta}+\kappa^{3}-\lambda(t)\kappa.
\end{equation}
Now, let $U(\theta,t)=\kappa(\theta,t)e^{\mu t}$, where $\mu$ is a constant to be chosen later on. Then $U$ satisfies the following equation:
\begin{equation}\label{ut}
U_{t}=\big(\kappa^{2}+\lambda(t)\big)U_{\theta\theta}-\frac{2\lambda(t)}{U}U^{2}_{\theta}+(\kappa^{2}-\lambda(t)+\mu)U.
\end{equation}
 Let
$$U_{\min}(t)=\inf\{U(\theta,t)|0\le\theta\le2\pi\},$$
and suppose that there exists a $\xi$, satisfying $0<\xi<U_{\min}(0)=\kappa_{\min}(0),$ such that $U_{\min}(t)=\xi$ for some time $t>0$. Let $t_{0}=\inf\{t|U_{\min}(t)=\xi\}>0$, then,
the continuity of $U$ assures that this minimum $\xi$ is achieved for the first time at $(\theta_{0},t_{0})$ for some $\theta_{0}\in S^{1}$. Choose $\mu=\frac{\pi}{A(0)}+1$, then $\kappa^{2}-\lambda(t)+\mu>0$ at $(\theta_{0},t_{0})$, and at this point,
$$U_{t}\le0,\quad U_{\theta\theta}\ge0,\quad U_{\theta}=0\quad \text{and}\quad U=\xi>0,$$
which contradicts with the fact that $U$ satisfies \eqref{ut}. Hence we obtain that $U_{\min}(t)\ge U_{\min}(0)$ for all $t>0$. As a result,
\begin{equation}\label{kxj}
\kappa(\theta,t)\ge\kappa_{\min}(t)=U_{\min}(t)e^{-\mu t}\ge\kappa_{\min}(0)e^{-\mu t}>0.
\end{equation}
This completes the proof.
\end{proof}

We now consider the growth of the nonlocal term $\lambda(t)$.
\begin{lemma}
Under the flow \eqref{fl2}, we have
\begin{equation}
\frac{d\lambda}{dt}\ge0.
\end{equation}
\end{lemma}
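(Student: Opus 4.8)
\emph{Reduction.} The plan is to trade the statement about $\lambda(t)$ for a statement about its reciprocal denominator. Write $I(t):=\oint\frac{1}{\kappa}\,ds$, so that $\lambda(t)=2\pi/I(t)$ and hence $\frac{d\lambda}{dt}=-\frac{2\pi}{I(t)^{2}}\,\frac{dI}{dt}$. Since $\lambda(t)>0$ by Remark~\ref{lajie} (equivalently, $0<I(t)<\infty$, using the preserved strict convexity), the inequality $\frac{d\lambda}{dt}\ge 0$ is equivalent to $\frac{dI}{dt}\le 0$. So everything reduces to showing that $\oint\frac{1}{\kappa}\,ds$ is non-increasing along the flow.

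\emph{Computing $\frac{dI}{dt}$.} I would pass to the angle parametrization, which is legitimate because $\theta$ is time-independent under \eqref{fl2} and, by preserved strict convexity, $\theta\in[0,2\pi]$ is a genuine coordinate on the curve. In this parametrization $ds=\frac{1}{\kappa}\,d\theta$, so $I(t)=\int_{0}^{2\pi}\kappa^{-2}\,d\theta$. Introducing the radius of curvature $\rho:=1/\kappa$ and the speed $\psi:=\kappa-\lambda/\kappa=\rho^{-1}-\lambda\rho$, equation \eqref{kt} gives $\rho_{t}=-(\psi_{\theta\theta}+\psi)$, whence $\frac{dI}{dt}=\int_{0}^{2\pi}2\rho\,\rho_{t}\,d\theta=-2\int_{0}^{2\pi}\rho\,(\psi_{\theta\theta}+\psi)\,d\theta$. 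Integrating by parts twice (all boundary terms vanish by $2\pi$-periodicity) moves the derivatives onto $\rho$, giving $\frac{dI}{dt}=-2\int_{0}^{2\pi}(\rho_{\theta\theta}+\rho)\,\psi\,d\theta$; then substituting $\psi=\rho^{-1}-\lambda\rho$ and integrating by parts once more in each resulting term produces $\frac{dI}{dt}=-2\int_{0}^{2\pi}\frac{\rho_{\theta}^{2}}{\rho^{2}}\,d\theta-4\pi+2\lambda\int_{0}^{2\pi}\rho^{2}\,d\theta-2\lambda\int_{0}^{2\pi}\rho_{\theta}^{2}\,d\theta$.

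\emph{Conclusion.} The point of the computation is the cancellation of the two non-sign-definite terms: by the very definition of $\lambda$ one has $2\lambda\int_{0}^{2\pi}\rho^{2}\,d\theta=2\lambda\oint\frac{1}{\kappa}\,ds=4\pi$, so the $+4\pi$ kills the $-4\pi$ and we are left with $\frac{dI}{dt}=-2\int_{0}^{2\pi}\frac{\rho_{\theta}^{2}}{\rho^{2}}\,d\theta-2\lambda\int_{0}^{2\pi}\rho_{\theta}^{2}\,d\theta\le 0$, using $\rho>0$ and $\lambda>0$. Therefore $\frac{d\lambda}{dt}=-\frac{2\pi}{I^{2}}\frac{dI}{dt}\ge 0$, with equality precisely when $\rho_{\theta}\equiv 0$, i.e. when the curve is a circle. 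I do not expect a genuine obstacle here; the only thing to watch is bookkeeping in the repeated integration by parts and keeping the sign conventions of \eqref{kt} straight — the essential structural input being that the normalization $\lambda\oint\frac{1}{\kappa}\,ds\equiv 2\pi$ is exactly what cancels the leftover constant term.
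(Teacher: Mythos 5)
Your proof is correct and follows essentially the same route as the paper: differentiate $\lambda=2\pi/I$, substitute the evolution equation, integrate by parts, and observe that the non-sign-definite leftover cancels because $\lambda\oint\kappa^{-1}\,ds=2\pi$ (which is exactly the statement that $\tfrac{dA}{dt}=0$, the form in which the paper invokes it). The only difference is cosmetic: you work in $\rho=1/\kappa$ and perform a few more integrations by parts than strictly necessary, whereas the paper integrates by parts once on $\int\kappa^{-1}\psi_{\theta\theta}\,d\theta$ and recognizes $\int\kappa^{-1}\psi\,d\theta=-A_t=0$ directly; both land on the same final expression $\frac{\lambda^2}{\pi}\int_0^{2\pi}\frac{\kappa_\theta^2}{\kappa^2}\bigl(1+\frac{\lambda}{\kappa^2}\bigr)\,d\theta\ge 0$.
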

\begin{proof}
Do the computation.
\begin{align*}
\frac{d\lambda}{dt}&=\bigg(\frac{2\pi}{\int^{2\pi}_{0}\frac{1}{\kappa^{2}}d\theta}\bigg)_{t}
=-\frac{2\pi\big(\int^{2\pi}_{0}\frac{1}{\kappa^{2}}d\theta\big)_{t}}{\big(\int^{2\pi}_{0}\frac{1}{\kappa^{2}}d\theta\big)^{2}}
=\frac{4\pi}{\big(\int^{2\pi}_{0}\frac{1}{\kappa^{2}}d\theta\big)^{2}}\int^{2\pi}_{0}\frac{\kappa_{t}}{\kappa^{3}}d\theta\\
&=\frac{\lambda^{2}}{\pi}\int^{2\pi}_{0}\frac{1}{\kappa}\bigg[\big(\kappa-\frac{\lambda}{\kappa}\big)_{\theta\theta}
+\kappa-\frac{\lambda}{\kappa}\bigg]d\theta\\
&=-\frac{\lambda^{2}}{\pi}\int^{2\pi}_{0}\big(\frac{1}{\kappa}\big)_{\theta}\big(\kappa-\frac{\lambda}{\kappa}\big)_{\theta}d\theta
-\frac{\lambda^{2}A_{t}}{\pi}\\
&=\frac{\lambda^{2}}{\pi}\int^{2\pi}_{0}\frac{\kappa^{2}_{\theta}}{\kappa^{2}}\big(1+\frac{\lambda}{\kappa^{2}}\big)\ge0.
\end{align*}
The proof is done.
\end{proof}

\begin{theorem}\label{hasc}
Under the evolution process, the isoperimetric deficit $L^{2}-4\pi A$ decreases and decays to zero exponentially. Furthermore, if the evolving curve does not develop singularities then it converges to a circle in the Hausdorff sense.
\end{theorem}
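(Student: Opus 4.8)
The plan is to treat the two assertions separately, and to note at the outset that since Theorem~\ref{hasc} is proved before long-time existence is established, the hypothesis ``does not develop singularities'' should be read as ``the flow is defined on $[0,\infty)$'', which is what makes the limit $t\to\infty$ meaningful. Set $I(t)=L^{2}(t)-4\pi A(t)$. By Lemma~\ref{llaa} the area is constant, so $I'(t)=2L(t)L'(t)$; inserting the estimate for $dL/dt$ derived inside the proof of Lemma~\ref{llaa}, namely $\frac{dL}{dt}\le \frac{\pi L(4\pi A-L^{2})}{(L^{2}-2\pi A)A}$, gives
$$\frac{dI}{dt}\le -\frac{2\pi L^{2}}{(L^{2}-2\pi A)A}\,\big(L^{2}-4\pi A\big)=-\frac{2\pi L^{2}}{(L^{2}-2\pi A)A}\,I .$$
In particular $I$ is nonincreasing, which is the first half of the claim. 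To promote this to exponential decay I would bound the coefficient below by a constant depending only on the initial curve: by Remark~\ref{lajie} one has $4\pi A(0)\le L^{2}(t)\le L^{2}(0)$ and $A(t)\equiv A(0)$, hence
$$\frac{2\pi L^{2}}{(L^{2}-2\pi A)A}\ge \frac{2\pi\cdot 4\pi A(0)}{\big(L^{2}(0)-2\pi A(0)\big)A(0)}=\frac{8\pi^{2}}{L^{2}(0)-2\pi A(0)}=:\delta>0,$$
the positivity of $\delta$ coming from $L^{2}(0)\ge 4\pi A(0)>2\pi A(0)$. Therefore $\frac{dI}{dt}\le-\delta I$, and Gronwall's inequality yields $L^{2}(t)-4\pi A(t)\le\big(L^{2}(0)-4\pi A(0)\big)e^{-\delta t}\to0$.

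For the second assertion I would use a Bonnesen-type inequality for the convex region $\Omega_{t}$ enclosed by $X(\cdot,t)$, which is convex for all time by the preserved-convexity lemma: there exist concentric disks $B_{r_{1}(t)}(p_{t})\subseteq\Omega_{t}\subseteq B_{r_{2}(t)}(p_{t})$ whose radii satisfy $0\le r_{2}(t)-r_{1}(t)\le C_{0}\sqrt{L^{2}(t)-4\pi A(t)}$ for an absolute constant $C_{0}$ (e.g. $C_{0}=1/\pi$). The first step already forces the right-hand side to $0$. Since $\pi r_{1}(t)^{2}\le A(t)=A(0)\le\pi r_{2}(t)^{2}$, the radii are squeezed: $r_{1}(t)\le\rho_{0}\le r_{2}(t)$ with $\rho_{0}:=\sqrt{A(0)/\pi}$, and $r_{2}(t)-r_{1}(t)\to0$ forces both $r_{1}(t),r_{2}(t)\to\rho_{0}$. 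Consequently every point $x$ of $X(\cdot,t)$ obeys $\bigl|\,|x-p_{t}|-\rho_{0}\,\bigr|\le\max\{r_{2}(t)-\rho_{0},\ \rho_{0}-r_{1}(t)\}\to0$, so the Hausdorff distance between the evolving curve and the circle $\partial B_{\rho_{0}}(p_{t})$ tends to zero; this is exactly convergence to a circle of radius $\sqrt{A(0)/\pi}$ in the Hausdorff sense.

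The main obstacle I anticipate is not the differential inequality, which is essentially routine once Lemma~\ref{llaa} is in hand, but rather (i) ensuring the decay rate $\delta$ does not degenerate, which is precisely where the two-sided bound $\sqrt{4\pi A(0)}\le L(t)\le L(0)$ of Remark~\ref{lajie} is indispensable, and (ii) recognizing that the Bonnesen step controls the curve only up to the (a priori $t$-dependent) center $p_{t}$. Pinning down a single limiting circle and upgrading to smooth convergence is the business of Sections~4--5; at this stage one should claim no more than Hausdorff convergence to a circle of the stated radius.
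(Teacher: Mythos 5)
Your proof is correct, and it reaches both conclusions of Theorem~\ref{hasc} by a route that is close in spirit but not identical to the paper's. You recycle the bound $\frac{dL}{dt}\le \frac{\pi L(4\pi A-L^{2})}{(L^{2}-2\pi A)A}$ obtained inside Lemma~\ref{llaa}, which rests on Gage's inequality together with the Pan--Yang inequality \eqref{pyi}; you then factor out the isoperimetric deficit to read off a linear differential inequality, and bound the (time-dependent) coefficient below via Remark~\ref{lajie}, with a correct observation that $L^{2}(0)>2\pi A(0)$ so the rate $\delta$ is a positive constant. The paper instead returns to the raw identity $\frac{d}{dt}(L^{2}-4\pi A)=\frac{4\pi L^{2}}{\oint\kappa^{-1}ds}-2L\oint\kappa^{2}ds$ and replaces Pan--Yang by the weaker and more elementary Cauchy--Schwarz bound $\oint\kappa^{-1}ds\ge L^{2}/2\pi$, which combined with Gage's inequality produces the deficit $L^{2}-4\pi A$ directly with a constant-in-time coefficient and no need to invoke Remark~\ref{lajie} or the two-sided length bound at all. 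The paper's route therefore yields a cleaner exponential rate with less input, while yours has the modest advantage of not re-proving anything already inside Lemma~\ref{llaa}. For the Hausdorff convergence both arguments rely on a Bonnesen-type inequality; the paper quotes the version $L^{2}-4\pi A\ge\pi^{2}(r_{out}-r_{in})^{2}$ while you use an equivalent concentric-annulus form, and your explicit remark that the center $p_{t}$ may drift with $t$ — so that at this stage one only pins down the radius, not a fixed limiting circle — is an accurate and useful caveat that the paper leaves implicit.
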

\begin{proof}
Compute that
\begin{equation*}
\frac{d}{dt}(L^{2}-4\pi A)=2LL_{t}=\frac{4\pi L^{2}}{\oint\frac{1}{\kappa}ds}-2L\oint\kappa^{2} ds.
\end{equation*}
The Cauchy–Schwartz inequality gives us
$$\oint\frac{1}{\kappa}ds=\int^{2\pi}_{0}\frac{1}{\kappa^{2}}d\theta
\ge\frac{1}{2\pi}\bigg(\int^{2\pi}_{0}\frac{1}{\kappa}d\theta\bigg)^{2}=\frac{L^{2}}{2\pi}.$$
Combining this with \eqref{gi} we obtain
$$\frac{d}{dt}(L^{2}-4\pi A)\le-\frac{\pi}{AL}(L^{2}-4\pi A)\le-\frac{\pi}{AL(0)}(L^{2}-4\pi A).$$
Hence,
$$L^{2}-4\pi A\le (L^{2}(0)-4\pi A(0))e^{-\frac{\pi}{AL(0)}t},$$
and as $t\to\infty,$ we have $L^{2}-4\pi A\to0.$
By the Bonnesen inequality \cite{Osserman-Bonnesn}
$$L^{2}-4\pi A\ge\pi^{2}(r_{out}(t)-r_{in}(t))^{2},$$
where $r_{in}(t)$ is the inradius of $X$ and $r_{out}(t)$ is the outradius of $X$, the evolving curve must converge to a round circle with
$$\lim\limits_{t\to\infty}r_{in}(t)=\lim\limits_{t\to\infty}r_{out}(t)=\sqrt{\frac{A(0)}{\pi}}.$$
\end{proof}
\begin{remark}
In fact, we also have
$$\lim\limits_{t\to\infty}\kappa(\theta,t)=\frac{1}{R}=\sqrt{\frac{\pi}{A(0)}}, \quad \lim\limits_{t\to\infty}\lambda(t)=\frac{1}{R^{2}}=\frac{\pi}{A(0)},$$
where $R$ is radius of the limiting circle.
\end{remark}
\section{Short-time existence of the flow}
In this section, we will reduce the evolution problem for the curve to an initial value
problem of a certain nonlinear differential equation system s and then
consider the short-time existence and uniqueness for the later initial value
problem.

Similar to the proof of (\cite{Jiang-Pan} Lemma 3.2), the following "closing condition" can easily be verified.
\begin{lemma}
If $\kappa_{0}(\theta)=\kappa(\theta,0)>0$ satisfies $\int^{2\pi}_{0}\frac{e^{i\theta}}{\kappa_{0}(\theta)}d\theta=0$, then for each $t>0$, the solution $\kappa(\theta,t)$ to \eqref{kt} satisfies $\int^{2\pi}_{0}\frac{e^{i\theta}}{\kappa(\theta,t)d\theta}=0.$
\end{lemma}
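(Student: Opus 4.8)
The plan is to show that the quantity $I(t):=\int_0^{2\pi}\frac{e^{i\theta}}{\kappa(\theta,t)}\,d\theta$ satisfies a linear homogeneous ODE in $t$, so that $I(0)=0$ forces $I(t)\equiv0$. First I would differentiate under the integral sign and substitute the evolution equation \eqref{kt} for $\kappa_t$. Since $\partial_t\!\left(\frac{1}{\kappa}\right)=-\frac{\kappa_t}{\kappa^2}$, and \eqref{kt} gives $\kappa_t=\kappa^2\big[(\kappa-\tfrac{\lambda}{\kappa})_{\theta\theta}+\kappa-\tfrac{\lambda}{\kappa}\big]$, the $\kappa^2$ cancels and one obtains
\begin{equation*}
\frac{d}{dt}I(t)=-\int_0^{2\pi} e^{i\theta}\Big[\big(\kappa-\tfrac{\lambda}{\kappa}\big)_{\theta\theta}+\big(\kappa-\tfrac{\lambda}{\kappa}\big)\Big]\,d\theta.
\end{equation*}

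The key step is then to integrate by parts twice in $\theta$. Because everything is $2\pi$-periodic in $\theta$ (the curve is closed and convex, so $\kappa(\cdot,t)$ is a smooth positive $2\pi$-periodic function), there are no boundary terms, and $\int_0^{2\pi} e^{i\theta}\,\phi_{\theta\theta}\,d\theta=\int_0^{2\pi}(e^{i\theta})_{\theta\theta}\,\phi\,d\theta=-\int_0^{2\pi}e^{i\theta}\phi\,d\theta$ for $\phi=\kappa-\tfrac{\lambda}{\kappa}$. Hence the two terms in the bracket cancel exactly:
\begin{equation*}
\frac{d}{dt}I(t)=-\int_0^{2\pi} e^{i\theta}\phi_{\theta\theta}\,d\theta-\int_0^{2\pi}e^{i\theta}\phi\,d\theta=\int_0^{2\pi}e^{i\theta}\phi\,d\theta-\int_0^{2\pi}e^{i\theta}\phi\,d\theta=0.
\end{equation*}
So in fact $I(t)$ is constant, not merely governed by a linear ODE; this is even cleaner than the general strategy and is really a reflection of the fact that $e^{i\theta}$ spans the kernel of the operator $\partial_{\theta\theta}+1$. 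Since $I(0)=\int_0^{2\pi}\frac{e^{i\theta}}{\kappa_0(\theta)}\,d\theta=0$ by hypothesis, we conclude $I(t)=0$ for all $t>0$.

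The only point requiring care — and the closest thing to an obstacle — is justifying the differentiation under the integral sign and the absence of boundary terms in the integration by parts. Both follow once we know $\kappa(\theta,t)$ is smooth in $\theta$, $2\pi$-periodic, and bounded below by a positive constant on compact time intervals; the lower bound is exactly \eqref{kxj} from the preceding lemma, and the regularity is part of the (local) existence statement being set up in this section. Alternatively, one can cite the analogous computation in \cite{Jiang-Pan} Lemma 3.2 verbatim, since the structure of \eqref{kt} — a second-order term plus a zeroth-order term with matching coefficients relative to the operator $\partial_{\theta\theta}+1$ — is identical to the one treated there; the nonlocal coefficient $\lambda(t)$ plays no role because it factors through $\phi=\kappa-\tfrac{\lambda}{\kappa}$ and is killed by the same cancellation. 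This geometric "closing condition" is precisely what guarantees that the reconstructed curve from $\kappa(\theta,t)$ closes up, so it will be used in the next section when the evolution problem is reformulated as an initial value problem for $\kappa$.
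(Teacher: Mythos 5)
Your proof is correct and is precisely the standard argument that the paper alludes to by citing Jiang--Pan Lemma~3.2: substitute the evolution equation $\kappa_t=\kappa^2\bigl[(\kappa-\lambda/\kappa)_{\theta\theta}+(\kappa-\lambda/\kappa)\bigr]$ into $\partial_t(1/\kappa)=-\kappa_t/\kappa^2$, and observe that $e^{i\theta}$ is annihilated by the adjoint of $\partial_{\theta\theta}+1$, so two integrations by parts over the closed circle give $\frac{d}{dt}\int_0^{2\pi}e^{i\theta}/\kappa\,d\theta=0$. Your remark that the nonlocal coefficient $\lambda(t)$ is irrelevant because it enters only through $\phi=\kappa-\lambda/\kappa$ is exactly the right observation, and the regularity/periodicity caveats you flag are the correct ones and are supplied by the short-time existence theory.
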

Next, we will show that the curve evolution problem can be reduced to an initial value problem
of some nonlinear differential-integral equations.
\begin{theorem}
Geometric evolution problem \eqref{fl2} is equivalent to the following initial
value problem: Find $\kappa=\kappa(\theta,t): S^{1}\times[0,\infty)\to\mathbb{R^{+}}, L=L(t):[0,\infty)\to\mathbb{R^{+}}$ such that

$(1)\,\,\kappa\in C^{2+\sigma,1+\frac{\sigma}{2}}((S^{1}\times[0,T^{\ast})), L\in C^{1}([0,T^{\ast}))$ for all $T^{\ast}>0$.

$(2)$\begin{align*}
\frac{\partial\kappa}{\partial t}
&=\big(\kappa^{2}+\frac{2\pi}{\int^{2\pi}_{0}\frac{1}{\kappa^{2}}d\theta}\big)\kappa_{\theta\theta}-\frac{4\pi}{\kappa\int^{2\pi}_{0}\frac{1}{\kappa^{2}}d\theta}\kappa^{2}_{\theta}
+\kappa^{3}-\frac{2\pi}{\int^{2\pi}_{0}\frac{1}{\kappa^{2}}d\theta}\kappa,\\
\frac{dL}{dt}&=-\int^{2\pi}_{0}\kappa d\theta+\frac{2\pi L}{\int^{2\pi}_{0}\frac{1}{\kappa^{2}}d\theta}.
\end{align*}

$(3)(a)\kappa_{0}(\theta)=\kappa(\theta,0)\in C^{1+\sigma}(S^{1}), \kappa_{0}(\theta)>0$ and $\int^{2\pi}_{0}\frac{e^{i\theta}}{\kappa_{0}(\theta)}d\theta=0$; $(b)L(0)=\int^{2\pi}_{0}\frac{d\theta}{\kappa_{0}(\theta)}>0.$
\end{theorem}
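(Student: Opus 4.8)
The proof is the classical reduction of a convex curve flow to a scalar equation for the curvature regarded as a function of the tangent angle, so the plan is to establish the two implications separately. Throughout I use $\theta$ as the spatial variable, which is legitimate because \eqref{fl2} keeps $\theta$ time-independent — that is exactly what the tangential term $\eta=-\partial_{\theta}\big(\kappa-\frac{\lambda}{\kappa}\big)$ was engineered for — together with the identities $ds=\kappa^{-1}d\theta$, $T(\theta)=(\cos\theta,\sin\theta)$, $N(\theta)=(-\sin\theta,\cos\theta)$, and $\lambda(t)=\frac{2\pi}{\oint\kappa^{-1}ds}=\frac{2\pi}{\int_{0}^{2\pi}\kappa^{-2}d\theta}$.

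\textbf{Flow $\Rightarrow$ IVP.} Let $X(u,t)$ solve \eqref{fl2} with $X_{0}$ convex. By the preserved-convexity lemma $\kappa>0$ for all $t$, so $\kappa(\theta,t)$ and $L(t)=\oint ds=\int_{0}^{2\pi}\kappa^{-1}d\theta$ are well defined, and the regularity in (1) is inherited from that of $X$. Equation \eqref{kt} is the evolution equation of $\kappa$, and substituting $\lambda=\frac{2\pi}{\int_{0}^{2\pi}\kappa^{-2}d\theta}$ turns it into the first equation of (2); Gage's formula \eqref{lt} together with $\oint\kappa^{2}ds=\int_{0}^{2\pi}\kappa\,d\theta$ and $\oint ds=L$ gives the second. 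Finally $\kappa_{0}$ is the curvature of the closed convex curve $X_{0}$, so $\int_{0}^{2\pi}e^{i\theta}\kappa_{0}^{-1}d\theta=\oint T\,ds=0$ and $L(0)=\int_{0}^{2\pi}\kappa_{0}^{-1}d\theta$, which is (3).

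\textbf{IVP $\Rightarrow$ flow.} This is the direction with real content: one must rebuild the curve from $\kappa$. Define
\[
X(\theta,t)=\mathbf{c}(t)+\Big(\int_{0}^{\theta}\frac{\cos\varphi}{\kappa(\varphi,t)}\,d\varphi,\ \int_{0}^{\theta}\frac{\sin\varphi}{\kappa(\varphi,t)}\,d\varphi\Big).
\]
The closing condition in (3)(a) propagates to every $t>0$ by the preceding lemma, so the integrand has zero total integral over $S^{1}$ and $X(\cdot,t)$ is a well-defined closed curve with $\partial_{\theta}X=\kappa^{-1}T$, hence arc length $ds=\kappa^{-1}d\theta$, tangent angle $\theta$, and curvature $\kappa$; since $\kappa>0$ it is strictly convex. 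Choose $\mathbf{c}(0)$ so that $X(\cdot,0)=X_{0}$ (possible exactly because $\kappa_{0}$ is the curvature of $X_{0}$), and let $\mathbf{c}(t)=\mathbf{c}(0)+\int_{0}^{t}\big(-V_{\theta}(0,\tau),\,V(0,\tau)\big)\,d\tau$ with $V:=\kappa-\frac{\lambda}{\kappa}$. Differentiating $X$ in $t$ and using the first equation of (2) in the form $\partial_{t}(\kappa^{-1})=-(V_{\theta\theta}+V)$, two integrations by parts yield
\[
\int_{0}^{\theta}(V_{\varphi\varphi}+V)(\cos\varphi,\sin\varphi)\,d\varphi=V_{\theta}(\theta)\,T(\theta)-V(\theta)\,N(\theta)-\big(V_{\theta}(0),\,-V(0)\big),
\]
and the $\theta$-independent boundary vector is cancelled by $\mathbf{c}'(t)$, giving $\partial_{t}X=-V_{\theta}\,T+V\,N=\eta\,T+\big(\kappa-\frac{\lambda}{\kappa}\big)N$, i.e. \eqref{fl2}.

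\textbf{Consistency of $L$, and the main obstacle.} It remains to check that the scalar unknown $L(t)$ of the IVP equals the geometric length of the reconstructed curve. Set $\widetilde{L}(t)=\int_{0}^{2\pi}\kappa^{-1}d\theta$; integrating $\partial_{t}(\kappa^{-1})=-(V_{\theta\theta}+V)$ over $S^{1}$ kills the $V_{\theta\theta}$ term by periodicity and leaves $\widetilde{L}'=-\int_{0}^{2\pi}\kappa\,d\theta+\lambda\widetilde{L}$, the same linear ODE that (2) imposes on $L$; since $L(0)=\widetilde{L}(0)$ by (3)(b), uniqueness forces $L\equiv\widetilde{L}$, so that $\lambda=\frac{2\pi}{\int_{0}^{2\pi}\kappa^{-2}d\theta}$ is genuinely the nonlocal term $\frac{2\pi}{\oint\kappa^{-1}ds}$ of \eqref{fl2}; the equivalence of \eqref{fl2} with \eqref{fl} is the usual invariance of the geometric flow under tangential reparametrization, already noted in Section 2. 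I expect the only delicate point to be the backward direction: one must pin down the base path $\mathbf{c}(t)$ so that the reconstructed family satisfies \eqref{fl2} \emph{exactly}, not merely up to a tangential field and a rigid motion, and this hinges on the two integrations by parts above and, crucially, on the preservation of the closing condition, without which $X(\cdot,t)$ would not close up. The regularity claims in (1) are then routine bookkeeping: $\kappa\in C^{2+\sigma,1+\frac{\sigma}{2}}$ makes $X$ one derivative smoother in $\theta$, while $\lambda\in C^{1}$ and the continuity of $V_{\theta}(0,\cdot)$ give $\mathbf{c}\in C^{1}$ and hence $L\in C^{1}$.
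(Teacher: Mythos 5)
Your proposal is correct and follows essentially the same route as the paper: forward direction via the derived evolution equations \eqref{kt}, \eqref{lt} and the closing condition, backward direction by reconstructing $\bar{X}(\theta,t)$ from $\kappa$ via the integral formula with a base path $\mathbf{c}(t)$ tuned to absorb the boundary terms from the integrations by parts, then identifying $L$ with the geometric length. The only cosmetic difference is that you invoke ODE uniqueness explicitly to conclude $L\equiv\widetilde{L}$, whereas the paper verifies $\bar L(0)=L(0)$ and $\bar L_t=L_t$ directly, which is the same argument in slightly different clothing.
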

\begin{proof}
Equations \eqref{lt} and \eqref{kt} tell us that given a solution to the evolution problem \eqref{fl2}, the length and curvature functions of the evolving curves, expressed in $\theta$ and $t$ coordinates, will satisfy $(2)$ and $(3)$.

Conversely, given a solution $\kappa(\theta,t)$ and $L(t)$ to the initial value problem of the differential-integral system, then one can define a curve
$$\bar{X}(\theta,t)=(x(\theta,t),y(\theta,t))+(C_{1}(t),C_{2}(t))$$
with
\begin{equation}\label{xy}
x(\theta,t)=\int^{\theta}_{0}\frac{\cos\phi}{\kappa(\phi,t)}d\phi,\quad y(\theta,t)=\int^{\theta}_{0}\frac{\sin\phi}{\kappa(\phi,t)}d\phi,
\end{equation}
$$C_{1}(t)=-\int^{t}_{0}\frac{\partial}{\partial\theta}\bigg(\kappa-\frac{2\pi}{\kappa\int^{2\pi}_{0}\frac{1}{\kappa^{2}}d\theta}\bigg)(0,\tau)d\tau\quad
\text{and}\quad C_{2}(t)=\int^{t}_{0}\bigg(\kappa-\frac{2\pi}{\kappa\int^{2\pi}_{0}\frac{1}{\kappa^{2}}d\theta}\bigg)(0,\tau)d\tau.$$
And this curve satisfies the evolution equation
$$\frac{\partial \bar{X}}{\partial t}=-\frac{\partial}{\partial\theta}\bigg(\kappa-\frac{2\pi}{\kappa\int^{2\pi}_{0}\frac{1}{\kappa^{2}}d\theta}\bigg)T
+\big(\kappa-\frac{2\pi}{\kappa\int^{2\pi}_{0}\frac{1}{\kappa^{2}}d\theta}\big)N.$$
A direct calculation gives that the curvature of the curve $\bar{X}(\theta,t)$ is exactly $\kappa(\theta,t)$.

Next, we check that the length $\bar{L}(t)$ of the curve $\bar{X}(\theta,t)$ is $L(t)$. By \eqref{xy}, we have
\begin{align*}
\bar{L}(t)&=\int^{2\pi}_{0}\sqrt{x^{2}_{\theta}+y^{2}_{\theta}}d\theta=\int^{2\pi}_{0}\sqrt{\frac{\cos^{2}\theta}{\kappa^{2}}
+\frac{\sin^{2}\theta}{\kappa^{2}}}d\theta=\int^{2\pi}_{0}\frac{1}{\kappa}d\theta,\\
\bar{L}(0)&=\int^{2\pi}_{0}\frac{1}{\kappa_{0}}d\theta=L(0).
\end{align*}
While
\begin{align*}
\bar{L}_{t}&=\int^{2\pi}_{0}\big(\frac{1}{\kappa}\big)_{t}d\theta=\int^{2\pi}_{0}\bigg[\frac{1}{\kappa_{0}(\theta)}-
\int^{t}_{0}\bigg(\big(\kappa-\frac{2\pi}{\kappa\int^{2\pi}_{0}\frac{1}{\kappa^{2}}d\theta}\big)_{\theta\theta}
+\kappa-\frac{2\pi}{\kappa\int^{2\pi}_{0}\frac{1}{\kappa^{2}}d\theta}\bigg)d\tau\bigg]_{t}d\theta\\
&=-\int^{2\pi}_{0}\bigg(\big(\kappa-\frac{2\pi}{\kappa\int^{2\pi}_{0}\frac{1}{\kappa^{2}}d\theta}\big)_{\theta\theta}
+\kappa-\frac{2\pi}{\kappa\int^{2\pi}_{0}\frac{1}{\kappa^{2}}d\theta}\bigg)d\theta\\
&=\frac{2\pi L}{\int^{2\pi}_{0}\frac{1}{\kappa^{2}}d\theta}-\int^{2\pi}_{0}\kappa d\theta=L_{t},
\end{align*}
which is desired.
\end{proof}

\begin{lemma}
For $t<t_{0}=\frac{1}{m_{0}}\big[\int^{2\pi}_{0}\kappa^{2}_{0}(\theta)d\theta\big]^{-1}$, where $m_{0}$ is an absolute constant, we have
$$\int^{2\pi}_{0}\kappa^{2}(\theta,t)d\theta\le\big(M(t)+1\big)\int^{2\pi}_{0}\kappa^{2}_{0}(\theta)d\theta,$$
where $M(t)=\frac{1}{1-cm_{0}t}, c=\int^{2\pi}_{0}\kappa^{2}_{0}(\theta)d\theta.$ In particular, for $t\in[0,\frac{t_{0}}{2}]$, we obtain
\begin{equation}\label{k2sj}
\int^{2\pi}_{0}\kappa^{2}(\theta,t)d\theta\le3\int^{2\pi}_{0}\kappa^{2}_{0}(\theta)d\theta.
\end{equation}
\end{lemma}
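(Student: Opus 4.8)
The plan is to extract from the evolution equation \eqref{kt} a closed Riccati-type differential inequality $E'\le m_0 E^2$ for the curvature energy $E(t):=\int_0^{2\pi}\kappa^2(\theta,t)\,d\theta$ and then integrate it; throughout, the solution is understood on its maximal interval of existence, where $\kappa>0$ (preserved convexity) and hence $\lambda(t)>0$ by Remark~\ref{lajie}. Differentiating $E$ and substituting \eqref{kt} gives
\[
\frac{dE}{dt}=2\int_0^{2\pi}(\kappa^3+\lambda\kappa)\kappa_{\theta\theta}\,d\theta-4\lambda\int_0^{2\pi}\kappa_\theta^2\,d\theta+2\int_0^{2\pi}\kappa^4\,d\theta-2\lambda\int_0^{2\pi}\kappa^2\,d\theta .
\]
Integrating the first term by parts over $S^1$ turns it into $-\int_0^{2\pi}(6\kappa^2+2\lambda)\kappa_\theta^2\,d\theta$; since $\lambda>0$, every $\lambda$-term that results is non-positive and may be discarded, leaving
\[
\frac{dE}{dt}\le 2\int_0^{2\pi}\kappa^4\,d\theta-6\int_0^{2\pi}\kappa^2\kappa_\theta^2\,d\theta .
\]

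The heart of the argument is to absorb the quartic term into the negative gradient term plus a multiple of $E^2$. Set $w:=\kappa^2\ge 0$, so that $\int\kappa^4=\int w^2$, $\int\kappa^2\kappa_\theta^2=\tfrac14\int w_\theta^2$ and $\int w=E$. Starting from a point where $w$ equals its mean $\bar w\le\frac1{2\pi}\|w\|_{L^1}$ and integrating $w_\theta$ (with Cauchy--Schwarz) gives the one-dimensional interpolation $\|w\|_{L^\infty(S^1)}\le\frac1{2\pi}\|w\|_{L^1}+\sqrt{2\pi}\,\|w_\theta\|_{L^2}$; then $\int w^2\le\|w\|_{L^\infty}\|w\|_{L^1}$ together with Young's inequality yields
\[
2\int_0^{2\pi}\kappa^4\,d\theta\le 6\int_0^{2\pi}\kappa^2\kappa_\theta^2\,d\theta+m_0E^2,\qquad m_0:=\frac1\pi+\frac{4\pi}{3},
\]
an absolute constant. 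Plugging this back gives the desired $\dfrac{dE}{dt}\le m_0E^2$.

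It then remains to compare with the scalar ODE. From $-\dfrac{d}{dt}\dfrac1E\le m_0$ one integrates to $\dfrac1{E(t)}\ge\dfrac1{E(0)}-m_0t$, which stays positive precisely for $t<t_0=\big(m_0\int_0^{2\pi}\kappa_0^2\,d\theta\big)^{-1}$, so that
\[
E(t)\le\frac{E(0)}{1-m_0E(0)t}=M(t)\int_0^{2\pi}\kappa_0^2\,d\theta\le\big(M(t)+1\big)\int_0^{2\pi}\kappa_0^2\,d\theta .
\]
For $t\le t_0/2$ one has $1-m_0E(0)t\ge\tfrac12$, hence $M(t)\le 2$, and \eqref{k2sj} follows.

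The integration by parts and the ODE integration are routine; the one step that requires care is the tuning of constants in the $L^\infty$-interpolation and the ensuing Young's inequality, so that the coefficient of $\int\kappa^2\kappa_\theta^2$ produced there is exactly the $6$ delivered by the integration by parts — this is precisely what makes the bad quartic term cancel and leaves a clean $E^2$ on the right-hand side. (The same effect can be obtained from the Gagliardo--Nirenberg inequality $\|w\|_{L^2}^2\le C\|w_\theta\|_{L^2}^{2/3}\|w\|_{L^1}^{4/3}+C\|w\|_{L^1}^2$ on $S^1$ combined with Young's inequality.)
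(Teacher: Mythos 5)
Your argument is correct, and it follows the same energy-estimate philosophy as the paper: multiply the curvature equation by $\kappa$, integrate by parts, discard the favourable $\lambda$-terms, and absorb the quartic term $\int\kappa^4$ into the dissipation $\int\kappa^2\kappa_\theta^2$ via a one-dimensional interpolation plus Young's inequality. What differs is the packaging of the resulting estimate. The paper integrates in time first, arriving at the integral inequality
\[
E(t)\le m_0\int_0^t E(\tau)^2\,d\tau + E(0),
\]
introduces $F(t)=\int_0^t E(\tau)^2\,d\tau$, and then solves the Bernoulli-type relation $(F')^{1/2}\le m_0F+c$. You instead read off the pointwise Riccati inequality $E'\le m_0E^2$ and integrate it directly; this is equivalent but eliminates the detour through $F$. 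You also swap the paper's Nirenberg interpolation $\|f\|_2\le m\|f_\theta\|_2^{1/3}\|f\|_1^{2/3}$ (stated for mean-zero $f$ and then extended to general functions, with a free parameter $\epsilon$ tuned so that the $\|w_\theta\|_2^2$-coefficient stays below the dissipation) for the elementary bound $\|w\|_\infty\le\frac1{2\pi}\|w\|_1+\sqrt{2\pi}\,\|w_\theta\|_2$ together with $\int w^2\le\|w\|_\infty\|w\|_1$. This makes the constant fully explicit, $m_0=\frac1\pi+\frac{4\pi}{3}$, and in fact delivers the slightly sharper bound $E(t)\le M(t)E(0)$, from which $(M(t)+1)E(0)$ and \eqref{k2sj} follow a fortiori. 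The only small imprecision in your write-up is the remark that the interpolation must produce the coefficient ``exactly $6$'' on $\int\kappa^2\kappa_\theta^2$; any coefficient $\le 6$ suffices, and that is all your Young step actually achieves.
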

\begin{proof}
By the evolution equation of curvature $\kappa$, one gets
\begin{equation}\label{kkt}
\kappa\kappa_{t}=\big(\kappa^{3}+\frac{2\pi}{\int^{2\pi}_{0}\frac{1}{\kappa^{2}}d\theta}\kappa\big)\kappa_{\theta\theta}
-\frac{4\pi}{\int^{2\pi}_{0}\frac{1}{\kappa^{2}}d\theta}\kappa^{2}_{\theta}
+\kappa^{4}-\frac{2\pi}{\int^{2\pi}_{0}\frac{1}{\kappa^{2}}d\theta}\kappa^{2}.
\end{equation}
Integrating \eqref{kkt} over $S^{1}\times[0,t]$ yields
\begin{align*}
\frac{1}{2}\int^{2\pi}_{0}&\kappa^{2}d\theta-\frac{1}{2}\int^{2\pi}_{0}\kappa^{2}_{0}d\theta\\
=&\frac{1}{2}\int^{t}_{0}\big(
\int^{2\pi}_{0}\kappa^{2}d\theta\big)_{t}dt=\int^{t}_{0}\int^{2\pi}_{0}\kappa\kappa_{t}d\theta dt\\
=&\int^{t}_{0}\int^{2\pi}_{0}\bigg[\big(\kappa^{3}+\frac{2\pi}{\int^{2\pi}_{0}\frac{1}{\kappa^{2}}d\theta}\kappa\big)
\kappa_{\theta\theta}-\frac{4\pi}{\int^{2\pi}_{0}\frac{1}{\kappa^{2}}d\theta}\kappa^{2}_{\theta}
+\kappa^{4}-\frac{2\pi}{\int^{2\pi}_{0}\frac{1}{\kappa^{2}}d\theta}\kappa^{2}\bigg]d\theta dt\\
=&\int^{t}_{0}\int^{2\pi}_{0}\kappa^{3}\kappa_{\theta\theta}d\theta dt-\int^{t}_{0}\int^{2\pi}_{0}\frac{6\pi}{\int^{2\pi}_{0}\frac{1}{\kappa^{2}}d\theta}\kappa^{2}_{\theta}d\theta dt
+\int^{t}_{0}\int^{2\pi}_{0}\kappa^{4}d\theta dt\\
&-\int^{t}_{0}\int^{2\pi}_{0}\frac{2\pi}{\int^{2\pi}_{0}\frac{1}{\kappa^{2}}d\theta}\kappa^{2}d\theta dt\\
\le&\int^{t}_{0}\int^{2\pi}_{0}\kappa^{3}\kappa_{\theta\theta}d\theta dt+\int^{t}_{0}\int^{2\pi}_{0}\kappa^{4}d\theta dt\\
=&-\frac{3}{4}\int^{t}_{0}\int^{2\pi}_{0}\big((\kappa^{2})_{\theta}\big)^{2}d\theta dt+\int^{t}_{0}\int^{2\pi}_{0}\kappa^{4}d\theta dt.
\end{align*}
Set $\kappa^{2}=\omega$, then
\begin{equation}\label{ome}
\frac{1}{2}\int^{2\pi}_{0}\omega d\theta+\frac{3}{4}\int^{t}_{0}\int^{2\pi}_{0}\omega^{2}_{\theta}d\theta dt
\le\frac{1}{2}\int^{2\pi}_{0}\omega_{0} d\theta+\int^{t}_{0}\int^{2\pi}_{0}\omega^{2}d\theta dt.
\end{equation}
We shall use the Nirenberg's interpolation inequality (see \cite{oaso},pp.62-63): For any function $f$ satisfying $\int^{2\pi}_{0}fd\theta=0$, there holds
$$\Vert f\Vert_{2}\le m\Vert f_{\theta}\Vert^{\frac{1}{3}}_{2}\Vert f\Vert^{\frac{2}{3}}_{1},$$
where $m=(1+\frac{1}{2})^{\frac{1}{3}}=(\frac{3}{2})^{\frac{1}{3}}$. Hence for any function $v$, $v-\frac{1}{2\pi}\int^{2\pi}_{0}vd\theta=f$ satisfies the condition that $\int^{2\pi}_{0}fd\theta=0$. Using the Nirenberg’s inequality yields
$$\bigg\Vert v-\frac{1}{2\pi}\int^{2\pi}_{0}vd\theta\bigg\Vert_{2}\le m\Vert v_{\theta}\Vert^{\frac{1}{3}}_{2}\cdot
\bigg\Vert v-\frac{1}{2\pi}\int^{2\pi}_{0}vd\theta\bigg\Vert^{\frac{2}{3}}_{1}.$$
While
\begin{align*}
\bigg\Vert v-\frac{1}{2\pi}\int^{2\pi}_{0}vd\theta\bigg\Vert_{2}&\ge\Vert v\Vert_{2}-\frac{1}{\sqrt{2\pi}}\int^{2\pi}_{0}\lvert v\lvert d\theta,\\
\bigg\Vert v-\frac{1}{2\pi}\int^{2\pi}_{0}vd\theta\bigg\Vert_{1}&=\int^{2\pi}_{0}\bigg\lvert v-\frac{1}{2\pi}\int^{2\pi}_{0}vd\theta\bigg\lvert d\theta\le\int^{2\pi}_{0}\lvert v\lvert d\theta+\bigg\lvert \int^{2\pi}_{0} v d\theta\bigg\lvert\le2\Vert v\Vert_{1}.
\end{align*}
Therefore, for any function $v$, we have
\begin{align*}
\Vert v\Vert_{2}&\le\bigg\Vert v-\frac{1}{2\pi}\int^{2\pi}_{0}vd\theta\bigg\Vert_{2}+\frac{1}{\sqrt{2\pi}}\Vert v\Vert_{1}\\
&\le m\Vert v_{\theta}\Vert^{\frac{1}{3}}_{2}\cdot\bigg\Vert v-\frac{1}{2\pi}\int^{2\pi}_{0}vd\theta\bigg\Vert^{\frac{2}{3}}_{1}+\frac{1}{\sqrt{2\pi}}\Vert v\Vert_{1}\\
&\le m\Vert v_{\theta}\Vert^{\frac{1}{3}}_{2}\cdot(2^{\frac{2}{3}}\Vert v\Vert^{\frac{2}{3}}_{1})+\frac{1}{\sqrt{2\pi}}\Vert v\Vert_{1}\\
&=m_{1}\Vert v_{\theta}\Vert^{\frac{1}{3}}_{2}\Vert v\Vert^{\frac{2}{3}}_{1}+m_{2}\Vert v\Vert_{1},
\end{align*}
where $m_{1}=2^{\frac{2}{3}}\cdot m, $ and $m_{2}=\frac{1}{\sqrt{2\pi}}$. Then we use Young’s inequality to obtain for any $\epsilon>0$
\begin{align*}
\Vert v\Vert_{2}&\le(\epsilon^{2}m_{1}\Vert v_{\theta}\Vert_{2})^{\frac{1}{3}}\big(\frac{1}{\epsilon}m_{1}\Vert v\Vert_{1}\big)^{\frac{2}{3}}+m_{2}\Vert v\Vert_{1}\le\frac{1}{3}\epsilon^{2}m_{1}\Vert v_{\theta}\Vert_{2}
+\frac{2}{3}\big(\frac{m_{1}}{\epsilon}\Vert v\Vert_{1}\big)+m_{2}\Vert v\Vert_{1}.
\end{align*}
Hence we get
\begin{equation}
\Vert v\Vert^{2}_{2}\le\frac{2}{9}\epsilon^{4}m^{2}_{1}\Vert v_{\theta}\Vert^{2}_{2}+2\big(\frac{2m_{1}}{3\epsilon}+m_{2}\big)^{2}\Vert v\Vert^{2}_{1}.
\end{equation}
Combining this with \eqref{ome} we obtain
\begin{align*}
\frac{1}{2}\int^{2\pi}_{0}&\omega d\theta+\big(\frac{3}{4}-\frac{2}{9}\epsilon^{4}m^{2}_{1}\big)\int^{t}_{0}\int^{2\pi}_{0}\omega^{2}_{\theta}d\theta dt\\
&\le2\big(\frac{2m_{1}}{3\epsilon}+m_{2}\big)^{2}\int^{t}_{0}\bigg(\int^{2\pi}_{0}\omega d\theta\bigg)^{2}dt+\frac{1}{2}\int^{2\pi}_{0}\omega_{0} d\theta.
\end{align*}
Choosing $\epsilon>0$ such that $\frac{3}{4}-\frac{2}{9}\epsilon^{4}m^{2}_{1}>0$ and letting
$$m_{0}=4\big(\frac{2m_{1}}{3\epsilon}+m_{2}\big)^{2},$$
then one has
\begin{equation}\label{ome2}
\int^{2\pi}_{0}\omega d\theta\le m_{0}\int^{t}_{0}\bigg(\int^{2\pi}_{0}\omega d\theta\bigg)^{2}dt
+\int^{2\pi}_{0}\omega_{0} d\theta.
\end{equation}
Set $F(t)=\int^{t}_{0}\big(\int^{2\pi}_{0}\omega d\theta\big)^{2}dt$ and $c=\int^{2\pi}_{0}\omega_{0} d\theta=
\int^{2\pi}_{0}\kappa^{2}_{0}(\theta) d\theta$. Then we have $\frac{dF}{dt}=\big(\int^{2\pi}_{0}\omega d\theta\big)^{2},F(0)=0$, which together with \eqref{ome2} leads to
$$\big(\frac{dF}{dt}\big)^{\frac{1}{2}}\le m_{0}F(t)+c,\quad F(0)=0.$$
This implies
$$F(t)\le\frac{c^{2}t}{1-m_{0}ct}.$$
Let $t_{0}=\frac{1}{cm_{0}}=\frac{1}{m_{0}}\big(\int^{2\pi}_{0}\kappa^{2}_{0}(\theta) d\theta\big)^{-1},$ then for $t<t_{0}$, we have
$$\int^{t}_{0}\bigg(\int^{2\pi}_{0}\omega d\theta\bigg)^{2}dt=F(t)\le c^{2}M(t)t<c^{2}M(t)t_{0},$$
where $M(t)=\frac{1}{1-cm_{0}t}>0$ for $0\le t<t_{0}$, $M(t)\to\infty$ as $t\to t_{0}$. Taking this into \eqref{ome2} yields that for $0\le t<t_{0}$,
$$\int^{2\pi}_{0}\omega d\theta\le m_{0}F(t)+c\le c^{2}m_{0}M(t)t+c\le(cm_{0}t_{0}M(t)+1)c=(M(t)+1)c.$$
Note that $M(t)=\frac{1}{1-cm_{0}t}>0$ is increasing for $0\le t<t_{0}$, for any $t\le\frac{t_{0}}{2}$, $M(t)\le M(\frac{t_{0}}{2})=2$ and therefore for $0\le t\le\frac{t_{0}}{2}$,
$$\int^{2\pi}_{0}\kappa^{2}(\theta,t)d\theta=\int^{2\pi}_{0}\omega(\theta,t) d\theta\le3\int^{2\pi}_{0}\omega_{0}(\theta) d\theta=3\int^{2\pi}_{0}\kappa^{2}_{0}(\theta)d\theta.$$
\end{proof}

\begin{lemma}\label{k2tj}
We can find a constant $C$ depending only on the initial data such that
$$\int^{2\pi}_{0}\kappa^{2}_{\theta}d\theta\le C$$
holds for $t\in[0,\frac{t_{0}}{2}]$.
\end{lemma}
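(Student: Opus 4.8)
The plan is to derive a differential inequality for $y(t):=\int_{0}^{2\pi}\kappa_\theta^2\,d\theta$ on $[0,\tfrac{t_0}{2}]$ and then integrate it. Since $\theta$ is independent of $t$ under \eqref{fl2}, we may commute $\partial_t$ with $\partial_\theta$ and integrate by parts in $\theta$ using periodicity; substituting \eqref{kt} (and integrating the resulting term $\lambda\int\kappa\kappa_{\theta\theta}$ by parts once more) gives
\begin{equation*}
\frac{1}{2}\frac{d}{dt}\int_{0}^{2\pi}\kappa_\theta^2\,d\theta
=-\int_{0}^{2\pi}\kappa_{\theta\theta}\kappa_t\,d\theta
=-\int_{0}^{2\pi}(\kappa^2+\lambda)\kappa_{\theta\theta}^2\,d\theta
+2\lambda\int_{0}^{2\pi}\frac{\kappa_\theta^2\kappa_{\theta\theta}}{\kappa}\,d\theta
-\int_{0}^{2\pi}\kappa^3\kappa_{\theta\theta}\,d\theta
-\lambda\int_{0}^{2\pi}\kappa_\theta^2\,d\theta .
\end{equation*}
The first term on the right is a favourable dissipation term and the last is nonpositive, so only the two middle terms need genuine estimates.

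On $[0,\tfrac{t_0}{2}]$ three a priori bounds, all depending only on the initial data, are available: (i) the preserved-convexity estimate \eqref{kxj} gives $\kappa(\theta,t)\ge\kappa_{\min}(0)e^{-\mu t_0/2}=:c_1>0$, hence $\kappa^2+\lambda\ge c_1^2$ and $1/\kappa\le 1/c_1$; (ii) Remark~\ref{lajie} gives $0<\lambda(t)\le\pi/A(0)$; and (iii) \eqref{k2sj} gives $\int_{0}^{2\pi}\kappa^2\,d\theta\le 3\int_{0}^{2\pi}\kappa_0^2\,d\theta$. I would estimate the two bad terms by the Cauchy--Schwarz inequality against $\big(\int\kappa^2\kappa_{\theta\theta}^2\big)^{1/2}$, which produces, for any $\varepsilon>0$, an $\varepsilon$-small multiple of the dissipation integral $\int\kappa^2\kappa_{\theta\theta}^2\,d\theta$ together with multiples of $\int\kappa^4\,d\theta$ and $\int\kappa_\theta^4\,d\theta$ (using $1/\kappa\le 1/c_1$ and $\lambda\le\pi/A(0)$). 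Each of $\int\kappa^4\,d\theta$ and $\int\kappa_\theta^4\,d\theta$ is then controlled by the Nirenberg interpolation inequality on $S^1$ already used in the preceding lemma (applied to $\kappa$ and to $\kappa_\theta$, after subtracting the mean) followed by Young's inequality; with (iii) and $\int\kappa_{\theta\theta}^2\,d\theta\le c_1^{-2}\int\kappa^2\kappa_{\theta\theta}^2\,d\theta$ this yields
$$\int_{0}^{2\pi}\kappa^4\,d\theta\le\varepsilon\int_{0}^{2\pi}\kappa^2\kappa_{\theta\theta}^2\,d\theta+C(1+y),\qquad \int_{0}^{2\pi}\kappa_\theta^4\,d\theta\le\varepsilon\int_{0}^{2\pi}\kappa^2\kappa_{\theta\theta}^2\,d\theta+C(1+y^{3}).$$
Absorbing all the $\int\kappa^2\kappa_{\theta\theta}^2$ contributions into the dissipation term (for $\varepsilon$ small), one is left with a differential inequality
$$\frac{dy}{dt}\le C_1+C_2\,y^{3}\qquad\text{on }\ [0,\tfrac{t_0}{2}],$$
with $C_1,C_2$ depending only on the initial data.

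A continuity (bootstrap) argument then finishes the proof. Let $t^\ast$ be the largest time in $[0,\tfrac{t_0}{2}]$ with $y(s)\le 2y(0)+1$ for all $s\le t^\ast$. On $[0,t^\ast]$ the right-hand side of the last inequality is bounded by $C_3:=C_1+C_2(2y(0)+1)^3$, so $y(t^\ast)\le y(0)+C_3\,t^\ast\le y(0)+\tfrac12 C_3\,t_0$; since $t_0=\big[m_0\int_{0}^{2\pi}\kappa_0^2\,d\theta\big]^{-1}$ is inversely proportional to $\int_{0}^{2\pi}\kappa_0^2\,d\theta$ (and the constant $m_0$ of the preceding lemma may, if necessary, be taken larger, which only decreases $t_0$), the right-hand side stays strictly below $2y(0)+1$. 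Hence $t^\ast=\tfrac{t_0}{2}$, and $y\le 2y(0)+1=:C$ throughout $[0,\tfrac{t_0}{2}]$.

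The step I expect to be the main obstacle is making this bootstrap genuinely close. At this stage $\kappa$ is controlled only in $L^2$, and the term $\int\kappa_\theta^4\,d\theta$ is critical for the available dissipation: interpolation only gives $\int\kappa_\theta^4\,d\theta\lesssim\|\kappa_{\theta\theta}\|_2\|\kappa_\theta\|_2^{3}+\|\kappa_\theta\|_2^{4}$, and after absorbing the $\|\kappa_{\theta\theta}\|_2$ factor into the dissipation one is forced to pay a superlinear power of $y$, so the Gronwall inequality is genuinely nonlinear. To conclude on all of $[0,\tfrac{t_0}{2}]$ one must therefore check that this interval is short enough to preclude finite-time blow-up of the comparison ODE $z'=C_1+C_2z^{3}$, which is where the precise size of $t_0$ (equivalently of $m_0$) and a careful bookkeeping of the interpolation and Young constants come in. Everything else---the integrations by parts, the Cauchy--Schwarz splittings, and the one-dimensional interpolation estimates---is routine.
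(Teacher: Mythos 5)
You take a genuinely different route from the paper's. You differentiate $y(t)=\int_0^{2\pi}\kappa_\theta^2\,d\theta$ directly and try to close a nonlinear Gronwall inequality $y'\le C_1+C_2y^3$ on $[0,t_0/2]$, whereas the paper introduces the functional
$$E(t)=\int_0^{2\pi}\Big[\big(\kappa-\tfrac{\lambda}{\kappa}\big)^2-\big(\kappa-\tfrac{\lambda}{\kappa}\big)_\theta^2\Big]d\theta,$$
uses the algebraic identities $\big(\kappa-\tfrac{\lambda}{\kappa}\big)+\big(\kappa-\tfrac{\lambda}{\kappa}\big)_{\theta\theta}=\kappa_t/\kappa^2$ and $\big(\kappa-\tfrac{\lambda}{\kappa}\big)_t=\kappa_t(1+\lambda/\kappa^2)-\lambda_t/\kappa$ to obtain
$$E'(t)=2\int_0^{2\pi}\frac{\kappa_t^2}{\kappa^2}\Big(1+\frac{\lambda}{\kappa^2}\Big)d\theta-2\lambda_t\int_0^{2\pi}\frac{\kappa_t}{\kappa^3}\,d\theta\ge-\frac{2\pi\lambda_t^2}{\lambda^2},$$
and then integrates in $t$. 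The decisive advantage is that the leading term has a definite sign and the only error is a function of $\lambda$ alone, so no dissipation-absorption or nonlinear Gronwall step is needed; the resulting bound on $\int(\kappa-\lambda/\kappa)_\theta^2\,d\theta$, hence on $y$, is linear in the already-known quantities from \eqref{kxj}, \eqref{k2sj} and Remark~\ref{lajie}.

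Your proposal has a genuine gap at exactly the place you flag. After the interpolation and Young steps the best one can do from $\tfrac12 y'=-\int(\kappa^2+\lambda)\kappa_{\theta\theta}^2+2\lambda\int\kappa_\theta^2\kappa_{\theta\theta}/\kappa-\int\kappa^3\kappa_{\theta\theta}-\lambda y$ is indeed $y'\le C_1+C_2y^3$: the term $2\lambda\int\kappa_\theta^2\kappa_{\theta\theta}/\kappa$ is supercritical for the available dissipation and forces the cubic, as you note. The comparison ODE $z'=C_1+C_2z^3$ then blows up at a time $T^\ast$ comparable to $1/\big(C_2\,y(0)^2\big)$, so closing the bootstrap on $[0,t_0/2]$ requires $t_0/2=1/\big(2m_0\int\kappa_0^2\big)<T^\ast$. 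Since $y(0)=\int(\kappa_0')^2$ and $\int\kappa_0^2$ are independent features of the initial data, this can fail (take $\int(\kappa_0')^2\gg\int\kappa_0^2$) unless $m_0$ is allowed to grow with the ratio $\int(\kappa_0')^2/\int\kappa_0^2$. But $m_0$ was fixed in the preceding lemma as a universal constant coming only from the Nirenberg interpolation constants, and $t_0$ (as defined there) is the interval used in the subsequent estimates; silently enlarging $m_0$ changes $t_0$ and hence changes the statement being proved. So the assertion that ``the right-hand side stays strictly below $2y(0)+1$'' does not follow, and your proposed fix is not available. Your method does yield a bound on $y$ on \emph{some} positive time interval determined by the data, but not on the interval $[0,t_0/2]$ the lemma requires; to prove that you need the paper's monotone-energy device (or an equivalent sign-exploiting rearrangement of the same integrations by parts).
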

\begin{proof}
By the evolution equation of curvature $\kappa$, we have
\begin{align*}
\frac{d}{dt}\int^{2\pi}_{0}&\bigg[\big(\kappa-\frac{\lambda}{\kappa}\big)^{2}-\big(\kappa-\frac{\lambda}{\kappa}\big)^{2}_{\theta}\bigg]d\theta\\
=&\int^{2\pi}_{0}\bigg[2\big(\kappa-\frac{\lambda}{\kappa}\big)\big(\kappa-\frac{\lambda}{\kappa}\big)_{t}
-2\big(\kappa-\frac{\lambda}{\kappa}\big)_{\theta}\big(\kappa-\frac{\lambda}{\kappa}\big)_{\theta t}\bigg]d\theta\\
=&\int^{2\pi}_{0}\bigg[2\big(\kappa-\frac{\lambda}{\kappa}\big)\big(\kappa-\frac{\lambda}{\kappa}\big)_{t}
+2\big(\kappa-\frac{\lambda}{\kappa}\big)_{\theta\theta}\big(\kappa-\frac{\lambda}{\kappa}\big)_{t}\bigg]d\theta\\
=&2\int^{2\pi}_{0}\bigg[\big(\kappa-\frac{\lambda}{\kappa}\big)+\big(\kappa-\frac{\lambda}{\kappa}\big)_{\theta\theta}\bigg]
\big(\kappa-\frac{\lambda}{\kappa}\big)_{t}d\theta\\
=&2\int^{2\pi}_{0}\frac{\kappa_{t}}{\kappa^{2}}\big[\kappa_{t}-\big(\frac{\lambda}{\kappa}\big)_{t}\big]
=2\int^{2\pi}_{0}\frac{\kappa_{t}}{\kappa^{2}}\big(\kappa_{t}+\frac{\kappa_{t}\lambda}{\kappa^{2}}-\frac{\lambda_{t}}{\kappa}\big)d\theta\\
\ge&-2\lambda_{t}\int^{2\pi}_{0}\frac{\kappa_{t}}{\kappa^{3}}d\theta=-\frac{2\pi\lambda^{2}_{t}}{\lambda^{2}}
\ge-\frac{2\pi\lambda^{2}_{t}}{\lambda(0)}.
\end{align*}
By integration, we have
\begin{align*}
\int^{2\pi}_{0}\bigg[\big(\kappa-\frac{\lambda}{\kappa}\big)^{2}&-\big(\kappa-\frac{\lambda}{\kappa}\big)^{2}_{\theta}\bigg]d\theta
-\int^{2\pi}_{0}\bigg[\big(\kappa(\theta,0)-\frac{\lambda(0)}{\kappa(\theta,0)}\big)^{2}
-\big(\kappa(\theta,0)-\frac{\lambda(0)}{\kappa(\theta,0)}\big)^{2}_{\theta}\bigg]d\theta\\
&\ge-\frac{2\pi}{\lambda(0)}\int^{t}_{0}\lambda^{2}_{t}d\tau\triangleq-g(t).
\end{align*}
Note that $g(t)$ is increasing for $0\le t\le\frac{t_{0}}{2}$, then we have $g(t)\le g(\frac{t_{0}}{2})\triangleq\Omega$. Hence
\begin{align*}
\int^{2\pi}_{0}\big(\kappa-\frac{\lambda}{\kappa}\big)^{2}_{\theta}d\theta&\le
\int^{2\pi}_{0}\big(\kappa-\frac{\lambda}{\kappa}\big)^{2}d\theta-\int^{2\pi}_{0}\bigg[\big(\kappa(\theta,0)-\frac{\lambda(0)}{\kappa(\theta,0)}\big)^{2}
-\big(\kappa(\theta,0)-\frac{\lambda(0)}{\kappa(\theta,0)}\big)^{2}_{\theta}\bigg]d\theta+\Omega
\end{align*}
Letting $D=\max\{0,\Omega-\int^{2\pi}_{0}\bigg[\big(\kappa(\theta,0)-\frac{\lambda(0)}{\kappa(\theta,0)}\big)^{2}
-\big(\kappa(\theta,0)-\frac{\lambda(0)}{\kappa(\theta,0)}\big)^{2}_{\theta}\bigg]d\theta\}$ yields
\begin{equation}\label{kkk}
\int^{2\pi}_{0}\big(\kappa-\frac{\lambda}{\kappa}\big)^{2}_{\theta}d\theta\le
\int^{2\pi}_{0}\big(\kappa-\frac{\lambda}{\kappa}\big)^{2}d\theta+D.
\end{equation}
Therefore,
\begin{align*}
\int^{2\pi}_{0}\kappa^{2}_{\theta}d\theta&\le\int^{2\pi}_{0}\big(\kappa_{\theta}+\frac{\lambda\kappa_{\theta}}{\kappa^{2}}\big)^{2}d\theta
=\int^{2\pi}_{0}\big(\kappa-\frac{\lambda}{\kappa}\big)^{2}_{\theta}d\theta\le\int^{2\pi}_{0}\big(\kappa-\frac{\lambda}{\kappa}\big)^{2}d\theta+D\\
&\le\int^{2\pi}_{0}\kappa^{2}d\theta+\int^{2\pi}_{0}\frac{\lambda^{2}}{\kappa^{2}}d\theta+D.
\end{align*}
By combining \eqref{kxj} and \eqref{k2sj}, we obtain
$$\int^{2\pi}_{0}\kappa^{2}_{\theta}d\theta\le3\int^{2\pi}_{0}\kappa^{2}_{0}d\theta+\frac{\pi^{2}}{A^{2}(0)}\frac{e^{\mu t_{0}}}{\kappa^{2}_{\min}(0)}+D\triangleq C.$$
\end{proof}

\begin{lemma}
Let $\kappa_{\max}(t)=\sup\{\kappa(\theta,t)|\theta\in[0,2\pi]\}$, for all $t\in[0,\frac{t_{0}}{2}]$, there exists a constant $\tilde{M}$ depending only on the initial data such that
$$\kappa_{\max}\le\tilde{M}.$$
\end{lemma}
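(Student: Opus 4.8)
The plan is to upgrade the $L^{2}$ control of $\kappa$ and $\kappa_{\theta}$ already obtained in the previous two lemmas to an $L^{\infty}$ bound on $\kappa$, by the elementary one-dimensional Sobolev-type estimate on the circle $S^{1}$. Concretely, for $t\in[0,t_{0}/2]$ we already have at our disposal the bound \eqref{k2sj}, namely $\int_{0}^{2\pi}\kappa^{2}\,d\theta\le 3\int_{0}^{2\pi}\kappa_{0}^{2}\,d\theta$, together with $\int_{0}^{2\pi}\kappa_{\theta}^{2}\,d\theta\le C$ from Lemma~\ref{k2tj}, and both right-hand sides depend only on the initial data.

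First I would, for each fixed $t$, select a point $\theta_{1}=\theta_{1}(t)\in[0,2\pi]$ at which $\kappa(\cdot,t)$ attains its mean value, i.e. $\kappa(\theta_{1},t)=\frac{1}{2\pi}\int_{0}^{2\pi}\kappa(\theta,t)\,d\theta$; such a $\theta_{1}$ exists by continuity of $\kappa$ and the intermediate value theorem. By the Cauchy–Schwarz inequality this mean value is bounded by $\frac{1}{\sqrt{2\pi}}\big(\int_{0}^{2\pi}\kappa^{2}\,d\theta\big)^{1/2}\le\sqrt{\tfrac{3}{2\pi}}\,\big(\int_{0}^{2\pi}\kappa_{0}^{2}\,d\theta\big)^{1/2}$. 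Next, for an arbitrary $\theta\in[0,2\pi]$ I would write $\kappa(\theta,t)=\kappa(\theta_{1},t)+\int_{\theta_{1}}^{\theta}\kappa_{\phi}(\phi,t)\,d\phi$ and estimate the last integral, again by Cauchy–Schwarz, by $\int_{0}^{2\pi}|\kappa_{\theta}|\,d\theta\le\sqrt{2\pi}\,\big(\int_{0}^{2\pi}\kappa_{\theta}^{2}\,d\theta\big)^{1/2}\le\sqrt{2\pi C}$. Combining these gives $\kappa(\theta,t)\le\sqrt{\tfrac{3}{2\pi}}\,\big(\int_{0}^{2\pi}\kappa_{0}^{2}\,d\theta\big)^{1/2}+\sqrt{2\pi C}\triangleq\tilde{M}$, uniformly in $\theta$ and in $t\in[0,t_{0}/2]$, and $\tilde{M}$ depends only on the initial data. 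Taking the supremum over $\theta$ then yields $\kappa_{\max}(t)\le\tilde{M}$.

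As for the difficulty: the real work — producing uniform $L^{2}$ bounds for $\kappa$ and $\kappa_{\theta}$ on a time interval $[0,t_{0}/2]$ whose length is controlled solely by the initial data — has already been carried out in the two preceding lemmas, so what remains here is only the passage from $H^{1}(S^{1})$ control to $C^{0}(S^{1})$ control, and this presents no genuine obstacle. The only point that requires a little care is to verify that the resulting constant $\tilde{M}$ is honestly independent of $t$ on $[0,t_{0}/2]$, which is immediate since both input bounds are. (One could equally invoke the Nirenberg interpolation inequality recalled earlier in the section in place of the fundamental-theorem-of-calculus step, but the argument above is the most direct.)
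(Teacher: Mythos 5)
Your argument is correct and is essentially the same as the paper's: the paper simply invokes the one-dimensional Sobolev embedding $\kappa_{\max}^{2}\le\tilde{c}\int_{0}^{2\pi}(\kappa^{2}+\kappa_{\theta}^{2})\,d\theta$ together with \eqref{k2sj} and Lemma~\ref{k2tj}, whereas you prove that embedding by hand via the mean-value/fundamental-theorem-of-calculus plus Cauchy--Schwarz. Both arguments rest on exactly the same $L^{2}$ inputs and yield the same kind of bound depending only on the initial data.
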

\begin{proof}
By Lemma \ref{k2tj} and the Sobolev embedding theorem, one gets
$$\kappa^{2}_{\max}\le\tilde{c}\int^{2\pi}_{0}(\kappa^{2}+\kappa^{2}_{\theta})d\theta\le3\tilde{c}\int^{2\pi}_{0}\kappa^{2}_{0}
d\theta+\tilde{c}C\triangleq\tilde{M}^{2},$$
where $\tilde{c}$ is a constant.
\end{proof}
By mimicking Pan \cite{pp}, we can apply the classical Leray-Schauder fixed point theory to prove the short-time existence of the flow. Since this part is standard, we omit the details and just state:
\begin{theorem}
Suppose $\kappa(\theta,0)=\kappa_{0}(\theta)\in C^{2+\sigma}(S^{1})$ and there exist positive constants $D_{1}$ and $D_{2}$ such that $D_{1}\le\kappa_{0}(\theta)\le D_{2}$. Then there exists $T_{0}>0$ such that there are unique solutions $\kappa\in C^{2+\sigma,1+\frac{\sigma}{2}}(Q_{0}), L\in C^{1}([0,T_{0}])$ to the following initial value problem:
\begin{align*}
\frac{\partial\kappa}{\partial t}
&=\big(\kappa^{2}+\frac{2\pi}{\int^{2\pi}_{0}\frac{1}{\kappa^{2}}d\theta}\big)\kappa_{\theta\theta}-\frac{4\pi}{\kappa\int^{2\pi}_{0}\frac{1}{\kappa^{2}}d\theta}\kappa^{2}_{\theta}
+\kappa^{3}-\frac{2\pi}{\int^{2\pi}_{0}\frac{1}{\kappa^{2}}d\theta}\kappa,\\
\frac{dL}{dt}&=-\int^{2\pi}_{0}\kappa d\theta+\frac{2\pi L}{\int^{2\pi}_{0}\frac{1}{\kappa^{2}}d\theta},\\
\kappa(\theta,0)&=\kappa_{0}(\theta),\\
L(0)&=L_{0}=\int^{2\pi}_{0}\frac{d\theta}{\kappa_{0}(\theta)}>0,
\end{align*}
where $Q_{0}=S^{1}\times[0,T_{0}]$.
\end{theorem}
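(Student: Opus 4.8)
The plan is to solve the quasilinear equation for $\kappa$ by a standard fixed point argument and then recover $L$ by a single integration; two features of the system make this routine and explain why the paper only states it. First, $L$ does not appear on the right-hand side of the $\kappa$-equation, so it is enough to solve the parabolic PDE for $\kappa$ alone. Second, all the a priori estimates needed to run the fixed point argument on a short time interval have essentially been prepared in the lemmas above. Write the $\kappa$-equation as $\kappa_{t}=a[\kappa]\,\kappa_{\theta\theta}+b[\kappa]\,\kappa_{\theta}^{2}+c[\kappa]\,\kappa$ with
$$a[\kappa]=\kappa^{2}+\lambda[\kappa],\qquad b[\kappa]=-\frac{4\pi}{\kappa\int_{0}^{2\pi}\kappa^{-2}\,d\theta},\qquad c[\kappa]=\kappa^{2}-\lambda[\kappa],\qquad \lambda[\kappa]=\frac{2\pi}{\int_{0}^{2\pi}\kappa^{-2}\,d\theta}.$$
The hypothesis $D_{1}\le\kappa_{0}\le D_{2}$ makes the operator uniformly parabolic at $t=0$: $a[\kappa_{0}]\ge D_{1}^{2}>0$ and $\int_{0}^{2\pi}\kappa_{0}^{-2}\,d\theta\in[2\pi D_{2}^{-2},\,2\pi D_{1}^{-2}]$, so $\lambda[\kappa_{0}]$ is a finite positive constant; more generally, whenever $\tfrac12 D_{1}\le\kappa\le 2D_{2}$ the coefficient $a[\kappa]$ stays bounded below and $b[\kappa],c[\kappa],\lambda[\kappa]$ stay bounded above by constants depending only on $D_{1},D_{2}$.

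The fixed point scheme is the usual one for quasilinear parabolic equations, following \cite{pp}. Work in the parabolic Hölder space $B=C^{1+\sigma,\frac{1+\sigma}{2}}(Q_{0})$ with $Q_{0}=S^{1}\times[0,T_{0}]$, and for $v\in B$ with $v\ge\tfrac12 D_{1}$ let $\Phi(v)=\kappa$ be the unique solution of the \emph{linear} problem obtained by freezing the principal coefficient and the nonlocal term at $v$, namely $\kappa_{t}=a[v]\,\kappa_{\theta\theta}+\big(b[v]\,v_{\theta}\big)\kappa_{\theta}+c[v]\,\kappa$ with $\kappa(\cdot,0)=\kappa_{0}$. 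This problem is uniformly parabolic with $C^{\sigma,\sigma/2}$ coefficients, hence uniquely solvable in $C^{2+\sigma,1+\frac{\sigma}{2}}$ by the classical linear parabolic theory on the circle (no boundary, so no compatibility condition is needed); the solution operator is continuous, and compact because $C^{2+\sigma,1+\frac{\sigma}{2}}$ embeds compactly into $B$. To close the argument one needs a uniform a priori bound, along the usual continuation parameter, for every $v$ that is a genuine (unfrozen) solution — equivalently, a classical solution of \eqref{kt}. For such $v$ the maximum principle applied at spatial extrema of $\kappa$ (where the gradient term drops out) gives $\frac{d}{dt}\kappa_{\max}\le\kappa_{\max}^{3}$ and $\frac{d}{dt}\kappa_{\min}\ge-\lambda\kappa_{\min}\ge-\frac{\pi}{A(0)}\kappa_{\min}$, so on an interval $[0,T_{0}]$ whose length depends only on $D_{1},D_{2},A(0)$ one keeps $\tfrac12 D_{1}\le\kappa\le 2D_{2}$, hence uniform parabolicity; and then the energy estimates \eqref{k2sj}, Lemma \ref{k2tj} and the bound $\kappa_{\max}\le\tilde M$ already proved, together with parabolic $L^{p}$ and Schauder estimates, bootstrap to $\|\kappa\|_{C^{2+\sigma,1+\frac{\sigma}{2}}(Q_{0})}\le R$ depending only on the initial data. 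The Leray--Schauder fixed point theorem then produces $\kappa\in C^{2+\sigma,1+\frac{\sigma}{2}}(Q_{0})$ with $\kappa\ge\tfrac12 D_{1}>0$ solving the nonlocal equation on $Q_{0}$.

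With $\kappa$ in hand, $\frac{dL}{dt}=-\int_{0}^{2\pi}\kappa\,d\theta+2\pi L\big(\int_{0}^{2\pi}\kappa^{-2}\,d\theta\big)^{-1}$ is a linear ODE with continuous coefficients and $L(0)=L_{0}=\int_{0}^{2\pi}\kappa_{0}^{-1}\,d\theta>0$, hence uniquely solvable with $L\in C^{1}([0,T_{0}])$. Uniqueness of the pair $(\kappa,L)$ is a standard energy argument: if $\kappa_{1},\kappa_{2}$ solve the $\kappa$-equation on $Q_{0}$ with the same data, both belong to $C^{2+\sigma,1+\frac{\sigma}{2}}(Q_{0})$ and are bounded below by $\tfrac12 D_{1}$, so $a,b,c$ and $\lambda[\cdot]$ are Lipschitz along them; subtracting the equations, testing against $\kappa_{1}-\kappa_{2}$, integrating over $S^{1}$, integrating by parts and absorbing the good Dirichlet term yields $\frac{d}{dt}\|\kappa_{1}-\kappa_{2}\|_{L^{2}}^{2}\le C\|\kappa_{1}-\kappa_{2}\|_{L^{2}}^{2}$, so $\kappa_{1}\equiv\kappa_{2}$ by Gronwall, whence $L_{1}\equiv L_{2}$ from the ODE.

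The one point requiring real care — and the reason $T_{0}$ cannot be chosen freely — is the uniform, continuation-independent a priori $C^{2+\sigma}$ estimate for solutions of the frozen-coefficient equation: one must keep the curvature in the band $[\tfrac12 D_{1},2D_{2}]$ so that $a[\kappa]=\kappa^{2}+\lambda[\kappa]$ retains a fixed ellipticity constant, and then control the quasilinear gradient term $\kappa_{\theta}^{2}$ well enough to bootstrap the regularity. Both are handled on a short interval as indicated above (the former by the maximum principle applied to $\kappa$, the latter via the energy bounds of the preceding lemmas followed by parabolic regularity), and the nonlocal factor $\int_{0}^{2\pi}\kappa^{-2}\,d\theta$ causes no extra difficulty because it stays trapped between explicit positive constants throughout. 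Apart from this, the construction is the textbook Leray--Schauder scheme, which is why the details are omitted.
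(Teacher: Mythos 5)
The paper omits this proof entirely, pointing to Pan \cite{pp} and the classical Leray--Schauder scheme; your proposal is a correct and reasonably detailed fleshing-out of exactly that scheme (decouple $L$ from the $\kappa$-equation, freeze coefficients, invoke linear parabolic theory plus compact embedding, obtain a priori Hölder bounds from the maximum principle and the energy lemmas, apply Leray--Schauder, then integrate the linear ODE for $L$, and close with a Gronwall uniqueness argument), so it takes the same approach. One small slip worth flagging: in the lower barrier estimate you use $\lambda\le\pi/A(0)$, but that bound was derived from area preservation, which presupposes the very solution you are constructing. It is unnecessary: once $\kappa$ is confined to $[\tfrac12 D_1,\,2D_2]$, the definition $\lambda[\kappa]=2\pi\big(\int_0^{2\pi}\kappa^{-2}\,d\theta\big)^{-1}$ directly yields $D_1^2/4\le\lambda[\kappa]\le 4D_2^2$, which is non-circular and serves the maximum-principle argument just as well.
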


\section{Long-time existence of the flow}
In this section, we follow \cite{Gage-Hamilton,Jiang-Pan} to derive key estimates of the curvature of the evolving curve. Define the median curvature $\kappa^{\ast}$ as
\begin{equation}
\kappa^{\ast}(t)=\sup\{\alpha|\kappa(\theta,t)>\alpha\,\text{on some interval of length}\,\pi\}.
\end{equation}
For any convex closed curve in the plane, Proposition 4.3.2 in \cite{Gage-Hamilton} tells us that the inequality
$\kappa^{\ast}(t)<\frac{L}{A}$ holds. By \eqref{la}, we have
\begin{proposition}(Geometric estimate)
Under the flow \eqref{fl2}, $\kappa^{\ast}(t)$ is uniformly bounded,i.e.,
\begin{equation}\label{ge1}
\kappa^{\ast}(t)\le\frac{L(0)}{A(0)},\quad\forall t\in[0,T^{\ast}).
\end{equation}
\end{proposition}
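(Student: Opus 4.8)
The plan is to reduce the statement to two ingredients that are already in hand: the classical upper bound on the median curvature of an arbitrary convex closed curve, and the geometric monotonicity recorded in Lemma~\ref{llaa} and Remark~\ref{lajie}. First I would observe that the preserved-convexity lemma guarantees that for every fixed $t\in[0,T^{\ast})$ the curve $X(\cdot,t)$ is a closed strictly convex plane curve. This is exactly the hypothesis under which Proposition~4.3.2 of \cite{Gage-Hamilton} applies, and it gives, at that fixed time, the inequality $\kappa^{\ast}(t)<\frac{L(t)}{A(t)}$.

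Next I would feed in the conservation of enclosed area and the monotonicity of length. By \eqref{la} we have $A(t)\equiv A(0)$ and $\sqrt{4\pi A(0)}\le L(t)\le L(0)$ for all $t\in[0,T^{\ast})$, hence $\frac{L(t)}{A(t)}=\frac{L(t)}{A(0)}\le\frac{L(0)}{A(0)}$. Combining this with the previous display yields $\kappa^{\ast}(t)<\frac{L(0)}{A(0)}$ for every $t$, which in particular establishes the (non-strict) uniform bound \eqref{ge1}.

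There is no genuine analytic obstacle here; the estimate is purely geometric and does not use the precise form of the normal speed. The one point that must not be glossed over is that the cited proposition from \cite{Gage-Hamilton} really does apply at each time slice, and this is precisely what the preserved-convexity lemma secures: without convexity the quantity $\kappa^{\ast}(t)$ need not be controlled by $L/A$ at all. Once proved, \eqref{ge1} serves as the seed for the curvature bounds used in the long-time existence argument, since it controls $\kappa$ from above on a suitably large arc.
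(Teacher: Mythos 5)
Your proof is correct and follows exactly the route the paper takes: invoke Proposition~4.3.2 of \cite{Gage-Hamilton} to get $\kappa^{\ast}(t)<L(t)/A(t)$ at each time slice, then substitute the bounds from \eqref{la}. The only difference is that you explicitly flag the role of the preserved-convexity lemma in making the Gage--Hamilton estimate applicable, which the paper leaves implicit but is a sensible thing to record.
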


\begin{proposition}(Integral estimate)
If $\kappa^{\ast}(t)$ is bounded on $[0,T^{\ast})$, then $\int^{2\pi}_{0}\ln\kappa(\theta,t)d\theta$ is bounded on $[0,T^{\ast})$.
\end{proposition}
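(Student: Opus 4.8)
The plan is to bound $\int_{0}^{2\pi}\ln\kappa\,d\theta$ from below and from above separately. The lower bound is immediate: in the $\theta$–parametrization $ds=\frac{1}{\kappa}\,d\theta$, so $\int_{0}^{2\pi}\frac{1}{\kappa}\,d\theta=\oint ds=L(t)\le L(0)$ by Remark~\ref{lajie}, and since $\ln$ is concave, Jensen's inequality gives $\frac{1}{2\pi}\int_{0}^{2\pi}\ln\frac{1}{\kappa}\,d\theta\le\ln\big(\frac{1}{2\pi}\int_{0}^{2\pi}\frac{1}{\kappa}\,d\theta\big)=\ln\frac{L(t)}{2\pi}$, i.e. $\int_{0}^{2\pi}\ln\kappa\,d\theta\ge 2\pi\ln\frac{2\pi}{L(0)}$ uniformly in $t$. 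All the work is therefore in the upper bound.

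For the upper bound I would differentiate in time. Using the curvature equation \eqref{kt} and integrating by parts twice on $S^{1}$,
\[
\frac{d}{dt}\int_{0}^{2\pi}\ln\kappa\,d\theta=\int_{0}^{2\pi}\frac{\kappa_{t}}{\kappa}\,d\theta
=\int_{0}^{2\pi}\kappa^{2}\,d\theta-\int_{0}^{2\pi}\kappa_{\theta}^{2}\,d\theta-\lambda\int_{0}^{2\pi}\frac{\kappa_{\theta}^{2}}{\kappa^{2}}\,d\theta-2\pi\lambda
\le\int_{0}^{2\pi}\kappa^{2}\,d\theta-\int_{0}^{2\pi}\kappa_{\theta}^{2}\,d\theta,
\]
the last step using $\lambda>0$. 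The heart of the matter is then the interpolation inequality in which the hypothesis is used: if $\kappa^{*}(t)\le K$, then
\[
\int_{0}^{2\pi}\kappa^{2}\,d\theta\le\int_{0}^{2\pi}\kappa_{\theta}^{2}\,d\theta+2K\int_{0}^{2\pi}\kappa\,d\theta+2\pi K^{2}.
\]
To prove it, decompose the open set $\{\theta:\kappa(\theta,t)>K\}$ into its maximal open arcs $I_{i}$; the definition of $\kappa^{*}$ forces $\mathrm{length}(I_{i})\le\pi$. On each $I_{i}$ the function $\psi=\kappa-K$ vanishes at both endpoints, so the one–dimensional Poincar\'e inequality gives $\int_{I_{i}}\psi^{2}\le(\mathrm{length}(I_{i})/\pi)^{2}\int_{I_{i}}\psi_{\theta}^{2}\le\int_{I_{i}}\kappa_{\theta}^{2}$; then one expands $(\psi+K)^{2}$, estimates the cross term crudely by $\int_{I_{i}}\psi\le\int_{I_{i}}\kappa$ so that the coefficient of $\int\kappa_{\theta}^{2}$ stays exactly $1$, sums over $i$ (using $\sum_{i}\mathrm{length}(I_{i})\le2\pi$), and uses $\kappa^{2}\le K\kappa$ on the complementary set.

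Combining the two displays, $\frac{d}{dt}\int_{0}^{2\pi}\ln\kappa\,d\theta\le 2K\int_{0}^{2\pi}\kappa\,d\theta+2\pi K^{2}$. The final step is to recognise $\int_{0}^{2\pi}\kappa\,d\theta=\oint\kappa^{2}\,ds$ (since $d\theta=\kappa\,ds$), which by \eqref{lt} equals $\lambda L-\frac{dL}{dt}$; integrating in time and invoking $0<\lambda(t)\le\frac{\pi}{A(0)}$ and $L(t)\le L(0)$ from Remark~\ref{lajie} gives $\int_{0}^{t}\int_{0}^{2\pi}\kappa\,d\theta\,d\tau=L(0)-L(t)+\int_{0}^{t}\lambda L\,d\tau\le L(0)+\frac{\pi L(0)}{A(0)}t$. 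Hence $\int_{0}^{2\pi}\ln\kappa(\theta,t)\,d\theta\le\int_{0}^{2\pi}\ln\kappa_{0}(\theta)\,d\theta+2KL(0)+\big(\frac{2\pi KL(0)}{A(0)}+2\pi K^{2}\big)t$, which, together with the lower bound, is finite on any bounded time interval and in particular on $[0,T^{*})$.

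I expect the interpolation inequality to be the only genuinely delicate step: it is where the bound on $\kappa^{*}$ is actually exploited, and one must arrange the arc decomposition and the Poincar\'e constant so that the coefficient of $\int_{0}^{2\pi}\kappa_{\theta}^{2}\,d\theta$ is no larger than $1$ and thus disappears against the dissipation term $-\int_{0}^{2\pi}\kappa_{\theta}^{2}\,d\theta$ in the evolution of $\int\ln\kappa$; everything afterwards is routine bookkeeping with the already-proved monotonicity of $L$ and the boundedness of $\lambda$.
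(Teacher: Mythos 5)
Your proof is correct and follows essentially the same route as the paper's: the crux — controlling $\int_0^{2\pi}(-\kappa_\theta^2+\kappa^2)\,d\theta$ by decomposing $\{\kappa>\kappa^*\}$ into maximal arcs of length $\le\pi$, applying the Wirtinger/Poincar\'e inequality on each (where $\kappa^*$ is exploited), and then integrating in time via $\int_0^{2\pi}\kappa\,d\theta=\lambda L-L_t$ together with Remark~\ref{lajie} — is exactly the paper's argument; the paper's algebra on each arc gives $\le 2\kappa^*\int\kappa$ with no additive constant, but your extra $2\pi K^2$ is harmless. The one mild (and pleasant) difference is the lower bound: the paper invokes $\kappa\ge\kappa_{\min}(0)e^{-\mu t}$ from the convexity lemma, giving a bound that degrades linearly in $t$, whereas your Jensen argument from $\int_0^{2\pi}\frac{1}{\kappa}\,d\theta=L(t)\le L(0)$ yields a cleaner, time-uniform lower bound.
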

\begin{proof}
By \eqref{kxj}, we have for all $t>0$
\begin{align*}
\int^{2\pi}_{0}\ln\kappa(\theta,t)d\theta&\ge\int^{2\pi}_{0}\ln\kappa_{\min}(t)d\theta=2\pi\ln\kappa_{\min}(t)
\ge2\pi\ln[\kappa_{\min}(0)e^{-\mu t}]\\
&=2\pi\ln\kappa_{\min}(0)-\big(\frac{2\pi^{2}}{A(0)}+2\pi\big)t.
\end{align*}
On the other hand, from the evolution equation of curvature and integration by parts we obtain
\begin{align}
\frac{d}{dt}\int^{2\pi}_{0}\ln\kappa(\theta,t)d\theta&=\int^{2\pi}_{0}\bigg[\kappa\big(\kappa-\frac{\lambda}{\kappa}\big)_{\theta\theta}
+\kappa\big(\kappa-\frac{\lambda}{\kappa}\big)\bigg]d\theta\notag\\
&=-\int^{2\pi}_{0}\kappa_{\theta}\big(\kappa-\frac{\lambda}{\kappa}\big)_{\theta}d\theta+
\int^{2\pi}_{0}\kappa\big(\kappa-\frac{\lambda}{\kappa}\big)d\theta\notag\\
&=-\int^{2\pi}_{0}\kappa^{2}_{\theta}d\theta-\int^{2\pi}_{0}\frac{\lambda\kappa^{2}_{\theta}}{\kappa^{2}}d\theta+
\int^{2\pi}_{0}\kappa^{2}d\theta-2\pi\lambda\notag\\
\label{ge2}&\le\int^{2\pi}_{0}(-\kappa^{2}_{\theta}+\kappa^{2})d\theta.
\end{align}
The open set $V=\{\theta|\kappa(\theta,t)>\kappa^{\ast}(t)\}$ can be written uniquely as the union of a countable number of disjoint open intervals $I_{i}$, where the length of each interval must be less than or equal to $\pi$.
On each interval $I_{i}$, we have
\begin{align}
\int_{\bar{I}_{i}}(-\kappa^{2}_{\theta}+\kappa^{2})d\theta&=\int_{\bar{I}_{i}}\big[-(\kappa-\kappa^{\ast})^{2}_{\theta}+\kappa^{2}\big]d\theta
\le\int_{\bar{I}_{i}}\big[-(\kappa-\kappa^{\ast})^{2}+\kappa^{2}\big]d\theta\notag\\
\label{ge3}&=\int_{\bar{I}_{i}}\big[2\kappa\kappa^{\ast}-(\kappa^{\ast})^{2}\big]d\theta\le2\kappa^{\ast}\int_{\bar{I}_{i}}\kappa d\theta
\end{align}
where we have used the Wirtinger’s inequality. On the complement of $V$, $V^{c}$, we have the estimate $\kappa\le\kappa^{\ast}$ and
\begin{equation}\label{ge4}
\int_{V^{c}}(-\kappa^{2}_{\theta}+\kappa^{2})d\theta\le\kappa^{\ast}\int_{V^{c}}\kappa d\theta.
\end{equation}
Combining with \eqref{ge1}, \eqref{ge2}, \eqref{ge3} and \eqref{ge4}, we have
$$\frac{d}{dt}\int^{2\pi}_{0}\ln\kappa(\theta,t)d\theta\le2\kappa^{\ast}\int^{2\pi}_{0}\kappa d\theta\le\frac{2L(0)}{A(0)}\int^{2\pi}_{0}\kappa d\theta.$$
From the evolution equation of $L$, we obtain
$$\frac{d}{dt}\int^{2\pi}_{0}\ln\kappa(\theta,t)d\theta\le\frac{2L(0)}{A(0)}(L\lambda-L_{t})\le
\frac{2\pi L^{2}(0)}{A^{2}(0)}-\frac{2L(0)}{A(0)}L_{t}.$$
Integrating the above expression yields
\begin{align*}
\int^{2\pi}_{0}\ln\kappa(\theta,t)d\theta&\le\int^{2\pi}_{0}\ln\kappa_{0}(\theta)d\theta+\frac{2\pi L^{2}(0)}{A^{2}(0)}t-\frac{2L(0)}{A(0)}\big(L(t)-L(0)\big)\\
&\le\int^{2\pi}_{0}\ln\kappa_{0}(\theta)d\theta+\frac{2\pi L^{2}(0)}{A^{2}(0)}T^{\ast}+\frac{2L^{2}(0)}{A(0)}.
\end{align*}
The proof is done.
\end{proof}

\begin{lemma}\label{lnk1}
If $\int^{2\pi}_{0}\ln\kappa(\theta,t)d\theta$ is bounded on $[0,T^{\ast})$, then for any $\delta>0$ we can find a positive constant $C$ such that $\kappa(\theta,t)\le C$ except on intervals of length less than or equal to $\delta$.
\end{lemma}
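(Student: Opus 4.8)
The plan is to exploit the bound $B := \sup_{[0,T^\ast)}\int_0^{2\pi}\ln\kappa(\theta,t)\,d\theta$ together with the already-established lower bound $\kappa(\theta,t)\ge \kappa_{\min}(0)e^{-\mu t}$ from \eqref{kxj}. The key quantitative observation is elementary: if $\kappa$ were large, say $\kappa>C$, on a set $E_t\subset S^1$ of measure $|E_t|$, then on $E_t$ we have $\ln\kappa > \ln C$, so
\[
\int_{E_t}\ln\kappa\,d\theta \ge |E_t|\ln C,
\]
and hence
\[
B \;\ge\; \int_0^{2\pi}\ln\kappa\,d\theta \;=\; \int_{E_t}\ln\kappa\,d\theta + \int_{S^1\setminus E_t}\ln\kappa\,d\theta \;\ge\; |E_t|\ln C + (2\pi-|E_t|)\ln\bigl(\kappa_{\min}(0)e^{-\mu t}\bigr).
\]
Since $t<T^\ast$, the subtracted term is bounded below by $2\pi\bigl(\ln\kappa_{\min}(0) - \mu T^\ast\bigr) =: -B'$, a finite constant depending only on the initial data and $T^\ast$. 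Rearranging gives $|E_t|\ln C \le B + B'$, i.e. $|E_t| \le (B+B')/\ln C$ whenever $\ln C>0$.

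Thus, given $\delta>0$, I would simply choose $C = \exp\bigl((B+B')/\delta\bigr)$ (and without loss of generality $C\ge 1$ so that $\ln C\ge 0$); the displayed inequality then forces $|E_t| = |\{\theta:\kappa(\theta,t)>C\}| \le \delta$ for every $t\in[0,T^\ast)$, which is exactly the assertion that $\kappa(\theta,t)\le C$ except on a set — in fact a finite union of intervals, since the super-level sets of the continuous function $\kappa(\cdot,t)$ are open — of total length at most $\delta$. The constant $C$ depends only on $\delta$, the initial data, and the bound $B$, as required.

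The only point needing a word of care is the role of $T^\ast$: the lower bound for $\kappa$ degrades like $e^{-\mu t}$, so $B'$ (and hence $C$) depends on $T^\ast$. This is harmless for the intended application, where Lemma \ref{lnk1} is used on a fixed finite interval to bootstrap toward long-time existence, but it means the constant is not uniform in $T^\ast$; if a $T^\ast$-independent version were wanted one would instead feed in the sharper two-sided curvature control available once global existence is known. Beyond that, there is essentially no obstacle — the argument is a one-line application of Chebyshev's inequality to $\ln\kappa$, and the continuity of $\kappa$ upgrades "set of small measure" to "union of intervals of small total length."
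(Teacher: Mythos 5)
Your argument is correct and uses the same two ingredients as the paper's own proof — the uniform bound on $\int_0^{2\pi}\ln\kappa\,d\theta$ and the lower bound $\kappa\ge\kappa_{\min}(0)e^{-\mu t}$ from \eqref{kxj} — but packages them as a direct Chebyshev-type estimate on the measure of the super-level set $\{\kappa>C\}$, whereas the paper argues by contradiction against a single interval of length greater than $\delta$; your version thus yields the mildly stronger conclusion that the \emph{total} length of the bad set is at most $\delta$, which of course implies the paper's per-interval statement. One small inaccuracy worth tidying: the lower bound for $(2\pi-|E_t|)\ln\bigl(\kappa_{\min}(0)e^{-\mu t}\bigr)$ should be $\min\{0,\,2\pi(\ln\kappa_{\min}(0)-\mu T^{\ast})\}$ rather than $2\pi(\ln\kappa_{\min}(0)-\mu T^{\ast})$ outright, since the prefactor $2\pi-|E_t|\in[0,2\pi]$ only worsens the bound when the logarithmic factor is nonpositive; this is harmless and does not affect the argument, and your observation that $C$ necessarily depends on $T^{\ast}$ through the decay of the lower curvature bound matches the implicit dependence in the paper's proof as well.
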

\begin{proof}
The proof proceeds by contradiction. If $\kappa>C$ on $a\le\theta\le b$ and $b-a>\delta$, then
\begin{align*}
\int^{2\pi}_{0}\ln\kappa(\theta,t)d\theta&=\int^{b}_{a}\ln\kappa(\theta,t)d\theta+\int_{[0,2\pi]\setminus[a,b]}\ln\kappa(\theta,t)d\theta\\
&>(b-a)\ln C+\big(2\pi-(b-a)\big)\ln\kappa_{\min}(t)\\
&=2\pi\ln\kappa_{\min}(t)+(b-a)\big(\ln C-\ln\kappa_{\min}(t)\big)\\
&>2\pi\ln\kappa_{\min}(t)+\delta\big(\ln C-\ln\kappa_{\min}(t)\big)\\
&=\delta\ln C+(2\pi-\delta)\ln\kappa_{\min}(t)\\
&\ge\delta\ln C+(2\pi-\delta)\ln\kappa_{\min}(0)-(2\pi-\delta)\big(\frac{\pi}{A(0)}+1\big)t,
\end{align*}
which leads to a contradiction when $C$ is large enough.
\end{proof}

\begin{lemma}(Pointwise estimate)
If $\int^{2\pi}_{0}\ln\kappa(\theta,t)d\theta$ is bounded on $[0,T^{\ast})$, then $\kappa(\theta,t)$ is uniformly bounded on $S^{1}\times[0,T^{\ast})$.
\end{lemma}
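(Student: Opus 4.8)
The plan is to promote the conditional bound of Lemma~\ref{lnk1} --- ``$\kappa\le C$ off intervals of length $\le\delta$'' --- to a genuine $L^{\infty}$ bound by establishing uniform bounds for $\int_{0}^{2\pi}\kappa^{2}\,d\theta$ and $\int_{0}^{2\pi}\kappa_{\theta}^{2}\,d\theta$ on $[0,T^{\ast})$ and then invoking the Sobolev embedding $H^{1}(S^{1})\hookrightarrow C^{0}(S^{1})$. Two structural facts are used throughout, both valid because $T^{\ast}<\infty$: the curvature stays bounded below, $\kappa(\theta,t)\ge\kappa_{\min}(0)e^{-\mu T^{\ast}}>0$ by \eqref{kxj}; and the nonlocal speed stays bounded, $0<\lambda(t)\le\frac{\pi}{A(0)}$ by Remark~\ref{lajie}.

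\emph{Step 1: a uniform bound for $\int\kappa^{2}\,d\theta$.} Differentiating in $t$ and arguing as in the derivation of \eqref{ome} (before time-integration), discarding the manifestly nonpositive terms produced by $\lambda>0$, one gets
\begin{equation*}
\tfrac{1}{2}\frac{d}{dt}\int_{0}^{2\pi}\kappa^{2}\,d\theta\le-\tfrac{3}{4}\int_{0}^{2\pi}\bigl((\kappa^{2})_{\theta}\bigr)^{2}\,d\theta+\int_{0}^{2\pi}\kappa^{4}\,d\theta .
\end{equation*}
The gain comes from Lemma~\ref{lnk1}: for any $\delta>0$ there is a constant $C_{\delta}$, finite on $[0,T^{\ast})$ since $T^{\ast}<\infty$ and $\int\ln\kappa$ is bounded, with $\kappa^{2}\le C_{\delta}^{2}$ off a set of measure $\le\delta$; choosing a point $\theta_{0}$ of that set and writing $\kappa^{2}(\theta)=\kappa^{2}(\theta_{0})+\int_{\theta_{0}}^{\theta}(\kappa^{2})_{\theta}$ yields the interpolation $\int\kappa^{4}\,d\theta\le4\pi\delta\int\bigl((\kappa^{2})_{\theta}\bigr)^{2}\,d\theta+C(\delta)$. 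Taking $\delta<\tfrac{3}{16\pi}$ the bad term is absorbed, so $\frac{d}{dt}\int\kappa^{2}\le C$ and hence $\int_{0}^{2\pi}\kappa^{2}(\theta,t)\,d\theta\le\int_{0}^{2\pi}\kappa_{0}^{2}\,d\theta+C\,T^{\ast}$ on $[0,T^{\ast})$. Integrating the same inequality in time also gives $\int_{0}^{T^{\ast}}\!\!\int_{0}^{2\pi}\kappa^{2}\kappa_{\theta}^{2}\,d\theta\,dt<\infty$, and combining the bound on $\int\kappa^{2}$ with \eqref{ge2} integrated in $t$ also gives $\int_{0}^{T^{\ast}}\!\!\int_{0}^{2\pi}\kappa_{\theta}^{2}\,d\theta\,dt<\infty$; these two time-integrated bounds are what makes Step~2 work.

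\emph{Step 2: a uniform bound for $\int\kappa_{\theta}^{2}\,d\theta$.} From $\frac{d}{dt}\int\kappa_{\theta}^{2}=-2\int\kappa_{\theta\theta}\kappa_{t}$ and the evolution equation \eqref{kt}, integration by parts gives
\begin{equation*}
\frac{d}{dt}\int_{0}^{2\pi}\kappa_{\theta}^{2}\,d\theta=-2\int_{0}^{2\pi}(\kappa^{2}+\lambda)\kappa_{\theta\theta}^{2}\,d\theta+\frac{4\lambda}{3}\int_{0}^{2\pi}\frac{\kappa_{\theta}^{4}}{\kappa^{2}}\,d\theta+2\int_{0}^{2\pi}(3\kappa^{2}-\lambda)\kappa_{\theta}^{2}\,d\theta .
\end{equation*}
The term $-2\lambda\int\kappa_{\theta\theta}^{2}\le-2\lambda(0)\int\kappa_{\theta\theta}^{2}$ supplies a clean coercive term, the weight $\kappa^{-2}$ is harmless since $\kappa$ is bounded below, and $\lambda$ is bounded above; what remains is to dominate the supercritical quantities $\int\kappa_{\theta}^{4}\,d\theta$ and $\int\kappa^{2}\kappa_{\theta}^{2}\,d\theta$. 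The idea is to use the Gagliardo--Nirenberg/Agmon inequalities on $S^{1}$ (e.g.\ $\|\kappa_{\theta}\|_{\infty}^{2}\lesssim\|\kappa_{\theta}\|_{2}\|\kappa_{\theta\theta}\|_{2}$ and $\|(\kappa^{2})_{\theta}\|_{2}^{2}\le\|\kappa^{2}\|_{2}\|(\kappa^{2})_{\theta\theta}\|_{2}$) together with the bound on $\int\kappa^{2}$ from Step~1 to absorb the bulk of these terms into $-2\int(\kappa^{2}+\lambda)\kappa_{\theta\theta}^{2}$. This is the main technical obstacle: the naive estimate closes only to $y'\le-cy+Cy^{3}+C$ with $y=\int\kappa_{\theta}^{2}$, which permits finite-time blow-up, so one must feed in the time-integrated dissipation $\int_{0}^{T^{\ast}}\!\int\kappa_{\theta}^{2}\,dt<\infty$ (resp.\ $\int_{0}^{T^{\ast}}\!\int\kappa^{2}\kappa_{\theta}^{2}\,dt<\infty$) from Step~1 so that, after a further interpolation, the inequality becomes $y'\le a(t)\,y+b(t)$ with $a,b\in L^{1}(0,T^{\ast})$, whence $y$ is bounded; alternatively one replaces $\int\kappa_{\theta}^{2}$ by the more convenient combination $\int_{0}^{2\pi}\bigl[(\kappa-\tfrac{\lambda}{\kappa})^{2}-(\kappa-\tfrac{\lambda}{\kappa})_{\theta}^{2}\bigr]d\theta$ from the proof of Lemma~\ref{k2tj}, whose time derivative is $\ge-\tfrac{2\pi\lambda_{t}^{2}}{\lambda^{2}}$, once $\int_{0}^{T^{\ast}}\lambda_{t}^{2}\,dt$ is controlled. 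The bookkeeping mirrors the higher-order curvature estimates of Gage--Hamilton \cite{Gage-Hamilton} and Jiang--Pan \cite{Jiang-Pan}, the nonlocal factor $\lambda$ entering only through lower-order, sign-favourable terms.

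\emph{Step 3: conclusion.} With $\int_{0}^{2\pi}(\kappa^{2}+\kappa_{\theta}^{2})\,d\theta\le C$ uniformly on $[0,T^{\ast})$, either the Sobolev embedding $H^{1}(S^{1})\hookrightarrow C^{0}(S^{1})$, or, more directly, the elementary estimate $\kappa(\theta,t)\le\kappa^{\ast}(t)+\sqrt{\pi}\,\|\kappa_{\theta}(\cdot,t)\|_{L^{2}}$ (obtained by integrating $\kappa_{\theta}$ across a connected component of $\{\kappa>\kappa^{\ast}\}$, whose length is $\le\pi$) combined with \eqref{ge1}, yields $\kappa(\theta,t)\le\widetilde{C}$ on $S^{1}\times[0,T^{\ast})$, which is the assertion.
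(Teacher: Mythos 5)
Your strategy---establish a time-uniform $H^{1}(S^{1})$ bound on $\kappa$, then invoke Sobolev embedding---is genuinely different from the paper's, and the central step does not close. The paper never proves a uniform bound on $\int_{0}^{2\pi}\kappa_{\theta}^{2}\,d\theta$. Instead, on each bad interval $[a,b]$ of length $\le\delta$ it writes $\kappa-\lambda/\kappa$ via the fundamental theorem of calculus, applies Cauchy--Schwarz over the short interval, and then uses the \emph{non-uniform} estimate \eqref{kkk},
\begin{equation*}
\int_{0}^{2\pi}\big(\kappa-\tfrac{\lambda}{\kappa}\big)_{\theta}^{2}\,d\theta
\le\int_{0}^{2\pi}\big(\kappa-\tfrac{\lambda}{\kappa}\big)^{2}\,d\theta+D,
\end{equation*}
to produce the self-referential inequality \eqref{123}; bounding the right side crudely by $2\pi(\kappa_{\max}-\lambda/\kappa_{\max})^{2}+D$ puts the unknown $\kappa_{\max}$ on both sides, and the $\sqrt{\delta}$ prefactor lets it be absorbed once $2\pi\delta<1$. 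No separate $L^{2}$ or $H^{1}$ bound is required; the paper's proof is purely an absorption argument.

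In your proposal Step~1 is plausible, but Step~2 is a genuine gap: you acknowledge the naive differential inequality is $y'\le-cy+Cy^{3}+C$ and that this permits finite-time blow-up, and neither repair you sketch is carried out. Feeding in $\int_{0}^{T^{*}}\!\int\kappa_{\theta}^{2}\,dt<\infty$ does not obviously tame the cubic term---compare $y'=y^{3}$, $y(t)=(2(T^{*}-t))^{-1/2}$, whose time-integral is finite yet $y$ blows up---and your second suggestion, the Lyapunov quantity from Lemma~\ref{k2tj}, is left contingent on a bound for $\int_{0}^{T^{*}}\lambda_{t}^{2}\,dt$ that you do not supply. That Lyapunov quantity \emph{is} how the paper obtains \eqref{kkk}, but the paper does not then try to turn \eqref{kkk} into a constant $H^{1}$ bound; it carries the $\int(\kappa-\lambda/\kappa)^{2}$ term along as a function of $\kappa_{\max}$ and kills it with $\delta$-smallness. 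That absorption step is the idea your write-up is missing, and supplying it would in fact let you bypass Steps~1 and~2 entirely.
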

\begin{proof}
For any given $\delta$, by Lemma \ref{lnk1}, we have $\kappa\le C$ except on intervals $[a,b]$ of length less than $\delta$. On such an interval,
\begin{align*}
\kappa(\phi)-\frac{\lambda}{\kappa(\phi)}&=\kappa(a)-\frac{\lambda}{\kappa(a)}+\int^{\phi}_{a}\big(\kappa(\theta)
-\frac{\lambda}{\kappa(\theta)}\big)_{\theta}d\theta\\
&\le\kappa(a)-\frac{\lambda}{\kappa(a)}+\int^{\phi}_{a}\big\lvert\big(\kappa(\theta)
-\frac{\lambda}{\kappa(\theta)}\big)_{\theta}\big\lvert d\theta\\
&\le\kappa(a)-\frac{\lambda}{\kappa(a)}+\sqrt{\delta}\bigg[\int^{2\pi}_{0}\big(\kappa(\theta)
-\frac{\lambda}{\kappa(\theta)}\big)^{2}_{\theta}d\theta\bigg]^{\frac{1}{2}}\\
&\le C-\frac{\lambda}{C}+\sqrt{\delta}\bigg[\int^{2\pi}_{0}\big(\kappa-\frac{\lambda}{\kappa}\big)^{2}d\theta+D\bigg]^{\frac{1}{2}},
\end{align*}
where we have used the inequality \eqref{kkk}. Assume that $\kappa$ attains its maximum at $\phi$. Then one has
\begin{equation}\label{123}
\kappa_{\max}-\frac{\lambda}{\kappa_{\max}}\le C-\frac{\lambda}{C}+\sqrt{\delta}\big[2\pi\big(\kappa_{\max}-\frac{\lambda}{\kappa_{\max}}\big)^{2}+D\big]^{\frac{1}{2}}.
\end{equation}\label{223}
If $\kappa_{\max}\le C,$we have done; if $\kappa_{\max}> C,$ it is easy to obtain from \eqref{123} that
\begin{equation}\label{223}
(1-2\pi\delta)\kappa^{4}_{\max}+2\big(\frac{\lambda}{C}-C\big)\kappa^{3}_{\max}+\bigg[\big(\frac{\lambda}{C}-C\big)^{2}-\delta
(D-4\pi\lambda)-2\lambda\bigg]\kappa^{2}_{\max}-2\lambda\big(\frac{\lambda}{C}-C\big)\kappa_{\max}\le(2\pi\delta-1)\lambda^{2}.
\end{equation}
Choose $\delta$ small enough such that $1-2\pi\delta>0$, then there exists a positive constant $C^{\ast}$ such that $\kappa_{\max}<C^{\ast}$, or the left-hand side of \eqref{223} will tend to infinity when $\kappa_{\max}$ is large enough. Therefore $\kappa$ is uniformly bounded on $S^{1}\times[0,T^{\ast})$.
\end{proof}
Hence, we have the following theorem.
\begin{theorem}
If $\kappa:S^{1}\times[0,T^{\ast})\to\mathbb{R}$ satifies \eqref{kt}, then $\kappa$ is uniformly bounded on $S^{1}\times[0,T^{\ast})$.
\end{theorem}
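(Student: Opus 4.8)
The plan is to chain together the estimates already assembled in this section; since each piece is in place, the content is really a careful bookkeeping of which result feeds which. First I would note that, by the preserved-convexity lemma, the evolving curve $X(\cdot,t)$ stays strictly convex on $[0,T^{\ast})$, and that by Remark~\ref{lajie} we have $A(t)\equiv A(0)$ together with $\sqrt{4\pi A(0)}\le L(t)\le L(0)$. Convexity is exactly the hypothesis needed to apply Proposition 4.3.2 of \cite{Gage-Hamilton}, which combined with the length bound yields the Geometric estimate $\kappa^{\ast}(t)\le L(0)/A(0)$, uniformly in $t\in[0,T^{\ast})$.

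Second, with $\kappa^{\ast}$ bounded I would invoke the Integral estimate to conclude that $\int_{0}^{2\pi}\ln\kappa(\theta,t)\,d\theta$ remains bounded on $[0,T^{\ast})$: the lower bound is free from \eqref{kxj}, and the upper bound produced there depends on $T^{\ast}$ only through an additive multiple of $T^{\ast}$, hence is finite on any finite interval. Finally, feeding the $\ln\kappa$ bound into the Pointwise estimate lemma closes the argument --- Lemma~\ref{lnk1} confines the superlevel set $\{\kappa>C\}$ to intervals of length at most $\delta$, and the Cauchy--Schwarz/Sobolev estimate in that proof, powered by the $H^{1}$-type inequality \eqref{kkk} for $\kappa-\lambda/\kappa$ from Lemma~\ref{k2tj}, forces $\kappa_{\max}(t)\le C^{\ast}$ for a constant $C^{\ast}$ depending only on the initial data and $T^{\ast}$. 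Together with $\kappa(\theta,t)\ge\kappa_{\min}(0)e^{-\mu T^{\ast}}>0$ from \eqref{kxj}, this gives the claimed uniform (two-sided) bound on $S^{1}\times[0,T^{\ast})$.

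There is no genuine obstacle here; the only subtlety is logical ordering. One must establish preserved convexity \emph{before} using $\kappa^{\ast}<L/A$, then the $\int\ln\kappa$ bound, then the pointwise bound, in that sequence, since each step presupposes the previous one. It is worth emphasizing that none of the constants degenerate as $t\uparrow T^{\ast}$ --- they grow at worst polynomially in $T^{\ast}$ --- which is precisely what the long-time existence argument in the next section will need in order to continue the flow past $T^{\ast}$.
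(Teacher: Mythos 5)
Your proposal matches the paper's argument: the theorem is stated as an immediate consequence of chaining the Geometric estimate, the Integral estimate, and the Pointwise estimate, exactly as you outline, with the preserved-convexity lemma and Remark~\ref{lajie} supplying the needed hypotheses. One small inaccuracy in your closing remark: tracing through Lemma~\ref{lnk1} and the Pointwise estimate, the resulting bound on $\kappa_{\max}$ in fact grows exponentially in $T^{\ast}$ (since $\ln C$ must dominate a term linear in $T^{\ast}$), not polynomially, but this is immaterial for the theorem since $T^{\ast}$ is fixed and finite.
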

We will now use the estimates for the $L^{2}$ norms of the derivatives of $\kappa$ with respect to $\theta$ to show that under the assumption that $\kappa$ is bounded on $[0,T^{\ast})$, all higher derivatives of $\kappa$ with respect to $\theta$ are also bounded. To simplify the notation, we represent the partial derivatives with respect to $\theta$ by $"'"$, for example,
$$\kappa'=\frac{\partial\kappa}{\partial\theta},\quad\kappa''=\frac{\partial^{2}\kappa}{\partial\theta^{2}},\cdots.$$
\begin{lemma}
If $\kappa$ is bounded on $[0,T^{\ast}),$ then $\kappa'$ is also bounded on the same interval.
\end{lemma}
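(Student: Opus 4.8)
The plan is to bound $\int_0^{2\pi}(\kappa')^2\,d\theta$ (equivalently $\|\kappa'\|_2$) uniformly on $[0,T^{\ast})$, since then the bound on $\kappa'$ itself follows from the Sobolev embedding $W^{1,2}(S^1)\hookrightarrow C^0(S^1)$ applied to $\kappa'$, provided we also control $\|\kappa''\|_2$ — but in fact the cleaner route, and the one consistent with the earlier lemmas, is to first get $\int(\kappa')^2$ bounded and then feed that into an interpolation/Sobolev step. Actually the standard trick here (following Gage--Hamilton and Jiang--Pan) is to estimate a good combination: using $\kappa\ge\kappa_{\min}(t)\ge\kappa_{\min}(0)e^{-\mu t}$ is not available as a uniform lower bound on $[0,T^{\ast})$, so instead I will differentiate $\int_0^{2\pi}(\kappa')^2\,d\theta$ in time, integrate by parts, and use the uniform bound $\kappa\le\widetilde M$ (the previous theorem) together with $0<\lambda(t)\le \pi/A(0)$ (Remark~\ref{lajie}) to absorb all dangerous terms.

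Concretely, first I would recall $\kappa_t=(\kappa^2+\lambda)\kappa''-\frac{2\lambda}{\kappa}(\kappa')^2+\kappa^3-\lambda\kappa$ from \eqref{kt}. Then compute
\[
\frac{d}{dt}\int_0^{2\pi}(\kappa')^2\,d\theta=2\int_0^{2\pi}\kappa'\kappa'_t\,d\theta=-2\int_0^{2\pi}\kappa''\kappa_t\,d\theta,
\]
substitute the expression for $\kappa_t$, and obtain a leading term $-2\int(\kappa^2+\lambda)(\kappa'')^2\,d\theta$, which is the good negative term, plus lower-order terms involving $\int \frac{\lambda}{\kappa}\kappa''(\kappa')^2$, $\int\kappa^3\kappa''$, $\int\lambda\kappa\kappa''$. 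Each of these I would estimate by Cauchy--Schwarz / Young's inequality, pulling out a small multiple of $\int(\kappa'')^2$ to be absorbed by the good term, at the cost of terms of the form $C\int(\kappa')^4$, $C\int\kappa^6$, etc. The quartic term $\int(\kappa')^4$ is handled by the one-dimensional Gagliardo--Nirenberg inequality $\|\kappa'\|_4^4\le C\|\kappa''\|_2\|\kappa'\|_2^3+C\|\kappa'\|_2^4$ (or the periodic analogue), again absorbing the $\|\kappa''\|_2$ factor. After all absorptions one is left with a differential inequality of the form
\[
\frac{d}{dt}\int_0^{2\pi}(\kappa')^2\,d\theta\le C_1\int_0^{2\pi}(\kappa')^2\,d\theta+C_2,
\]
where $C_1,C_2$ depend only on $\widetilde M$, $\pi/A(0)$ and the already-established bounds; Gronwall's inequality then gives $\int_0^{2\pi}(\kappa')^2\,d\theta\le C(T^{\ast})$ on $[0,T^{\ast})$, and finally Sobolev embedding on $S^1$ (using both this bound and, if needed, a further round to bound $\|\kappa''\|_2$) yields the pointwise bound on $\kappa'$.

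The main obstacle is the bookkeeping in the absorption step: one must be careful that the coefficient of the good term $\int(\kappa^2+\lambda)(\kappa'')^2\ge \lambda(0)\int(\kappa'')^2>0$ genuinely dominates all the $\epsilon$-weighted $\int(\kappa'')^2$ contributions coming from Young's inequality applied to the cubic and the $\frac{\lambda}{\kappa}(\kappa')^2\kappa''$ terms — note that $1/\kappa$ is not bounded above a priori without the time-dependent lower bound \eqref{kxj}, so one should use $\frac{\lambda}{\kappa}\le \lambda\kappa_{\min}(0)^{-1}e^{\mu t}$, which is finite on any $[0,T^{\ast})$, or better, rewrite $\frac{2\lambda}{\kappa}(\kappa')^2\kappa''=\frac{2\lambda}{3}\big((\kappa')^3/\kappa\big)'+\text{(good sign terms)}$ so that after integration by parts the troublesome factor disappears. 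Ensuring the constants depend only on the initial data and $T^{\ast}$ (and not on any quantity that could blow up as $t\to T^{\ast}$) is the only real subtlety; everything else is routine integration by parts and interpolation.
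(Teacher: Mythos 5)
Your approach is genuinely different from the paper's. The paper differentiates the evolution equation \eqref{kt} with respect to $\theta$, sets $W=e^{\mu t}\kappa'$, and runs a pointwise maximum/minimum principle on the resulting quasilinear parabolic equation for $W$ (choosing $\mu$ appropriately so that a first interior extremum of $W$ leads to a contradiction). You instead propose an $L^2$ energy estimate for $\int_0^{2\pi}(\kappa')^2\,d\theta$ followed by interpolation, Gronwall, and a Sobolev step. That is a legitimate alternative strategy in principle, but as written there is a concrete gap.

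The gap sits exactly where you already flagged the ``main obstacle.'' Differentiating $\int(\kappa')^2$ and integrating by parts gives
\[
\frac{d}{dt}\int_0^{2\pi}(\kappa')^2\,d\theta
=-2\int_0^{2\pi}(\kappa^2+\lambda)(\kappa'')^2\,d\theta
+4\lambda\int_0^{2\pi}\frac{(\kappa')^2\kappa''}{\kappa}\,d\theta
+6\int_0^{2\pi}\kappa^2(\kappa')^2\,d\theta
-2\lambda\int_0^{2\pi}(\kappa')^2\,d\theta.
\]
Your proposed rewrite of the second term as $\frac{2\lambda}{3}\big((\kappa')^3/\kappa\big)'$ plus good-sign terms does not make ``the troublesome factor disappear''. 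Indeed
\[
\frac{(\kappa')^2\kappa''}{\kappa}=\frac{1}{3}\left(\frac{(\kappa')^3}{\kappa}\right)'+\frac{1}{3}\,\frac{(\kappa')^4}{\kappa^2},
\]
so that
\[
4\lambda\int_0^{2\pi}\frac{(\kappa')^2\kappa''}{\kappa}\,d\theta=\frac{4\lambda}{3}\int_0^{2\pi}\frac{(\kappa')^4}{\kappa^2}\,d\theta\ \ge\ 0,
\]
which has the \emph{wrong} sign. After bounding $1/\kappa$ below via \eqref{kxj} and applying the $1$D Gagliardo--Nirenberg inequality $\|\kappa'\|_{4}^{4}\le C\|\kappa''\|_2\|\kappa'\|_2^3$ (plus Young to absorb $\|\kappa''\|_2^2$ into the good term), the residual is of order $\|\kappa'\|_2^{6}$, not $\|\kappa'\|_2^{2}$. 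The differential inequality you would actually obtain is therefore of Riccati type, schematically $y'\le C_1 y^{3}+C_2 y+C_3$ with $y=\|\kappa'\|_2^2$, which a priori can blow up before $T^{\ast}$; the asserted linear Gronwall bound does not follow. Separately, even granting a uniform bound on $\int(\kappa')^2$, a pointwise bound on $\kappa'$ via Sobolev embedding on $S^1$ also requires controlling $\int(\kappa'')^2$, and the ``further round'' you invoke would meet the same quadratic-gradient obstruction one derivative higher. The paper's maximum-principle route sidesteps the $\int(\kappa')^4$ term entirely, which is precisely why that device is used here.
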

\begin{proof}
By \eqref{kt}, we have
\begin{equation*}
\frac{\partial^{2}\kappa}{\partial t\partial\theta}=\big(\frac{\partial\kappa}{\partial t}\big)'=\big(\kappa^{2}+\lambda\big)\kappa'''+2\kappa\kappa'\kappa''-\frac{4\lambda}{\kappa}\kappa'\kappa''
+\frac{2\lambda}{\kappa^{2}}(\kappa')^{3}+3\kappa^{2}\kappa'-\lambda\kappa'.
\end{equation*}
Let $W=e^{\mu t}\kappa'$. Then one has
\begin{equation}\label{34}
 W_{t}=\mu e^{\mu t}\kappa'+e^{\mu t}(\kappa')_{t}=\big(\kappa^{2}+\lambda\big)W''+\big(2\kappa-\frac{4\lambda}{\kappa}\big)W'We^{-\mu t}
+ \big(\frac{2\lambda(\kappa')^{2}}{\kappa^{2}}+3\kappa^{2}-\lambda+\mu\big)W.
\end{equation}
Let $W_{\min}(t)=\inf\{W(\theta,t)|\theta\in[0,2\pi]\}$ and suppose by contradiction that there exists an $\xi$, satisfying $\xi<W_{\min}(0)=\kappa'_{\min}(0)\le0$, such that $W_{\min}(t)=\xi$ fo some $t>0$. Let $t^{\ast}=\inf\{t|W_{\min}(t)=\xi\}$. Then the continuity of $W$ assures that this minimum $\xi$ is achieved for the first time at $(\theta^{\ast},t^{\ast})$, where we have
$$W_{t}\le0,\quad W''\ge0,\quad W'=0,\quad W=\xi<0,$$
which contradicts that $W$ satisfies \eqref{34} if we choose $\mu$ to be a large negative number. Therefore we obtain for $t>0$
$$W_{\min}(t)\ge W_{\min}(0),$$
which implies $\kappa'(\theta,t)\ge\kappa'_{\min}(0)e^{-\mu T^{\ast}}$. Similarly, Let $W_{\max}(t)=\sup\{W(\theta,t)|\theta\in[0,2\pi]\}$, we can obtain that $W_{\max}(t)\le W_{\max}(0)$ for all $t>0$, i.e., $\kappa'(\theta,t)\le\kappa'_{\max}(0)e^{-\mu t}\le\kappa'_{\max}(0)$.

To sum up , $\kappa'$ remains bounded on $[0,T^{\ast})$.
\end{proof}

\begin{lemma}\label{k23}
If $\kappa$ and $\kappa'$ are are both bounded on $[0,T^{\ast})$, then $\int^{2\pi}_{0}(\kappa'')^{4}d\theta$ is bounded.
\end{lemma}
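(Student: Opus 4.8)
The plan is a weighted $L^{4}$ energy estimate for $\kappa'':=\kappa_{\theta\theta}$, built on the curvature equation \eqref{kt}. First I would differentiate \eqref{kt} twice in $\theta$ to obtain an equation of the schematic form
\[
(\kappa'')_{t}=(\kappa^{2}+\lambda)\,\kappa''''+b_{1}\,\kappa'''+b_{2}\,(\kappa'')^{2}+b_{3}\,\kappa''+b_{4},
\]
where $b_{1},\dots ,b_{4}$ are explicit expressions in $\kappa$, $\kappa'$, $1/\kappa$ and $\lambda$. Under the standing hypothesis ($\kappa$ and $\kappa'$ bounded on $[0,T^{\ast})$), together with the lower bound $\kappa\ge\kappa_{\min}(0)e^{-\mu t}>0$ from \eqref{kxj} and $0<\lambda\le \pi/A(0)$ from Remark \ref{lajie}, each $b_{i}$ is bounded on $[0,T^{\ast})$, and the leading coefficient $\kappa^{2}+\lambda$ is bounded above and below by a positive constant $c_{0}$, so the equation is uniformly parabolic.

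Next I would compute, using this equation and integrating by parts in the top-order term,
\[
\tfrac14\tfrac{d}{dt}\int_{0}^{2\pi}(\kappa'')^{4}\,d\theta=\int_{0}^{2\pi}(\kappa'')^{3}(\kappa'')_{t}\,d\theta=-3\int_{0}^{2\pi}(\kappa^{2}+\lambda)(\kappa'')^{2}(\kappa''')^{2}\,d\theta+(\text{lower order}),
\]
which produces the good dissipation term $-3\int(\kappa^{2}+\lambda)(\kappa'')^{2}(\kappa''')^{2}\le-3c_{0}\int(\kappa'')^{2}(\kappa''')^{2}$. After collecting powers of $\kappa''$, every ``lower order'' contribution is of one of the types $\int(\mathrm{bdd})(\kappa'')^{4}$, $\int(\mathrm{bdd})(\kappa'')^{3}$, $\int(\mathrm{bdd})\,\kappa''$, $\int(\mathrm{bdd})(\kappa'')^{3}\kappa'''$ --- except for a single quintic term $\int_{0}^{2\pi}b\,(\kappa'')^{5}\,d\theta$ with $b$ bounded, coming both from $b_{2}(\kappa'')^{2}$ and from differentiating the coefficient $\kappa^{2}+\lambda$.

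The crux is this quintic term, which cannot be absorbed by interpolation alone: a Gagliardo--Nirenberg bound on $\int(\kappa'')^{5}$ feeds back a superlinear quantity and the differential inequality fails to close. Instead I would integrate by parts once more, writing $(\kappa'')^{5}=(\kappa'')^{4}(\kappa')'$, which gives
\[
\int_{0}^{2\pi}b\,(\kappa'')^{5}\,d\theta=-\int_{0}^{2\pi}b'\kappa'\,(\kappa'')^{4}\,d\theta-4\int_{0}^{2\pi}b\,\kappa'\,(\kappa'')^{3}\kappa'''\,d\theta.
\]
Since $b'\kappa'$ is bounded the first integral is $\le C\int(\kappa'')^{4}$, and since $b\kappa'$ is bounded, Young's inequality gives $\bigl|4\int b\kappa'(\kappa'')^{3}\kappa'''\bigr|\le\varepsilon\int(\kappa'')^{2}(\kappa''')^{2}+C_{\varepsilon}\int(\kappa'')^{4}$, whose first piece is absorbed into the dissipation term for $\varepsilon$ small; the terms $\int(\mathrm{bdd})(\kappa'')^{3}\kappa'''$ are disposed of the same way, and the cubic terms by the pointwise inequality $|\kappa''|^{3}\le\tfrac34(\kappa'')^{4}+\tfrac14$ (alternatively one first proves a bound on $\int(\kappa'')^{2}$ by the same argument one derivative lower). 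This yields $\tfrac{d}{dt}\int_{0}^{2\pi}(\kappa'')^{4}\,d\theta\le C_{1}\int_{0}^{2\pi}(\kappa'')^{4}\,d\theta+C_{2}$, and Gronwall's inequality gives the claimed bound on $[0,T^{\ast})$ (with $T^{\ast}<\infty$), the constant depending only on $T^{\ast}$ and the a priori bounds for $\kappa$, $\kappa'$, $\lambda$. The one real obstacle is thus the bookkeeping of the twice-differentiated equation plus the treatment of this top-power term: it must be integrated by parts so the extra $\kappa'''$ it generates is soaked up by the weighted dissipation $\int(\kappa^{2}+\lambda)(\kappa'')^{2}(\kappa''')^{2}$, and for that it is essential that $\kappa'$ be bounded pointwise, not merely in $L^{2}$.
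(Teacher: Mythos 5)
Your proof is correct and reaches the same conclusion as the paper, but it is organized differently in a way that creates (and then resolves) an extra difficulty. The paper starts from $\frac{d}{dt}\int(\kappa'')^4 = 4\int(\kappa'')^3(\kappa_t)''\,d\theta$ and \emph{immediately} integrates by parts once across the whole integrand, producing $-12\int(\kappa'')^2\kappa'''(\kappa_t)'\,d\theta$; since $(\kappa_t)'$ contains $\kappa''$ only to the first power, the worst terms that appear are $\int(\mathrm{bdd})(\kappa'')^3\kappa'''$ and $\int(\mathrm{bdd})(\kappa'')^2\kappa'''$, which are absorbed into the dissipation $-12\int(\kappa^2+\lambda)(\kappa'')^2(\kappa''')^2$ by Peter--Paul, with no quintic term ever arising. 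You instead expand $(\kappa'')_t$ from the twice-differentiated equation and integrate by parts only in the top-order $\kappa''''$ term; the coefficient $b_2=2\kappa-\tfrac{4\lambda}{\kappa}$ of $(\kappa'')^2$ then produces the quintic $\int b(\kappa'')^5$, which you correctly eliminate by one more integration by parts $\int b(\kappa'')^5=-\int b'\kappa'(\kappa'')^4-4\int b\kappa'(\kappa'')^3\kappa'''$, after which Young's inequality closes the estimate as in the paper. Both routes give a differential inequality of Gronwall type and hence exponential-in-time control of $\int(\kappa'')^4$ on any finite $[0,T^{\ast})$. One small inaccuracy: differentiating the leading coefficient $\kappa^2+\lambda$ in the integration by parts produces a term $-2\int\kappa\kappa'(\kappa'')^3\kappa'''$, of type $\int(\mathrm{bdd})(\kappa'')^3\kappa'''$, \emph{not} a quintic $\int(\kappa'')^5$; the only genuine quintic comes from $b_2(\kappa'')^2$. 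This does not affect the validity of your argument, only the bookkeeping remark. The paper's single global integration by parts is the more economical route, but your version, with the extra step, arrives at the same estimate.
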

\begin{proof}
We compute
\begin{align*}
\frac{d}{dt}\int^{2\pi}_{0}(\kappa'')^{4}d\theta=&4\int^{2\pi}_{0}(\kappa'')^{3}\big[(\kappa^{2}+\lambda)\kappa''
-\frac{2\lambda}{\kappa}(\kappa')^{2}+\kappa^{3}-\lambda\kappa\big]'' d\theta\\
=&-12\int^{2\pi}_{0}(\kappa'')^{2}\kappa'''\big[(\kappa^{2}+\lambda)\kappa'''+2\kappa\kappa'\kappa''+\frac{2\lambda}{\kappa^{2}}
(\kappa')^{3}-\frac{4\lambda}{\kappa}\kappa'\kappa''+3\kappa^{2}\kappa'-\lambda\kappa'\big]d\theta\\
=&12\bigg[-\int^{2\pi}_{0}(\kappa^{2}+\lambda)(\kappa'')^{2}(\kappa''')^{2}d\theta-\int^{2\pi}_{0}\big(2\kappa\kappa'-
\frac{4\lambda}{\kappa}\kappa'\big)(\kappa'')^{3}\kappa'''d\theta\\
&-\int^{2\pi}_{0}\big[\frac{2\lambda}{\kappa^{2}}(\kappa')^{3}+3\kappa^{2}\kappa'-\lambda\kappa'\big](\kappa'')^{2}\kappa'''d\theta\bigg].
\end{align*}
By the bounds of $\kappa,\kappa'$ and $\lambda$ and the Peter-Paul inequality: for any $\epsilon>0$ there holds $ab\le\epsilon a^{2}+\frac{b^{2}}{4\epsilon},$ we have
\begin{align*}
-\int^{2\pi}_{0}\big(2\kappa\kappa'-\frac{4\lambda}{\kappa}\kappa'\big)(\kappa'')^{3}\kappa'''d\theta&\le c'_{1}\int^{2\pi}_{0}(\kappa'')^{4}d\theta+\int^{2\pi}_{0}\frac{c''_{1}}{4\epsilon_{1}}(\kappa'')^{2}(\kappa''')^{2}d\theta,\\
-\int^{2\pi}_{0}\big[\frac{2\lambda}{\kappa^{2}}(\kappa')^{3}+3\kappa^{2}\kappa'-\lambda\kappa'\big](\kappa'')^{2}\kappa'''d\theta
&\le c'_{2}\int^{2\pi}_{0}(\kappa'')^{2}d\theta+\int^{2\pi}_{0}\frac{c''_{2}}{4\epsilon_{2}}(\kappa'')^{2}(\kappa''')^{2}d\theta,
\end{align*}
where $\epsilon_{1}, \epsilon_{2}$ are positive constants, and
\begin{align*}
c'_{1}&=\epsilon_{1}c''_{1},\quad c''_{1}=\max\{(2\kappa\kappa'-\frac{4\lambda}{\kappa}\kappa')^{2}\},\quad c'_{2}=\epsilon_{2}c''_{2},\\
c''_{2}&=\max\big\{\big[\frac{2\lambda}{\kappa^{2}}(\kappa')^{3}+3\kappa^{2}\kappa'-\lambda\kappa'\big]^{2}\big\}.
\end{align*}
We also have
$$-\int^{2\pi}_{0}(\kappa^{2}+\lambda)(\kappa'')^{2}(\kappa''')^{2}d\theta\le-c'_{3}\int^{2\pi}_{0}
(\kappa'')^{2}(\kappa''')^{2}d\theta,$$
where $c'_{3}=\min\{\kappa^{2}+\lambda\}$. Choose $\epsilon_{1}, \epsilon_{2}$ appropriately such that $\frac{c''_{1}}{4\epsilon_{1}}+\frac{c''_{2}}{4\epsilon_{2}}=c'_{3}.$ Therefore,
\begin{align*}
\frac{d}{dt}\int^{2\pi}_{0}(\kappa'')^{4}d\theta&\le12\bigg[c'_{1}\int^{2\pi}_{0}(\kappa'')^{4}d\theta+c'_{2}\int^{2\pi}_{0}(\kappa'')^{2}d\theta\bigg]\\
&\le12c'_{1}\int^{2\pi}_{0}(\kappa'')^{4}d\theta+12c'_{2}\cdot\sqrt{2\pi}\bigg[\int^{2\pi}_{0}(\kappa'')^{4}d\theta\bigg]^{\frac{1}{2}}\\
&=c_{1}\int^{2\pi}_{0}(\kappa'')^{4}d\theta+c_{2}\bigg[\int^{2\pi}_{0}(\kappa'')^{4}d\theta\bigg]^{\frac{1}{2}},
\end{align*}
where $c_{1}=12c'_{1}, c_{2}=12\sqrt{2\pi}c'_{2}$. We see that $\int^{2\pi}_{0}(\kappa'')^{4}d\theta$ grows at most exponentially and therefore remains finite on a finite interval.
\end{proof}

\begin{lemma}
If $\kappa, \kappa'$ and $\int^{2\pi}_{0}(\kappa'')^{4}d\theta$ are bounded, then so is $\int^{2\pi}_{0}(\kappa''')^{2}d\theta$.
\end{lemma}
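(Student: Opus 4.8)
The plan is to run the $L^{2}$-energy estimate for the next derivative, exactly in the spirit of the proof of Lemma~\ref{k23}, one order higher. Differentiating the curvature equation \eqref{kt} three times in $\theta$ and testing against $\kappa'''$ gives, with $P:=\kappa^{2}+\lambda$,
\[
\frac{d}{dt}\int_{0}^{2\pi}(\kappa''')^{2}\,d\theta
=2\int_{0}^{2\pi}\kappa'''\Big[(P\kappa'')'''+\Big(-\tfrac{2\lambda}{\kappa}(\kappa')^{2}+\kappa^{3}-\lambda\kappa\Big)'''\Big]\,d\theta .
\]
Expanding the brackets, the only term carrying a fifth $\theta$-derivative is $\kappa'''\,P\,\kappa'''''$, and every other term is a finite sum of integrals $\int_{0}^{2\pi}a(\theta)\,(\text{monomial in }\kappa',\kappa'',\kappa''',\kappa'''')\,d\theta$ in which the coefficient $a$ is a polynomial in $\kappa,\kappa',1/\kappa,\lambda$; by \eqref{kxj}, the already-established uniform upper bound on $\kappa$, and $0<\lambda\le\pi/A(0)$ from Remark~\ref{lajie}, such $a$ is uniformly bounded on $S^{1}\times[0,T^{\ast})$, and so is $a'$ up to a harmless term linear in $\kappa''$ with bounded coefficient. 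Moreover $\int_{0}^{2\pi}(\kappa'')^{2}\,d\theta\le(2\pi)^{1/2}\big(\int_{0}^{2\pi}(\kappa'')^{4}\,d\theta\big)^{1/2}$ is bounded by hypothesis. I would then integrate the fifth-derivative term by parts once,
\[
2\int_{0}^{2\pi}\kappa'''\,P\,\kappa'''''\,d\theta
=-2\int_{0}^{2\pi}P\,(\kappa'''')^{2}\,d\theta-4\int_{0}^{2\pi}\kappa\kappa'\,\kappa'''\kappa''''\,d\theta ,
\]
and keep $-2\int_{0}^{2\pi}P(\kappa'''')^{2}\,d\theta\le-2c\int_{0}^{2\pi}(\kappa'''')^{2}\,d\theta$, with $c:=\min_{S^{1}\times[0,T^{\ast})}(\kappa^{2}+\lambda)>0$, as the sole coercive term onto which everything carrying a $\kappa''''$ or $\kappa'''''$ will be pushed.

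The remaining integrals are of a few types that I would dispatch in turn. A term $\int a\,\kappa'''\kappa''''\,d\theta$ is split by the Peter-Paul inequality into $\varepsilon\int(\kappa'''')^{2}+C_{\varepsilon}\int(\kappa''')^{2}$. A term $\int a\,(\kappa''')^{2}\,d\theta$ is $\le C\int(\kappa''')^{2}$. A term $\int a\,(\kappa'')^{2}\kappa'''\,d\theta=\tfrac13\int a\big((\kappa'')^{3}\big)'\,d\theta=-\tfrac13\int a'(\kappa'')^{3}\,d\theta$ is, since $a'$ is at most linear in $\kappa''$ with bounded coefficients, bounded by $C\big(\int(\kappa'')^{4}+\int(\kappa'')^{3}\big)\le C$. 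A term $\int a\,\kappa''\kappa'''\,d\theta$ is $\le C\|\kappa''\|_{2}\|\kappa'''\|_{2}\le\tfrac12\int(\kappa''')^{2}+C$, and a term $\int a\,\kappa'''\,d\theta$ (coefficient involving only $\kappa',1/\kappa,\lambda$) is $\le C\|\kappa'''\|_{2}\le\tfrac12\int(\kappa''')^{2}+C$. After collecting, everything is therefore bounded by $\varepsilon\int(\kappa'''')^{2}+C_{\varepsilon}\big(\int(\kappa''')^{2}+1\big)$, except for the contributions that combine into one term of the form $\int_{0}^{2\pi}b(\theta)\,\kappa''(\kappa''')^{2}\,d\theta$ with $b$ bounded; this is the only place in the computation that is genuinely delicate, since it cannot be removed by any further integration by parts.

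For that term I would use Agmon's inequality $\|\kappa''\|_{\infty}\le C\|\kappa''\|_{2}^{1/2}\|\kappa'''\|_{2}^{1/2}$ (valid because $\kappa''$ has zero mean on $S^{1}$) together with the interpolation $\|\kappa'''\|_{2}\le\|\kappa''\|_{2}^{1/2}\|\kappa''''\|_{2}^{1/2}$ (obtained by integrating $\int(\kappa''')^{2}=-\int\kappa''\kappa''''$ by parts); since $\|\kappa''\|_{2}$ is bounded this yields
\[
\Big|\int_{0}^{2\pi}b\,\kappa''(\kappa''')^{2}\,d\theta\Big|
\le C\,\|\kappa''\|_{\infty}\,\|\kappa'''\|_{2}^{2}
\le C\,\|\kappa'''\|_{2}^{5/2}
\le C\,\|\kappa''''\|_{2}^{5/4},
\]
and, since $5/4<2$, Young's inequality bounds the right-hand side by $\varepsilon\|\kappa''''\|_{2}^{2}+C_{\varepsilon}$. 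Choosing the finitely many $\varepsilon$'s so small that their sum is $<2c$, the coercive term absorbs every $\int(\kappa'''')^{2}$ that has appeared and one arrives at
\[
\frac{d}{dt}\int_{0}^{2\pi}(\kappa''')^{2}\,d\theta\le C_{1}\int_{0}^{2\pi}(\kappa''')^{2}\,d\theta+C_{2},
\]
with $C_{1},C_{2}$ depending only on the previously established bounds for $\kappa,\kappa',\lambda$ and $\int_{0}^{2\pi}(\kappa'')^{4}\,d\theta$. By Gr\"onwall's inequality $\int_{0}^{2\pi}(\kappa''')^{2}\,d\theta$ grows at most exponentially, hence stays finite on every finite time interval, in particular on $[0,T^{\ast})$. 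The main obstacle in this plan is precisely the bookkeeping that isolates $\int b\,\kappa''(\kappa''')^{2}\,d\theta$ as the unique term not controlled directly by the $C^{0}$-bounds on $\kappa,\kappa',\lambda$ and by the $L^{4}$-bound on $\kappa''$, together with checking that Agmon/interpolation and Young make it absorbable into the single coercive integral $\int(\kappa'''')^{2}$; the rest is routine.
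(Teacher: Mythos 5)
Your proposal is correct, but it does not quite reproduce the paper's computation. The paper first integrates by parts the whole pairing: writing the right-hand side of \eqref{kt} as $G$, it passes from $2\int_0^{2\pi}\kappa'''G'''\,d\theta$ directly to $-2\int_0^{2\pi}\kappa^{(4)}G''\,d\theta$ and only then expands $G''$. The payoff is that \emph{every} non-coercive term comes out carrying a single factor $\kappa^{(4)}$:
\begin{equation*}
-\int_0^{2\pi}(\kappa^2+\lambda)(\kappa^{(4)})^2\,d\theta
-\int_0^{2\pi}a_1\,\kappa'''\kappa^{(4)}\,d\theta
-\int_0^{2\pi}a_2\,(\kappa'')^2\kappa^{(4)}\,d\theta
-\int_0^{2\pi}a_3\,\kappa''\kappa^{(4)}\,d\theta
-\int_0^{2\pi}a_4\,\kappa^{(4)}\,d\theta,
\end{equation*}
with bounded coefficients $a_i$. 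Each of the last four integrals is then dispatched by a single Peter--Paul split, putting the quadratic half onto $\int(\kappa^{(4)})^2$ and leaving $\int(\kappa''')^2$, $\int(\kappa'')^4$, $\int(\kappa'')^2$ or constants, all of which are controlled by the hypotheses; there is no further integration by parts, no Agmon, and no interpolation. In your version you expand $G'''$ \emph{before} integrating by parts and only move the $\kappa^{(5)}$-term, so you are left with lower-order products of $\kappa'''$, most problematically $\int b\,\kappa''(\kappa''')^2\,d\theta$. You correctly identify this as the one genuinely nontrivial term and close it with Agmon's inequality, the $\|\kappa'''\|_2\le\|\kappa''\|_2^{1/2}\|\kappa^{(4)}\|_2^{1/2}$ interpolation, and Young's inequality in the regime $5/4<2$ — that chain is sound. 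So your argument is valid and reaches the same Gr\"onwall-type differential inequality, but it is strictly more work than the paper's: by performing the integration by parts one step earlier the authors guarantee that $\kappa^{(4)}$ appears linearly in every remaining term, so the whole estimate closes with Peter--Paul alone and the delicate $\kappa''(\kappa''')^2$ integral never arises.
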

\begin{proof}
By a direct computation, we have
\begin{align*}
\frac{d}{dt}\int^{2\pi}_{0}(\kappa''')^{2}d\theta=&2\int^{2\pi}_{0}\kappa'''\big[(\kappa^{2}+\lambda)\kappa''
-\frac{2\lambda}{\kappa}(\kappa')^{2}+\kappa^{3}-\lambda\kappa\big]''' d\theta\\
=&2\bigg[-\int^{2\pi}_{0}(\kappa^{2}+\lambda)(\kappa^{(4)})^{2}d\theta-\int^{2\pi}_{0}(4\kappa\kappa'-\frac{4\lambda}{\kappa}\kappa')
\kappa'''\kappa^{(4)}d\theta\\
&-\int^{2\pi}_{0}(2\kappa-\frac{4\lambda}{\kappa})(\kappa'')^{2}\kappa^{(4)}d\theta
-\int^{2\pi}_{0}\big(6\kappa(\kappa')^{2}-\frac{4\lambda}{\kappa^{3}}(\kappa')^{4}\big)\kappa^{(4)}d\theta\\
&-\int^{2\pi}_{0}\big(2(\kappa')^{2}+\frac{10\lambda}{\kappa^{2}}(\kappa')^{2}+3\kappa^{2}-\lambda\big)\kappa''\kappa^{(4)}d\theta\bigg].
\end{align*}
By adopting the same trick as in the proof of Lemma \ref{k23} to bound the last four terms, we can prove that
$\int^{2\pi}_{0}(\kappa''')^{2}d\theta$ grows at most exponentially.
\end{proof}

\begin{corollary}
Under the same hypothesis as above, $\kappa''$ is bounded.
\end{corollary}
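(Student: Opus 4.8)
The plan is to read off the pointwise bound for $\kappa''$ from the two integral bounds already available: on one hand $\int_0^{2\pi}(\kappa'')^2\,d\theta$ is bounded, since by H\"older's inequality $\int_0^{2\pi}(\kappa'')^2\,d\theta \le \sqrt{2\pi}\,\big(\int_0^{2\pi}(\kappa'')^4\,d\theta\big)^{1/2}$ and the right-hand side is bounded by Lemma \ref{k23}; on the other hand $\int_0^{2\pi}(\kappa''')^2\,d\theta$ is bounded by the preceding lemma. Thus $\kappa''(\cdot,t)$ lies in a bounded set of $H^1(S^1)$, uniformly in $t$, and the one-dimensional Sobolev embedding $H^1(S^1)\hookrightarrow C^0(S^1)$ yields the claim. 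To keep the argument self-contained I would phrase the embedding step explicitly rather than quote it.

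Concretely, I would proceed as follows. Since $\kappa(\cdot,t)$ is a function on $S^1$, its derivative $\kappa'(\cdot,t)$ is $2\pi$-periodic, so
\begin{equation*}
\int_0^{2\pi}\kappa''(\theta,t)\,d\theta=\kappa'(2\pi,t)-\kappa'(0,t)=0,
\end{equation*}
and hence for each $t$ there is some $\theta_0=\theta_0(t)\in S^1$ with $\kappa''(\theta_0,t)=0$. Then for every $\theta$,
\begin{equation*}
\kappa''(\theta,t)^2=\int_{\theta_0}^{\theta}\frac{\partial}{\partial\phi}\big(\kappa''(\phi,t)^2\big)\,d\phi
=2\int_{\theta_0}^{\theta}\kappa''\kappa'''\,d\phi
\le 2\bigg(\int_0^{2\pi}(\kappa'')^2\,d\theta\bigg)^{1/2}\bigg(\int_0^{2\pi}(\kappa''')^2\,d\theta\bigg)^{1/2},
\end{equation*}
by the Cauchy--Schwarz inequality, and the right-hand side is bounded on $[0,T^{\ast})$ by the remarks above. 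Taking the supremum over $\theta$ and $t$ gives a uniform bound $|\kappa''|\le C$ on $S^1\times[0,T^{\ast})$.

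I do not expect any genuine obstacle here: all the ingredients were established in the immediately preceding lemmas, and the step is the routine interpolation/embedding device used throughout the bootstrap. The only point worth stating carefully is the existence of a zero of $\kappa''$ on $S^1$, which comes for free from the periodicity of $\kappa'$. The same scheme iterates verbatim --- bounding $\int_0^{2\pi}(\kappa^{(k)})^2\,d\theta$ and $\int_0^{2\pi}(\kappa^{(k+1)})^2\,d\theta$ and using the zero of $\kappa^{(k)}$ forced by periodicity of $\kappa^{(k-1)}$ --- to produce uniform pointwise bounds on all $\theta$-derivatives of $\kappa$, which is exactly what Section 5 will require for the $C^\infty$ convergence.
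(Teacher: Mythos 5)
Your argument is correct and is essentially the same as the paper's: the paper also bounds $\kappa''$ by appealing to the one-dimensional Sobolev embedding $\max|f|^2 \le C\int(|f'|^2 + f^2)\,d\theta$ applied to $f = \kappa''$, using the bounds on $\int(\kappa'')^2\,d\theta$ (via H\"older from Lemma \ref{k23}) and $\int(\kappa''')^2\,d\theta$. The only difference is cosmetic: you re-derive the embedding from scratch by exploiting the zero of $\kappa''$ guaranteed by periodicity, whereas the paper simply quotes the inequality.
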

\begin{proof}
Recall that for any one-dimensional smooth function $f$, there holds
$$\max\lvert f\lvert^{2}\le C\int \lvert f'\lvert^{2}+f^{2}.$$
We apply this to $\kappa''$ to get the desired result.
\end{proof}

\begin{lemma}
If $\kappa, \kappa'$ and $\kappa''$ are uniformly bounded, then so are $\kappa'''$ and all the higher derivatives of $\kappa$.
\end{lemma}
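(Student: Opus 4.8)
The plan is to argue by induction on the order of the derivative, simply continuing the bootstrap already performed in the preceding lemmas. Fix a finite time $T^{\ast}$. The induction hypothesis at stage $m$ is that $\kappa,\kappa',\dots,\kappa^{(m)}$ are all uniformly bounded on $S^{1}\times[0,T^{\ast})$; the case $m=2$ is precisely the hypothesis of the present lemma. It then suffices to show that under this hypothesis $\kappa^{(m+1)}$ is also uniformly bounded, and to iterate.

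First I would record the evolution equation satisfied by $\kappa^{(m+1)}$, obtained by differentiating \eqref{kt} $(m+1)$ times in $\theta$. Since \eqref{kt} is quasilinear with principal part $(\kappa^{2}+\lambda)\kappa_{\theta\theta}$, a Leibniz-rule computation gives
\begin{equation*}
\partial_{t}\kappa^{(m+1)}=(\kappa^{2}+\lambda)\,\kappa^{(m+3)}+a\,\kappa^{(m+2)}+R,
\end{equation*}
where $a$ is an explicit function of $\kappa,\kappa'$ and $\lambda$ (hence bounded, with bounded $\theta$-derivative under the induction hypothesis), and $R$ is a polynomial in $\kappa,\kappa',\dots,\kappa^{(m+1)}$ whose coefficients are smooth bounded functions of $\kappa$ and $\lambda$. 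The only structural facts one needs — and the only point requiring careful bookkeeping — are that no derivative of order higher than $m+3$ appears, that $\kappa^{(m+3)}$ and $\kappa^{(m+2)}$ each appear only linearly, and that every coefficient not involving $\kappa^{(m+1)}$ is already under control by the induction hypothesis and by the bound $0<\lambda\le\pi/A(0)$ from Remark~\ref{lajie}.

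Next I would run two energy estimates, in exact analogy with Lemma~\ref{k23} and the lemma following it. For the first, differentiate $\int_{0}^{2\pi}(\kappa^{(m+1)})^{4}\,d\theta$ in $t$, insert the evolution equation, and integrate by parts so that the top-order term produces the favourable quantity $-12\int_{0}^{2\pi}(\kappa^{2}+\lambda)(\kappa^{(m+1)})^{2}(\kappa^{(m+2)})^{2}\,d\theta$. Every remaining term is estimated by the Peter--Paul inequality: the factors of $\kappa^{(m+2)}$ are absorbed into this negative term, and what is left is bounded by $c_{1}\int_{0}^{2\pi}(\kappa^{(m+1)})^{4}\,d\theta+c_{2}\big(\int_{0}^{2\pi}(\kappa^{(m+1)})^{4}\,d\theta\big)^{1/2}+c_{3}$ with constants depending only on the already established bounds. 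Gronwall's inequality then shows that $\int_{0}^{2\pi}(\kappa^{(m+1)})^{4}\,d\theta$ grows at most exponentially, hence is bounded on $[0,T^{\ast})$. For the second estimate I would treat $\int_{0}^{2\pi}(\kappa^{(m+2)})^{2}\,d\theta$ the same way: its time derivative, after integration by parts, contains the good term $-2\int_{0}^{2\pi}(\kappa^{2}+\lambda)(\kappa^{(m+3)})^{2}\,d\theta$, and all other terms are controlled by Peter--Paul together with the bound on $\int_{0}^{2\pi}(\kappa^{(m+1)})^{4}\,d\theta$ just obtained; this again yields at most exponential growth, hence a bound on $[0,T^{\ast})$.

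Finally, the one-dimensional interpolation inequality $\max|f|^{2}\le C\big(\int_{0}^{2\pi}|f'|^{2}\,d\theta+\int_{0}^{2\pi}f^{2}\,d\theta\big)$ applied to $f=\kappa^{(m+1)}$, combined with the $L^{2}$ bound on $\kappa^{(m+2)}$ and the H\"older estimate $\int_{0}^{2\pi}(\kappa^{(m+1)})^{2}\,d\theta\le\sqrt{2\pi}\big(\int_{0}^{2\pi}(\kappa^{(m+1)})^{4}\,d\theta\big)^{1/2}$, delivers the uniform bound on $\kappa^{(m+1)}$ and closes the induction. The main obstacle is entirely organizational: verifying that repeated differentiation of \eqref{kt} preserves the quasilinear structure displayed above (linear in the two highest derivatives, polynomial with bounded coefficients otherwise), so that the Peter--Paul absorption works at every stage; once that is checked, each individual estimate is a routine repetition of arguments already given.
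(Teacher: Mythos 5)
Your argument is correct, but it takes a genuinely different route from the paper's. You continue the energy-estimate bootstrap of Lemma~\ref{k23} and the two results that follow it: bound $\int_0^{2\pi}(\kappa^{(m+1)})^4\,d\theta$, then $\int_0^{2\pi}(\kappa^{(m+2)})^2\,d\theta$, then invoke the one-dimensional Sobolev inequality to get a pointwise bound on $\kappa^{(m+1)}$, and iterate. The paper instead differentiates \eqref{kt} $l$ times and reads the result as a linear, uniformly parabolic equation for $\kappa^{(l)}$,
\[
\partial_t\kappa^{(l)}=(\kappa^2+\lambda)\kappa^{(l+2)}+a_l\,\kappa^{(l+1)}+b_l\,\kappa^{(l)}+f\bigl(\kappa,\dots,\kappa^{(l-1)}\bigr),
\]
whose coefficients $a_l,b_l$ and inhomogeneous term $f$ are all controlled by the induction hypothesis; it then applies the maximum principle to $\kappa^{(l)}e^{\mu t}$ for a suitably chosen $\mu$, obtaining a pointwise bound on $\kappa^{(l)}$ in one stroke without ever passing through $L^2$ or $L^4$ norms of $\kappa^{(l+1)}$. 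What each approach buys: the paper's maximum-principle argument is shorter (one step per order rather than two Gronwall inequalities plus a Sobolev embedding), while yours is more uniform with the preceding lemmas — it is literally the same estimate applied at the next order — and avoids having to handle the inhomogeneous term $f$ inside a comparison argument. Both hinge on the same structural observation you flag as the ``main obstacle,'' namely that repeated $\theta$-differentiation of \eqref{kt} keeps the principal part $(\kappa^2+\lambda)\kappa^{(l+2)}$, keeps the next-order term linear, and produces lower-order terms whose coefficients depend only on $\kappa,\kappa',\lambda$. One small imprecision: since $R$ is linear in $\kappa^{(m+1)}$ (for $m\ge1$), the term $\int R(\kappa^{(m+1)})^3\,d\theta$ also produces a contribution of size $\bigl(\int(\kappa^{(m+1)})^4\,d\theta\bigr)^{3/4}$, not only the $1/2$-power you record; this does not affect the Gronwall conclusion.
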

\begin{proof}
Compute,
\begin{align*}
\frac{\partial\kappa'''}{\partial t}=&(\kappa^{2}+\lambda)\kappa^{(5)}+\big(6\kappa\kappa'-\frac{4\lambda}{\kappa}\kappa'\big)\kappa^{(4)}
+\big(8\kappa\kappa''+6(\kappa')^{2}+\frac{14\lambda}{\kappa^{2}}(\kappa')^{2}-\frac{12\lambda}{\kappa}\kappa''
+3\kappa^{2}-\lambda\big)\kappa'''\\
&-6\kappa'(\kappa'')^{2}+\frac{12\lambda}{\kappa^{4}}(\kappa')^{5}-\frac{36\lambda}{\kappa^{3}}(\kappa')^{3}\kappa''
+\frac{24\lambda}{\kappa^{2}}\kappa'(\kappa'')^{2}+6(\kappa')^{3}+18\kappa\kappa'\kappa''.
\end{align*}
Since $\kappa, \kappa', \kappa''$ and $\lambda$ are bounded, the maximum principle can be applied to
 $\kappa'''e^{\mu t}$ for suitably choosing $\mu$. This implies that $\kappa'''$ is bounded.

Generally, if $\kappa, \kappa', \cdots, \kappa^{(l-1)}$ are bounded, then
\begin{align*}
\frac{\partial\kappa^{(l)}}{\partial t}=&\big(\kappa^{2}+\lambda\big)\kappa^{(l+2)}+\big(2l\kappa\kappa'-\frac{4\lambda}{\kappa}\kappa'\big)\kappa^{(l+1)}
+\big(2\kappa\kappa''+l(l-1)\kappa\kappa''+l(l-1)(\kappa')^{2}-\frac{4l\lambda}{\kappa}\kappa''\\
&+\frac{2(2l+1)\lambda}{\kappa^{2}}(\kappa')^{2}+3\kappa^{2}-\lambda\big)\kappa^{(l)}+f(\kappa,\cdots,\kappa^{(l-1)}).
\end{align*}
This shows that $\kappa^{(l)}$ is bounded on any finite intervals.
\end{proof}
Therefore we have
\begin{theorem}
The  flow \eqref{fl2} exists in time interval $[0,\infty)$.
\end{theorem}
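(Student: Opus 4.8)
The plan is a standard continuation argument: combine the short-time existence theorem with the a priori estimates established above. First I would let $[0,T^{\ast})$ be the maximal interval of existence furnished by the short-time existence theorem for the reduced system, and argue by contradiction, assuming $T^{\ast}<\infty$.

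On $[0,T^{\ast})$ the geometric estimate $\kappa^{\ast}(t)\le L(0)/A(0)$, together with the integral estimate and the pointwise estimate, gives a uniform upper bound $\kappa\le\tilde C$, while \eqref{kxj} gives the uniform lower bound $\kappa\ge\kappa_{\min}(0)e^{-\mu T^{\ast}}>0$; by Remark \ref{lajie}, $0<\lambda(t)\le\pi/A(0)$. Feeding these bounds into the chain of lemmas of Section 4 yields uniform bounds on $\kappa'$, on $\int_{0}^{2\pi}(\kappa'')^{4}\,d\theta$ and $\int_{0}^{2\pi}(\kappa''')^{2}\,d\theta$, hence on $\kappa''$, and then on $\kappa'''$ and, inductively, on every $\kappa^{(l)}$, all uniformly on $S^{1}\times[0,T^{\ast})$. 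Differentiating the evolution equation \eqref{kt} and using these spatial bounds also bounds $\partial_{t}\kappa$ and each of its $\theta$-derivatives, so every $\kappa^{(l)}$ is uniformly continuous in $t$ on $[0,T^{\ast})$.

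Consequently, by Arzel\`a--Ascoli, $\kappa(\cdot,t)$ converges as $t\uparrow T^{\ast}$ in $C^{\infty}(S^{1})$ to a function $\kappa(\cdot,T^{\ast})\in C^{\infty}(S^{1})$ with $\kappa_{\min}(0)e^{-\mu T^{\ast}}\le\kappa(\cdot,T^{\ast})\le\tilde C$; moreover $L(t)$ is monotone and bounded by Lemma \ref{llaa} and \eqref{la}, so $L(T^{\ast}):=\lim_{t\uparrow T^{\ast}}L(t)$ exists and equals $\int_{0}^{2\pi}d\theta/\kappa(\theta,T^{\ast})$. The closing-condition lemma guarantees $\int_{0}^{2\pi}e^{i\theta}/\kappa(\theta,T^{\ast})\,d\theta=0$, so $(\kappa(\cdot,T^{\ast}),L(T^{\ast}))$ is an admissible initial datum for the reduced system, with $\kappa(\cdot,T^{\ast})\in C^{2+\sigma}(S^{1})$ bounded above and below by positive constants.

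Finally I would apply the short-time existence theorem with $\kappa_{0}$ replaced by $\kappa(\cdot,T^{\ast})$: it produces a solution on $[T^{\ast},T^{\ast}+T_{0}]$ for some $T_{0}>0$ which, by uniqueness, glues with the original solution to extend it past $T^{\ast}$, contradicting the maximality of $T^{\ast}$. Hence $T^{\ast}=\infty$, and by the equivalence of \eqref{fl2} with the reduced system (and of \eqref{fl2} with \eqref{fl}), the flow \eqref{fl2} exists on $[0,\infty)$. The one point requiring care --- and the main obstacle --- is the passage to the limit at $T^{\ast}$: one must check that all the a priori estimates are genuinely independent of $t$ up to $T^{\ast}$ (they are, since every constant depends only on the initial data, on $T^{\ast}$, and on the fixed upper/lower bounds for $\kappa$ and $\lambda$), so that the limiting curve is again smooth and strictly convex and the short-time theorem can legitimately be reapplied at time $T^{\ast}$.
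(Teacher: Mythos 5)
Your proposal is correct and is essentially the argument the paper intends: the paper simply states the theorem as a consequence of the preceding chain of lemmas (``Therefore we have\dots'') without writing out the continuation step, while you supply the standard details (maximal interval, uniform bounds on $\kappa$ and all $\kappa^{(l)}$ on $[0,T^{\ast})$, passage to the limit at $T^{\ast}$, closing condition, reapplication of short-time existence plus uniqueness, contradiction). The only point worth flagging is that many of the derivative bounds in Section~4 grow exponentially in $t$, so they depend on $T^{\ast}$; you correctly note this, and since $T^{\ast}$ is assumed finite this is harmless for the continuation argument.
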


\section{$C^{\infty}$ Convergence}
Theorem \ref{hasc} tells us that the evolving curve converges to a circle in the Hausdorff sense($C^{0}$ Convergence). Mimicking the proofs of Secs.5.1-5.6 of \cite{Gage-Hamilton}, we can show that $\frac{\kappa_{\min}(t)}{\kappa_{\max}(t)}\to1$ as $t\to\infty$. This can be considered as the $C^{2}$ convergence.
In this section, we will use the method in \cite{xing} to prove that the evolving curve converges to a circle in the $C^{\infty}$ sense and finish the proof of Main Theorem.

\begin{lemma}\label{cru}
Under the flow, the quantity $\int^{2\pi}_{0}(\kappa')^{2}d\theta$ is uniformly bounded.
\end{lemma}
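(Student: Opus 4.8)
The plan is to produce a differential inequality for $y(t):=\int_{0}^{2\pi}(\kappa')^{2}\,d\theta$ which becomes dissipative once the curve is close to the limiting circle, and to combine it with the long-time existence already established. By Sections~2--4 the flow exists for all $t\ge0$, so $y$ is finite on every compact interval $[0,T]$; the point is to make the bound $T$-independent. First I would collect the uniform bounds used throughout: from the finite-time regularity of Section~4 together with $\kappa(\cdot,t)\to R^{-1}$ (uniformly, since $\kappa_{\min}/\kappa_{\max}\to1$) and $\lambda(t)\to R^{-2}$ from the remark after Theorem~\ref{hasc}, one gets $0<c_{1}\le\kappa(\theta,t)\le c_{2}<\infty$ for all $(\theta,t)$, while $0<\lambda(0)\le\lambda(t)\le\pi/A(0)$ by the monotonicity of $\lambda$ and Remark~\ref{lajie}. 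I also record $\int_{0}^{2\pi}\kappa'\,d\theta=0$ and the closing condition $\int_{0}^{2\pi}e^{i\theta}/\kappa\,d\theta=0$.

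The second ingredient is a global integrability bound. From the formula for $d\lambda/dt$ in the lemma preceding Theorem~\ref{hasc},
\[
\frac{d\lambda}{dt}=\frac{\lambda^{2}}{\pi}\int_{0}^{2\pi}\frac{\kappa_{\theta}^{2}}{\kappa^{2}}\Big(1+\frac{\lambda}{\kappa^{2}}\Big)d\theta\ \ge\ \frac{\lambda(0)^{2}}{\pi c_{2}^{2}}\,y(t),
\]
so integrating in time and using that $\lambda$ is nondecreasing and bounded above yields $\int_{0}^{\infty}y(t)\,dt<\infty$.

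Next I would differentiate $y$ using \eqref{kt} and integrate by parts to get
\[
\frac{dy}{dt}=-2\int_{0}^{2\pi}(\kappa^{2}+\lambda)(\kappa'')^{2}\,d\theta+\frac{4\lambda}{3}\int_{0}^{2\pi}\frac{(\kappa')^{4}}{\kappa^{2}}\,d\theta+2\int_{0}^{2\pi}(3\kappa^{2}-\lambda)(\kappa')^{2}\,d\theta.
\]
The quartic term is handled by the $1$-d Gagliardo--Nirenberg inequality $\|\kappa'\|_{L^{4}}^{4}\le C\|\kappa'\|_{L^{2}}^{3}\|\kappa''\|_{L^{2}}$ and Young's inequality: using $c_{1}\le\kappa$ and $\lambda\le\pi/A(0)$ it is absorbed into half of $-2\int(\kappa^{2}+\lambda)(\kappa'')^{2}$ at the cost of a term $\le C_{0}y^{3}$. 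For the leftover $-\int(\kappa^{2}+\lambda)(\kappa'')^{2}$ I would use a Poincar\'e inequality, and here lies the crux: the closing condition forces the first Fourier coefficient of $\kappa$ to be of order $\|\kappa-R^{-1}\|_{C^{0}}\,\|\kappa'\|_{L^{2}}$, hence for $t$ large the sharp Poincar\'e constant for functions essentially carried by frequencies $\ge2$ upgrades Wirtinger to $\int_{0}^{2\pi}(\kappa'')^{2}\,d\theta\ge(4-\delta)\int_{0}^{2\pi}(\kappa')^{2}\,d\theta$. Since $\kappa^{2},\lambda\to R^{-2}$, the resulting coefficient of $\int(\kappa')^{2}\,d\theta$ tends to $-4R^{-2}+O(\delta)$; fixing $\delta$ small gives $T_{0}\ge0$ and $c_{0}>0$ with $\frac{dy}{dt}\le-c_{0}y+C_{0}y^{3}$ for $t\ge T_{0}$.

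To conclude: $\int_{T_{0}}^{\infty}y\,dt<\infty$ forces $\liminf_{t\to\infty}y(t)=0$, so $y(t_{1})^{2}<c_{0}/(2C_{0})$ for some $t_{1}\ge T_{0}$; whenever $y=(c_{0}/(2C_{0}))^{1/2}$ one has $\frac{dy}{dt}\le y(C_{0}y^{2}-c_{0})\le-\tfrac{c_{0}}{2}y<0$, so $y$ never exceeds that value for $t\ge t_{1}$ (indeed it decays exponentially), while on $[0,t_{1}]$ it is bounded by finite-time regularity. Hence $\int_{0}^{2\pi}(\kappa')^{2}\,d\theta$ is uniformly bounded on $[0,\infty)$. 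I expect the main obstacle to be securing the strictly negative coefficient in the differential inequality for large $t$: plain Wirtinger only yields a coefficient tending to $0$ with ambiguous sign, and it is the closing condition --- suppressing the dangerous first Fourier mode --- that raises the effective Poincar\'e constant to $\approx4$ and closes the estimate; a lesser but necessary point is pinning down the uniform two-sided bounds on $\kappa$ from the finite-time estimates of Section~4 and the convergence already proved.
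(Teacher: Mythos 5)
Your proof is correct, but it takes a genuinely different route from the paper's, and is considerably more involved. The paper's own proof is a two-line deduction: it invokes the inequality \eqref{kkk} established in Lemma~\ref{k2tj},
\[
\int^{2\pi}_{0}\Big(\kappa-\frac{\lambda}{\kappa}\Big)^{2}_{\theta}d\theta\le\int^{2\pi}_{0}\Big(\kappa-\frac{\lambda}{\kappa}\Big)^{2}d\theta+D,
\]
notes that $(\kappa-\lambda/\kappa)_{\theta}=\kappa'(1+\lambda/\kappa^{2})$ so the left-hand side dominates $\int(\kappa')^{2}d\theta$, and bounds the right-hand side using the uniform two-sided bounds on $\kappa$ and $\lambda$; the closing condition, Gagliardo--Nirenberg, and the improved Poincar\'e constant play no role there. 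Your argument instead obtains $\int_{0}^{\infty}\int(\kappa')^{2}\,dt<\infty$ from the $d\lambda/dt$ formula, derives a differential inequality $y'\le-c_{0}y+C_{0}y^{3}$ for $y=\int(\kappa')^{2}$, and closes it via a trapping argument; the observation that plain Wirtinger gives an exactly cancelling coefficient in the limit, and that the closing condition $\int e^{i\theta}/\kappa\,d\theta=0$ suppresses the $n=1$ Fourier mode enough to lift the effective Poincar\'e constant toward $4$, is genuine and the estimate would otherwise fail. What the paper buys is brevity; what your route buys is, first, the stronger conclusion that $y$ in fact decays exponentially, and second, independence from a delicate point in the paper's quotation of \eqref{kkk}: the constant $D$ there is built from $\Omega=g(t_{0}/2)$ with $g(t)=\frac{2\pi}{\lambda(0)}\int_{0}^{t}\lambda_{\tau}^{2}d\tau$, and Lemma~\ref{k2tj} asserts the bound only on $[0,t_{0}/2]$, so using it for all $t$ implicitly requires a global control of $\int_{0}^{\infty}\lambda_{t}^{2}\,dt$ that the paper does not spell out; your integrability step is exactly the kind of a priori estimate that would be needed to close that loop, so in this respect your proof is the more self-contained. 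Both arguments, yours and the paper's, lean on the uniform two-sided bound on $\kappa$ coming from the Hausdorff convergence plus the $C^{2}$ convergence $\kappa_{\min}/\kappa_{\max}\to1$ asserted via Gage--Hamilton at the start of Section~5; that shared prerequisite is not a defect of your approach.
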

\begin{proof}
By lemma \ref{k2tj}, we have
\begin{equation*}
\int^{2\pi}_{0}\big(\kappa-\frac{\lambda}{\kappa}\big)^{2}_{\theta}d\theta\le
\int^{2\pi}_{0}\big(\kappa-\frac{\lambda}{\kappa}\big)^{2}d\theta+D.
\end{equation*}
Since $\kappa$ and $\lambda$ are both uniformly bounded, there exists a positive constant $\bar{D}$ such that
$$\int^{2\pi}_{0}(\kappa')^{2}d\theta<\int^{2\pi}_{0}\big(1+\frac{\lambda}{\kappa^{2}}\big)(\kappa')^{2}d\theta
=\int^{2\pi}_{0}\big(\kappa-\frac{\lambda}{\kappa}\big)^{2}_{\theta}d\theta\le\bar{D}.$$
The proof is done.
\end{proof}
The estimate of $\int^{2\pi}_{0}(\kappa')^{2}d\theta$ is a crucial step in proving convergence of the flow \eqref{fl2}. By Lemma \ref{cru} and the Cauchy-Schwarz inequality, for any $\theta_{1},\theta_{2}\in[0,2\pi]$,
$$\vert\kappa(\theta_{1},t)-\kappa(\theta_{2},t)\vert=\bigg\vert\int^{\theta_{2}}_{\theta_{1}}\kappa'd\theta\bigg\vert\le\sqrt{
\vert\theta_{1}-\theta_{2}\vert}\sqrt{\int^{2\pi}_{0}(\kappa')^{2}d\theta}\le\sqrt{\bar{D}}\vert\theta_{1}-\theta_{2}\vert
^{\frac{1}{2}},$$
which implies $\kappa(\theta,t)$ is equicontinuous. Since $\kappa$ and $\int^{2\pi}_{0}(\kappa')^{2}d\theta$ are both uniformly bounded, Ascoli-Arzela Theorem tells us that there is a convergent subsequence, denoted by $\kappa(\theta,t_{i})$, as $t_{i}$ tends to infinity. So we can obtain the following $C^{\infty}$ convergence.
\begin{theorem}
Under the flow \eqref{fl2}, we have
$$\lim\limits_{t\to\infty}\big\Vert\kappa(\theta,t)-\sqrt{\frac{\pi}{A(0)}}\big\Vert_{C^{n}(S^{1})}=0,\quad,n=0,1,2\cdots.$$
\end{theorem}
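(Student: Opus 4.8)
The plan is to establish $C^{0}$ convergence of the curvature first, then upgrade the finite-time derivative estimates of Section~4 to bounds that are uniform in $t$, and finally pass from $C^{0}$ to $C^{n}$ by a compactness argument.

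\emph{Step 1 ($C^{0}$ convergence and pinching).} Since $L$ is nonincreasing and bounded below by $\sqrt{4\pi A(0)}$, the limit $L_{\infty}=\lim_{t\to\infty}L(t)$ exists, and by Theorem~\ref{hasc} (exponential decay of $L^{2}-4\pi A$ together with $A\equiv A(0)$) one has $L_{\infty}^{2}=4\pi A(0)$, i.e. $L_{\infty}=2\sqrt{\pi A(0)}$. In the angle parametrization $\int_{0}^{2\pi}\kappa^{-1}\,d\theta=L(t)$, so $\kappa_{\min}(t)\le 2\pi/L(t)\le\kappa_{\max}(t)$; combined with $\kappa_{\min}(t)/\kappa_{\max}(t)\to1$ (the $C^{2}$ convergence recorded at the beginning of this section) this gives $\kappa_{\max}(t)-\kappa_{\min}(t)\to0$, hence $\kappa(\theta,t)\to 2\pi/L_{\infty}=\sqrt{\pi/A(0)}$ uniformly in $\theta$, and also $\lambda(t)\to\pi/A(0)$. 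In particular there are constants $0<c_{0}\le c_{1}<\infty$ with $c_{0}\le\kappa(\theta,t)\le c_{1}$ on $S^{1}\times[0,\infty)$ (the upper bound and the lower bound for large $t$ from uniform convergence, the lower bound on any fixed initial interval from continuity and positivity), and by Remark~\ref{lajie} $\lambda(t)$ is uniformly bounded.

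\emph{Step 2 (uniform-in-time bounds on all derivatives --- the crux).} The estimates of Section~4 give only at-most-exponential growth, which is not enough for $C^{\infty}$ convergence. I would show that $\int_{0}^{2\pi}(\kappa^{(n)})^{2}\,d\theta$ is bounded uniformly in $t$ for every $n\ge1$, by induction on $n$; the case $n=1$ is Lemma~\ref{cru}. Granting the bound up to order $n$, one derives a dissipation inequality $\frac{d}{dt}\int_{0}^{2\pi}(\kappa^{(n)})^{2}\,d\theta\le -c\int_{0}^{2\pi}(\kappa^{(n+1)})^{2}\,d\theta+C$ with $c>0$, in which the genuinely negative leading term $-2\int(\kappa^{2}+\lambda)(\kappa^{(n+1)})^{2}\,d\theta\le-2c_{0}^{2}\int(\kappa^{(n+1)})^{2}\,d\theta$ dominates once the remaining terms are estimated by Young's inequality and the Gagliardo--Nirenberg interpolation inequalities on $S^{1}$ applied to the mean-zero functions $\kappa^{(k)}$, $k\le n$. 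Integrating over $[t,t+1]$ bounds $\int_{t}^{t+1}\!\int_{0}^{2\pi}(\kappa^{(n+1)})^{2}\,d\theta\,ds$ uniformly in $t$; feeding this, together with the Section~4-type growth estimate $\frac{d}{dt}\int_{0}^{2\pi}(\kappa^{(n+1)})^{2}\,d\theta\le c'\int_{0}^{2\pi}(\kappa^{(n+1)})^{2}\,d\theta+c''$, into the uniform Gr\"onwall lemma closes the induction. The Sobolev embedding on $S^{1}$ then upgrades this to $\|\kappa(\cdot,t)\|_{C^{m}(S^{1})}\le C_{m}$ for every $m$ and all $t$. I expect exactly this passage from finite-time to uniform-in-time control to be the main obstacle: it forces one to exploit the dissipative structure of \eqref{kt}, including the nonlocal coefficient $\lambda(t)$ and the quadratic gradient term $\tfrac{2\lambda}{\kappa}\kappa_{\theta}^{2}$, rather than only the maximum-principle and Gr\"onwall arguments of Section~4; equivalently one may invoke interior parabolic (Schauder) estimates for \eqref{kt}, which is uniformly parabolic once $c_{0}\le\kappa\le c_{1}$.

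\emph{Step 3 (from $C^{0}$ to $C^{n}$).} Fix $n$. By Step~2 the family $\{\kappa(\cdot,t):t\ge0\}$ is bounded in $C^{n+1}(S^{1})$, hence precompact in $C^{n}(S^{1})$ by Arzel\`a--Ascoli. For any sequence $t_{i}\to\infty$, a subsequence satisfies $\kappa(\cdot,t_{i})\to\psi$ in $C^{n}(S^{1})$, and by Step~1 necessarily $\psi\equiv\sqrt{\pi/A(0)}$; since every subsequence admits a further subsequence with this same limit, $\kappa(\cdot,t)\to\sqrt{\pi/A(0)}$ in $C^{n}(S^{1})$ as $t\to\infty$. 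As $n$ is arbitrary, $\lim_{t\to\infty}\big\Vert\kappa(\theta,t)-\sqrt{\pi/A(0)}\big\Vert_{C^{n}(S^{1})}=0$ for all $n\ge0$, which together with Theorem~\ref{hasc} completes the proof of Theorem~\ref{them}. (Alternatively one can make Step~3 quantitative: from $\|\kappa-\bar\kappa\|_{2}\le\sqrt{2\pi}\,(\kappa_{\max}-\kappa_{\min})\to0$ and $\|\kappa^{(n)}\|_{2}^{2}\le\|\kappa^{(n-1)}\|_{2}\|\kappa^{(n+1)}\|_{2}$ one obtains $\|\kappa^{(n)}\|_{2}\to0$ for every $n\ge1$ by induction, and the Sobolev embedding then yields $C^{n}$ decay with an explicit rate.)
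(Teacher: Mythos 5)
Your proposal follows the paper's route---uniform $L^{2}$ bounds on the $\theta$-derivatives of $\kappa$, Arzel\`a--Ascoli, and identification of the limit---but it is considerably more complete than what the paper actually records. The paper proves only the $n=1$ uniform bound (Lemma~\ref{cru}), uses it to extract a $C^{0}$-convergent subsequence $\kappa(\cdot,t_{i})$ via Arzel\`a--Ascoli, and then simply asserts the $C^{\infty}$ statement, leaving the higher-order uniform-in-time bounds, the identification of the limit, and the passage from subsequential to full convergence to the reader by reference to \cite{xing}. Your Step~1 cleanly pins down the limit $\sqrt{\pi/A(0)}$ from $L(t)\to 2\sqrt{\pi A(0)}$ together with the pinching $\kappa_{\min}/\kappa_{\max}\to 1$ and the elementary observation $\kappa_{\min}\le 2\pi/L\le\kappa_{\max}$, which also furnishes the two-sided bounds $c_{0}\le\kappa\le c_{1}$ needed downstream. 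Your Step~2 correctly identifies the genuine missing ingredient: the Section~4 estimates control $\int(\kappa^{(n)})^{2}d\theta$ only up to at-most-exponential growth on finite intervals, so for convergence one must instead exploit the dissipative term $-\int(\kappa^{2}+\lambda)(\kappa^{(n+1)})^{2}d\theta$ and close a uniform Gr\"onwall induction (or, equivalently, invoke parabolic Schauder estimates once uniform parabolicity is secured). Your quantitative variant of Step~3, using $\|\kappa^{(n)}\|_{2}^{2}\le\|\kappa^{(n-1)}\|_{2}\,\|\kappa^{(n+1)}\|_{2}$ to propagate $\|\kappa-\bar\kappa\|_{2}\to 0$ up the derivative ladder, is a nice way to avoid the subsequence-extraction step altogether. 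In short, your plan is sound and supplies exactly the details the paper's sketch omits.
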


\textbf{Acknowledgements}
The authors would like to thank Chengyang Yi for useful discussions relating to this paper. This work is partially supported by Science and Technology Commission
of Shanghai Municipality (No. 22DZ2229014). The research is supported by Shanghai Key Laboratory of PMMP.

\textbf{Data availability}
Data sharing  not applicable to this article as no datasets were generated or analysed during the current study.

\textbf{Conflict of interest}
The authors have no conflicts of interest to declare that are relevant to the content of this article.


\end{document}